\newtheorem{thm}{Theorem}
\newtheorem{lemma}[thm]{Lemma}
\newtheorem{cor}[thm]{Corollary}
\newtheorem{prop}[thm]{Proposition}
\newtheorem*{propi}{Proposition}
\newtheorem*{thmi}{Theorem}
\newcommand*\pFqskip{8mu}
\newcommand*\pFq{\begingroup
        \catcode`\,\active
        \def ,{\mskip\pFqskip\relax}%
        \dopFq
}
\def\dopFq#1#2{%
        {}_{#1}F_{#2}
        \endgroup
}
\theoremstyle{remark} 
\newtheorem{remark}[]{Remark}
\newtheorem{example}[]{Example}
\newcommand{\ra}{\rightarrow}
\newcommand{\RR}{\mathbb{R}}
\newcommand{\RP}{\mathbb{R}\textrm{P}}
\newcommand{\EE}{\mathbb{E}}
\newcommand{\PP}{\mathbb{P}}
\newcommand{\CC}{\mathbb{C}}
\newcommand{\e}{\varepsilon}
\newcommand{\f}{f_{\beta,n}}
\newcommand{\Mel}{\mathcal{M}_{n-1}^+}
\newcommand{\Mellin}{\mathcal{M}_{n}^+}
\newcommand{\Span}{\text{span}}
\newcommand{\op}{\textrm{op}}
\newcommand{\N}{\mathcal{N}}
\newcommand{\ii}{\textrm{i}^+}
\title[Gap probabilities and applications]{Gap probabilities and applications to geometry and random topology}
\author{Antonio Lerario}
\author{Erik Lundberg}
\begin{document}

\maketitle
\begin{abstract}We give an exact formula for the value of the derivative \emph{at zero} of the gap probability $f_{\beta, n}$ in finite Gaussian $\beta$-ensembles ($\beta=1,2,4$). As $n$ goes to infinity our computation provides:
$$f_{\beta, n}'(0)\sim -\frac{2\sqrt{2}}{\pi}n^{1/2}.$$

As a first application of the above computation, we consider the set $\Sigma_{\beta, n}$ of $\beta$-Hermitian matrices with Frobenius norm one and determinant zero 
(i.e. the set of $n\times n$ norm-one, singular, \emph{symmetric} matrices for $\beta=1$, \emph{hermitian} for $\beta=2$ and \emph{quaternionic hermitian} for $\beta=4$). We give an exact formula for its intrinsic volume; as $n$ goes to infinity this formula becomes:
$$|\Sigma_{\beta, n}|\sim |S^{N_\beta-2}|\cdot \frac{2}{\sqrt{\pi}}n^{1/2}$$
(here $N_\beta$ is the real dimension of the vector space of $\beta$-Hermitian matrices).

As a second application we consider the problem of computing Betti numbers of an intersection of $k$ random Kostlan quadrics in $\RP^n.$ We show that for each $i\geq 0$:
$$\EE b_i(X)=1+O(n^{-M})\quad \textrm{for all $M>0$}$$
In other words, each \emph{fixed} Betti number of $X$ is asymptotically expected to be one; in fact as long as $i=i(n)$ is sufficiently bounded away from $n/2$ the above rate of convergence is uniform.
In the case $k=2$ the the sum of \emph{all} Betti numbers was recently shown \cite{Lerario2012} to equal $n+o(n)$. 
Here we sharpen this asymptotic, showing that:
$$\sum_{j=0}^n\EE b_j(X)=n + \frac{2}{\sqrt{\pi}}n^{1/2} + O(n^c) \quad \textrm{for any $c>0$}$$
(the term $\frac{2}{\sqrt{\pi}}n^{1/2}\sim -\sqrt{\frac{\pi}{2}}f'_{1,n}(0)$ comes from contributions of \emph{middle} Betti numbers).
\end{abstract}

\section{Introduction}

In this paper we study a special statistic of Random Matrix Theory, the so called \emph{gap probability}, and reveal some surprising connections it has with natural problems coming from geometry and random topology. 

Specifically we consider the classical \emph{finite} $\beta$-ensembles $G_{\beta,n}$ of random matrices ($n$ is the size of the matrix) and the function:
 $$f_{\beta, n}(\e)=\textrm{probability that a matrix from $G_{\beta, n}$ has no eigenvalues in $(-\e, \e).$}$$
Here $G_{1, n}=\textrm{GOE}(n)$ (the \emph{orthogonal} ensemble),  $G_{2, n}=\textrm{GUE}(n)$ (the \emph{unitary} ensemble) and  $G_{4, n}=\textrm{GSE}(n)$ (the \emph{symplectic} ensemble); these are ensembles of random Hermitian matrices with Gaussian entries (see \cite{Fyodorov, Mehta, Tao} for more details and properties of statistics of these ensembles). 
For example the probability distribution on $\textrm{Sym}(n, \RR)$ giving rise to the ensemble $G_{1, n}$ is given by defining for every open set $U\subset \textrm{Sym}(n, \RR)$:
$$\textrm{probability that $Q$ belongs to $U$}=\frac{1}{2^{n/2}\pi^{n(n+1)/4}}\int_{U}e^{-\frac{1}{2}\textrm{tr}(Q^2)}dQ.$$

With slight abuse of notation, we will still denote by $G_{\beta, n}$ the Euclidean space of such Hermitian matrices endowed with the \emph{Frobenius} norm; in particular $G_{1,n}$ is the set of real symmetric, $G_{2,n}$ the set of Hermitian and $G_{4,n}$ the set of quaternionic Hermitian matrices. We denote by $N_\beta=n+\frac{1}{2}n(n-1)\beta$ the dimension of $G_{\beta, n}$ as a real vector space. 
The key ingredient for our applications will be the explicit computation of the derivative at zero of the gap probability; deferring the exact result for later, we give for the moment only an asymptotic formula, in order to start discussing how it can be used for applications.

\begin{propi}[An asymptotic]\label{gapasympt}The following asymptotic holds for the derivative at zero of the gap probability:
$$f'_{\beta, n}(0)\sim -\frac{2\sqrt{2}}{\pi}n^{1/2}\quad \beta=1,2,4.$$
\end{propi}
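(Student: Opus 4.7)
The plan is to reduce $f'_{\beta,n}(0)$ to the one-point correlation function of $G_{\beta,n}$ at the origin, and then extract the $n^{1/2}$ leading term via orthogonal (resp.\ skew-orthogonal) polynomial formulas or, equivalently, via the Wigner semicircle law.

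For the reduction, let $R^{(1)}_{\beta,n}(\lambda)$ denote the expected eigenvalue density and $N_\e = \#\{i:|\lambda_i|<\e\}$. From
$$1 - f_{\beta,n}(\e) \;=\; \PP(N_\e\geq 1) \;=\; \EE[N_\e] \;-\; \sum_{k\geq 2}(k-1)\,\PP(N_\e=k),$$
the first term equals $\int_{-\e}^{\e} R^{(1)}_{\beta,n}(\lambda)\,d\lambda = 2\e\,R^{(1)}_{\beta,n}(0) + O(\e^3)$, while the correction is controlled by $\PP(N_\e\geq 2)$, which is $O(\e^{2+\beta})$: the $\beta$-level repulsion forces the two-point correlation to vanish like $|\lambda-\mu|^\beta$ on the diagonal, so two distinct eigenvalues are very unlikely to lie in the same $O(\e)$ window. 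Dividing by $\e$ and letting $\e\to 0$ yields the key identity
$$f'_{\beta,n}(0) \;=\; -2\,R^{(1)}_{\beta,n}(0).$$

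It then remains to understand the density at the origin. For $\beta=2$ this is the Christoffel-Darboux kernel on the diagonal, $R^{(1)}_{2,n}(0) = \sum_{k=0}^{n-1}\phi_k(0)^2$, where the $\phi_k$ are the $L^2$-orthonormal Hermite functions for the weight $e^{-x^2/2}$. Only even indices $k=2m$ contribute, with $\phi_{2m}(0)^2 = \tfrac{1}{\sqrt{2\pi}}\binom{2m}{m}4^{-m}\sim 1/(\pi\sqrt{2m})$ by Stirling, and summing over $m\lesssim n/2$ produces the $\sqrt{n}$ leading behavior. For $\beta=1,4$ the analogous formula comes from the skew-orthogonal polynomial (Pfaffian kernel) formalism, producing a finite sum over values of Hermite polynomials at the origin together with, for $\beta=1$, an additional sign-function convolution correction coming from Dyson's $G$. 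Alternatively, one bypasses the exact formula altogether and appeals to Wigner's semicircle law: under the normalization $dP\propto e^{-\textrm{tr}(Q^2)/2}dQ$, the limiting spectral density at the origin scales like $n^{1/2}$, and the precise constant $\frac{\sqrt{2}}{\pi}$ is read off from the support of the equilibrium measure.

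The principal obstacle will be the exact evaluation of $R^{(1)}_{\beta,n}(0)$ for $\beta=1$ and $\beta=4$. Unlike the unitary case, where a single sum over Hermite squares suffices, the Pfaffian structure of these ensembles splits the one-point correlation into two distinct pieces (a ``kernel'' contribution plus a correction), and one must track both through a Stirling-type asymptotic expansion and verify that they combine to the universal leading order $\sqrt{2n}/\pi$. This is precisely the content of the exact formula promised in the introduction; once that formula is in hand, the claim $f'_{\beta,n}(0)\sim -\frac{2\sqrt{2}}{\pi}n^{1/2}$ follows from the analysis sketched above.
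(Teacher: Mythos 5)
Your reduction $f'_{\beta,n}(0)=-2R^{(1)}_{\beta,n}(0)$ is correct, and it is in fact exactly what the paper proves as an intermediate step: equation \eqref{eq:faces} reads $\lim_{\e\to 0}f'_{\beta,n}(\e)=-2n\int_{\RR^{n-1}}F_{\beta,n}(\hat\lambda,0)\,d\hat\lambda$, which is $-2R^{(1)}_{\beta,n}(0)$. The paper reaches this by differentiating the gap-probability integral in $\e$ and letting $\e\to 0$, rather than by the inclusion--exclusion on $\EE N_\e$ that you use (your bound on the correction term should be through $\EE\binom{N_\e}{2}=\int\!\!\int_{(-\e,\e)^2}R^{(2)}$ rather than $\PP(N_\e\ge 2)$, but the $O(\e^{2+\beta})$ estimate is still right). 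From there the routes diverge sharply. Instead of invoking the Christoffel--Darboux/Pfaffian machinery, the paper factors a Vandermonde contribution out of $F_{\beta,n}(\hat\lambda,0)$ to obtain $R^{(1)}_{\beta,n}(0)=n\tfrac{C_\beta(n)}{C_\beta(n-1)}\,\EE_{Q\in G_{\beta,n-1}}|\det Q|^\beta$ (Theorem~\ref{lemma:joint}), i.e.\ it converts the one-point function at the origin in dimension $n$ into a determinant moment in dimension $n-1$. This quantity is a Mellin transform $\Mel(\beta,\beta+1)$ for which Mehta supplies closed forms uniformly in $\beta$, and Stirling then gives the $n^{1/2}$ asymptotic with the constant $\tfrac{2\sqrt 2}{\pi}$. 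That route circumvents entirely the split kernel structure you flag as the principal obstacle for $\beta=1,4$.

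Two concrete gaps in your proposal. First, a normalization mismatch: with the paper's joint density \eqref{eigendensity}, $\exp(-\tfrac{\beta}{2}\sum\lambda_j^2)\prod_{j<k}|\lambda_j-\lambda_k|^\beta$, the $\beta=2$ weight is $e^{-x^2}$, not $e^{-x^2/2}$; your version gives $R^{(1)}_{2,n}(0)\sim\sqrt n/\pi$ and hence $f'_{2,n}(0)\sim-2\sqrt n/\pi$, off by $\sqrt 2$ from the target. Second, the appeal to the semicircle law is not on its own sufficient: semicircle convergence is a weak (averaged) statement, whereas you need the pointwise asymptotic $n^{-1/2}R^{(1)}_{\beta,n}(0)\to\sqrt 2/\pi$, i.e.\ a local law at the origin. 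That is true, but it must be argued (either via the exact kernel expressions you mention --- which you have not executed for $\beta=1,4$, where the Pfaffian correction terms must be tracked through Stirling --- or via a separate rigidity estimate). As written the proposal is a sound plan for $\beta=2$ up to the constant, but the $\beta=1,4$ cases are genuinely unfinished.
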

Although we only perform the asymptotic analysis for $\beta=1,2,4$, our exact formula \eqref{gapexct} (discussed later on in the Introduction) holds for arbitrary $\beta>0.$

\begin{center}
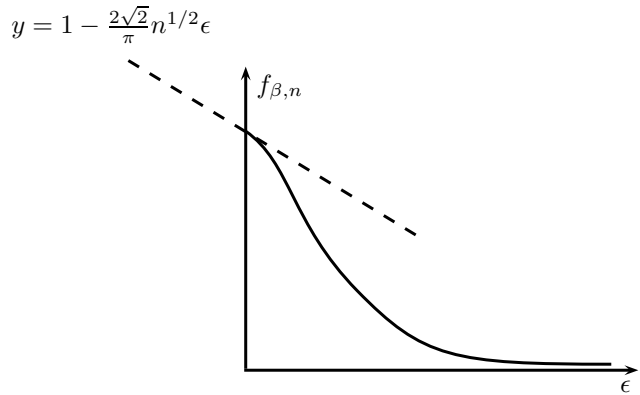
\begin{figure}\label{gapfigure}
\scalebox{1} 
{
\begin{pspicture}(0,-2.6176953)(10.82291,2.6176953)
\psbezier[linewidth=0.04](4.9810157,0.9992969)(5.5810156,0.5792969)(5.5810156,-0.21884882)(6.4610157,-1.1207031)(7.341016,-2.0225575)(7.7210155,-2.1007032)(9.841016,-2.1007032)
\psline[linewidth=0.04cm,linestyle=dashed,dash=0.16cm 0.16cm](7.2410154,-0.38070312)(3.4210157,1.9392968)
\usefont{T1}{ptm}{m}{n}
\rput(5.432471,1.5842968){$f_{\beta, n}$}
\usefont{T1}{ptm}{m}{n}
\rput(10.032471,-2.395703){$\epsilon$}
\usefont{T1}{ptm}{m}{n}
\rput(3.1924708,2.4242969){$y=1-\frac{2\sqrt{2}}{\pi}n^{1/2}\epsilon$}
\psline[linewidth=0.04cm,arrowsize=0.05291667cm 2.0,arrowlength=1.4,arrowinset=0.4]{->}(4.9810157,-2.1607032)(4.9810157,1.8592969)
\psline[linewidth=0.04cm,arrowsize=0.05291667cm 2.0,arrowlength=1.4,arrowinset=0.4]{->}(4.9610157,-2.1807032)(10.201015,-2.1807032)
\end{pspicture} 
}

\caption{A picture of the gap probability}
\end{figure}
\end{center}

\subsection*{Volume of singular Hermitian matrices}As a first application, let us consider the above space of Hermitian matrices and the set $\Sigma_{\beta, n}$ of norm-one singular matrices:
$$\Sigma_{\beta, n}=\{Q\in G_{\beta, n}\,\,\textrm{s.t.}\,\, \|Q\|^2=1 \,\,\textrm{and}\, \,\det(Q)=0\}.$$
The definition of determinant for  the case  $\beta=4$ of quaternionic matrices requires some care, see \cite{Mehta} for details.  

For example if $\beta=1$ and $n=2$ we can write $Q=\left(\begin{smallmatrix}x&y\\y&z\end{smallmatrix}\right)$ and $\Sigma_{1,2}$ is given in $G_{1,n}\simeq\RR^3$ by the two equations $xz-y^2=0$ (the determinant of $Q$ vanishes) and $x^2+2y^2+z^2=1$ (the Frobenius norm of $Q$ is one).

In general $\Sigma_{\beta, n}$ is a codimension-two algebraic subset of $G_{\beta, n}$; it is not smooth for $n\geq3$, but its singularities have codimension three in it. The topology of $\Sigma_{1,n}$ has been studied by the first author in \cite{Lerario2012}: it turns out that it is stable homotopy equivalent to a disjoint union of Grassmannians and the sum of its Betti numbers (the total number of ``holes'' it has) is $2^n$. 

Here we consider \emph{metric} properties of $\Sigma_{\beta, n}$ coming from its inclusion into the Euclidean space of Hermitian matrices with the Frobenius norm. Specifically we are interested in the following question:
\begin{equation}\label{q1}\textrm{``What is the intrinsic volume of $\Sigma_{\beta, n}$?''}\end{equation}

The corresponding problem without requiring the self-adjoint property, was studied by A. Edelman, E. Kostlan and M. Shub in \cite{EdelmanKostlan95, EdelmanKostlanShub}; in this case the set $M$ of singular $n\times n$ real matrices of norm one has volume:
\begin{equation}\label{volumegeneral}|M|=\frac{2\pi^{n^2/2}\Gamma((n+1)/2)}{\Gamma(n/2)\Gamma((n^2-1)/2)}\sim |S^{n^2-2}|\cdot \frac{\sqrt{\pi}}{2}n^{1/2}\end{equation}
(hereafter for a semialgebraic set $S$ of dimension $d$ and whose smooth locus is endowed with a Riemannian structure, we denote by $|S|$ its intrinsic $d$-dimensional volume).

Being $\Sigma_{\beta, n}$ an \emph{hypersurface} in the sphere $S^{N_\beta-1}$, the standard strategy to compute its volume is to take an $\e$-tube in $S^{N_\beta-1}$ around it and look at (one-half) the derivative at zero of the volume $v(\e)$ of this tube (see \cite{tubes} for details on this kind of constructions). Here by $\e$-tube we mean the set of points in $S^{N_\beta-1}$ at distance less then $\e$ from $\Sigma_{\beta, n}$. At this point it is important to notice that $|\det(Q)|\leq\e$ \emph{does not} define an $\e$-tube: the reason is that the norm of the gradient of the determinant along the smooth points of its zero locus is not one. 

The classical Eckart-Young theorem (see \cite[Ch. 11]{BCSS}) states that the distance, in the Frobenius norm, between an invertible matrix $X$ and the set $Z$ of degenerate matrices is given by:
$$d_F(X, Z)=\|X^{-1}\|_{\textrm{op}}^{-1}=\sigma(X)$$
where the last quantity is the \emph{least singular value} of $X$, the smallest of the modulus of the eigenvalues of $X$ . We will prove a generalization of this result for our ensembles (Theorem \ref{Eck-You} below) which allows to write the $\e$-tube of $Z$ in $G_{\beta, n}$ as $\sigma(Q)\leq \e$. In other words, the complement of the $\e$-tube of $Z$ in $G_{\beta,n}$ will consist of matrices with no eigenvalues in $(-\e, \e)$. Intersecting such tube with the sphere we get a neighborhood of $\Sigma_{\beta, n}$ whose volume, to first order, coincides with the volume of an $\e$-tube. Moreover, since the probability distribution on $G_{\beta, n}$ is uniform on the unit sphere $S^{N_\beta-1}$, the limit we are interested can be rewritten as:

 $$|\Sigma_{\beta, n}|=\lim_{\e\to 0}\frac{v(\e)}{2\e}=|S^{N_\beta-1}|\cdot \lim_{\e\to 0}\frac{1-\PP\{\sigma(Q)\geq \e\|Q\|_F\}}{2\e}.$$ 

The probability in the above statement is \emph{almost} the gap probability (geometrically is the probability of a cone, whereas the gap probability is for a cylinder); this is responsible for the Gamma functions in the following statement, which gives an answer to question \eqref{q1} above.
\begin{thmi}[The volume of singular matrices]\label{thm:golden}
For $\beta=1,2,4$:
$$|\Sigma_{\beta, n}| = |S^{N_\beta-1}|\frac{\sqrt{2}\Gamma(\frac{N_\beta}{2})}{\Gamma(\frac{N_\beta-1}{2})}\cdot \frac{1}{2}\left( - f_{\beta, n}'(0) \right).$$
\end{thmi}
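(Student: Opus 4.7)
The plan is to start from the identity displayed just before the theorem,
\[
|\Sigma_{\beta,n}| \;=\; |S^{N_\beta-1}|\cdot \lim_{\e\to 0}\frac{1-\PP\{\sigma(Q)\geq \e\|Q\|_F\}}{2\e},
\]
where $Q$ is Gaussian in $G_{\beta,n}$, and to convert this ``cone'' probability into the gap (``cylinder'') probability $f_{\beta,n}(\e)=\PP\{\sigma(Q)\geq\e\}$. As the excerpt notes, the discrepancy between cone and cylinder is exactly what produces the Gamma ratio in the statement.

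First I would exploit the rotational invariance of the Gaussian density $\propto e^{-\frac12\|Q\|_F^2}$ on $G_{\beta,n}$: writing $Q=r\hat Q$ with $r=\|Q\|_F$ and $\hat Q\in S^{N_\beta-1}$, the factors $r$ and $\hat Q$ are independent, $\hat Q$ is uniform on the sphere, and $r$ follows the chi distribution with $N_\beta$ degrees of freedom. Since the least singular value is homogeneous of degree one, $\sigma(Q)=r\,\sigma(\hat Q)$, so setting $g(\e):=\PP\{\sigma(\hat Q)\geq \e\}$ the cone probability above is precisely $g(\e)$, and conditioning on $r$ gives the key identity
\[
f_{\beta,n}(\e) \;=\; \PP\{r\sigma(\hat Q)\geq \e\} \;=\; \EE_r\bigl[g(\e/r)\bigr].
\]

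Differentiating at $\e=0$ yields $f_{\beta,n}'(0)=g'(0)\cdot \EE[1/r]$, and a direct integration against the chi density produces
\[
\EE\bigl[1/r\bigr] \;=\; \frac{\Gamma((N_\beta-1)/2)}{\sqrt{2}\,\Gamma(N_\beta/2)}.
\]
Since the starting identity reads $|\Sigma_{\beta,n}|=|S^{N_\beta-1}|\cdot(-g'(0))/2$, substituting $g'(0)=f_{\beta,n}'(0)/\EE[1/r]$ and simplifying delivers exactly the claimed formula.

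The main technical point is justifying the interchange of $d/d\e$ with $\EE_r$ at $\e=0$. The subtlety is that the constraint $\|\hat Q\|_F=1$ forces $\sigma(\hat Q)\leq 1/\sqrt{n}$, so $g(\e/r)=0$ on $\{r<\e\sqrt{n}\}$ and the naive difference quotient $(1-g(\e/r))/\e$ is unbounded near $r=0$. However, the chi density vanishes to order $r^{N_\beta-1}$ at the origin, so the contribution from $r<\e\sqrt n$ is only $O(\e^{N_\beta-1})$ and drops out in the limit. On the complementary region $r\geq \e\sqrt n$ the bound $|(1-g(\e/r))/\e|\leq M/r$, combined with the finiteness of $\EE[1/r]$, supplies the dominator needed to pass the limit through the expectation. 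Everything else is bookkeeping.
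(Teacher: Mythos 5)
Your proof is correct and follows essentially the same route as the paper: what you derive by writing $Q=r\hat Q$ and conditioning on the chi-distributed $r=\|Q\|_F$ is precisely the paper's Lemma \ref{lemma:cylcone}, whose integral representation $f(\e)=\frac{\mathrm{Vol}(S^{N-1})}{(2\pi)^{N/2}}\int_0^\infty g(\e/r)r^{N-1}e^{-r^2/2}\,dr$ is your $\EE_r[g(\e/r)]$, and the Gamma ratio is your $\EE[1/r]^{-1}$. Your dominated-convergence discussion (splitting at $r=\e\sqrt n$ to handle the region where $g(\e/r)=0$) is in fact a bit more careful than the paper's, which simply invokes the uniform bound on $|g'|$ after differentiating under the integral sign.
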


Plugging in the asymptotic given in Proposition \ref{gapasympt}, we obtain the analogue of \eqref{volumegeneral}:
\begin{equation}\label{volumeasympt}|\Sigma_{\beta, n}|\sim |S^{N_\beta-2}|\cdot \frac{2}{\sqrt{\pi}}n^{1/2}.\end{equation}

A classical way to apply these results is through the \emph{integral geometry formula} (see \cite{Howard} for a modern exposition on the subject). This formula allows to reduce many problems in random geometry to simple volume computations; for example we can use it, combined with the above results, to get the expected number  $\alpha_n$ of zeros of the determinant of a random matrix polynomial $A(t)=A_0+A_1t+\cdots+A_kt^k$:
\begin{equation}\label{matrixpoly} \alpha_n \sim \frac{4}{\sqrt{\pi}} n^{1/2} \alpha_1\quad \textrm{with}\quad  \alpha_1\sim 2\log k \end{equation}
(this result should be compared with \cite[Thm. 6.1]{EdelmanKostlan95}).
\subsection*{Topology of random intersections of quadrics}As a second application, we consider the problem of studying the expectation of Betti numbers of algebraic subsets of $\RP^{n-1}$ defined by random quadratic equations.

The problem of computing the expectation of topological properties of random algebraic varieties has become very popular recently. 
The first advance dates back to the work of P. B\"urgisser \cite{Bu} and S. S. Podkorytov \cite{Po}, 
who computed the expectation of the Euler characteristic of an algebraic set defined by independent, 
centered random polynomials whose distribution is invariant by orthogonal change of variables. 

For the case of our interest, consider $q_1, \ldots, q_k$ independent random quadratic forms, distributed in such a way that the corresponding matrices $Q_1, \ldots, Q_k$ are $\textrm{GOE}(n)$ (we will call such distribution the \emph{Kostlan} distribution, see \cite{Bu, EdelmanKostlan95}). Let us denote by $X\subset \RP^{n-1}$ the common zero locus of these quadratic forms:
$$X=\{[x]\in \RP^{n-1}\,|\, q_1(x)=\cdots=q_k(x)=0\}.$$

Using the results from \cite{Bu}, one can compute (assuming $\dim(X)$ is even) the average Euler characteristic of $X$:
$$\EE \chi(X)=a_0+a_2+\cdots+a_{{\dim(X)}},$$
where the $a_{2j}$ are the coefficients of the power series $\sum_{j\geq 0}a_{2j}t^{2j}=(\frac{2}{1+t^2})^{k/2}.$ 

However, these results are still related to metric properties: 
they follow from a striking computation of the average curvature polynomial of a random algebraic variety, 
but they give little information on individual Betti numbers 
(take for example the case of a plane curve of degree $d$, 
whose Euler characteristic is always zero but whose number of components can be as large as $\frac{(d-1)(d-2)}{2}+1$).

Progress on purely topological problems (such as estimating individual Betti numbers) has been made only recently. 

The first result in this direction was made by F. L. Nazarov and M. L. Sodin: in \cite{NazarovSodin} the authors prove that a random spherical harmonic of degree $d$ on $S^2$ has on average $c\cdot d^2$ nodal lines. Extending their technique, the current authors (motivated by P. Sarnak's letter \cite{Sarnak}) studied the expectation of the number of connected components $b_0$ of a random hypersurface $Y$ of degree $d$ in $\RP^{n}$ (here a homogeneous polynomial of degree $d$ in $n+1$ is sampled at random uniformly from the unit sphere in the $L^2_{S^n}$-norm). In \cite{LerarioLundberg} they proved that there exist constants $C_n,c_n>0$ such that for large $d$:
$$ c_n d^n \leq \EE b_0(Y) \leq C_n d^{n}.$$

The novel techniques introduced in \cite{NazarovSodin} can be extended 
to show that in fact $\EE b_0(Y)/d^n$ has a limit as $d$ goes to
infinity. However the method is not explicit and 
yields little more than the non-vanishing of this limit. In the case
$n=2$, where $\EE b_0(Y)$ is  asymptotic to $c\cdot d^2$ for $c>0$, M. Nastasescu \cite{Sarnak, Nastasescu}
computed this Nazarov-Sodin constant $c$ numerically and    
found that it is approximately 0.0195.

In a sequence of papers \cite{GayetWelschinger2012, GayetWelschinger3, GayetWelschinger2013} D. Gayet and J-Y. Welschinger proved that for a Kostlan hypersurface $Y$ of degree $d$ in $\RP^n$ (i.e. whose defining polynomial has a distribution which is invariant by \emph{unitary} change of coordinates), for every Betti number $b_i$ there are two constants $a_{i,n}, A_{i, n}>0$ such that:
$$a_{i,n}d^{n/2}\leq \EE b_i(Y)\leq A_{i, n}d^{n/2}$$

Again, the problem of determining the sharp constants is far from trivial: 
the reason is that for the large degree limit no technique is available for exact computations (the upper bound is obtained using Morse inequalities and the lower bound using the barrier method from \cite{LerarioLundberg,NazarovSodin}: 
neither of these methods produces equalities).

Here we focus on the somehow opposite situation: the degree and the codimension are fixed (we consider intersection of $k$ quadrics in $\RP^{n-1}$) and the number of variables goes to infinity. The deterministic part of this problem has been studied by the first author and A. A. Agrachev in \cite{AgrachevLerario} and by the first author in \cite{Lerariotwo, Lerariocomplexity}. The main ingredient is the use of \emph{spectral sequences}, a powerful machinery from algebraic topology. The advantage of this technique is that for the case of intersection of \emph{few} quadrics (compared to the number of variables) it gives very accurate approximations to the topology of $X$.

In this setting we consider Betti numbers as the basic non metric topological invariant of $X$: they count the number of ``holes'' in $X$. The question we are interested in is:
\begin{equation}\label{q2}\textrm{``What is the expected value of the Betti numbers of $X$?''}\end{equation}
In addition to computing the expectation of a single Betti number $b_i(X)$, 
we are also interested in understanding how they distribute in the range $i=0, \ldots, \dim(X).$

A random study of a spectral sequence was done by the first author in \cite{Lerario2012}, where the topology of the intersection $X$ of \emph{two} random, independent Kostlan quadrics in $\RP^{n-1}$ was studied. The author proved that as $n$ goes to infinity:
\begin{equation}\label{eq2}\EE b(X)=n+o(n),\end{equation}
where $b(X)=\sum_{i\geq 0}b_i(X;\mathbb{Z}_2)$ denotes the sum of all Betti numbers of $X$. Here we sharpen this result as follows. 
\begin{thmi}[Intersection of two quadrics]
If $X$ is the intersection of two random, independent Kostlan quadrics in $\RP^{n-1}$:
$$\mathbb{E}b(X) = n  -\sqrt{\frac{\pi}{2}}f'_{1,n}(0) + O(n^c)= n + \frac{2}{\sqrt{\pi}}n^{1/2} + O(n^c) \quad \textrm{for any $c>0$}.$$
\end{thmi}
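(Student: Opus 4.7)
The plan is to combine the spectral-sequence description of $b(X)$ from \cite{AgrachevLerario, Lerario2012} with an integral-geometry argument based on Theorem \ref{thm:golden}. Parameterize the pencil of quadrics by $\omega=(\omega_1,\omega_2)\in S^1$, set $Q_\omega=\omega_1 Q_1+\omega_2 Q_2$, and consider the positive-inertia function $\omega\mapsto \ii(Q_\omega)$ on $S^1$. Agrachev's spectral sequence expresses $b(X)$ as $n$ plus a correction whose $E_1$-page contribution equals the number $\mathcal{J}$ of transverse crossings of the normalized pencil with the discriminant hypersurface $\Sigma_{1,n}$, modulo the action of higher differentials $d_r$. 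The estimate $\EE b(X)=n+o(n)$ from \cite{Lerario2012} comes from bounding both contributions crudely; I would instead compute $\EE\mathcal{J}$ exactly, and then bound the higher-order spectral remainder by $O(n^c)$.

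For the main term: when $Q_1,Q_2$ are independent $\textrm{GOE}(n)$ matrices, the normalized pencil $\omega\mapsto Q_\omega/\|Q_\omega\|_F$ is a uniformly random great circle in $S^{N_1-1}\subset G_{1,n}$, and the spherical Crofton formula gives
$$\EE\mathcal{J}=\frac{|\Sigma_{1,n}|}{|S^{N_1-2}|}.$$
Substituting Theorem \ref{thm:golden} and using the identity $|S^{N_1-1}|/|S^{N_1-2}|=\sqrt{\pi}\,\Gamma((N_1-1)/2)/\Gamma(N_1/2)$, the Gamma-function ratios cancel exactly and yield
$$\EE\mathcal{J}=\frac{\sqrt{2\pi}}{2}\,(-f'_{1,n}(0))=-\sqrt{\pi/2}\,f'_{1,n}(0).$$
Combined with the identity $\EE b(X)=n+\EE\mathcal{J}+(\text{remainder})$, this produces the middle expression in the theorem; Proposition \ref{gapasympt} then converts the asymptotic of $f'_{1,n}(0)$ into the advertised $\tfrac{2}{\sqrt{\pi}}n^{1/2}$.

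The hard step is upgrading the $o(n)$ remainder of \cite{Lerario2012} to the sharper $O(n^c)$ for every $c>0$. Two sources contribute: nontrivial higher differentials $d_r$ with $r\geq 2$ in Agrachev's spectral sequence, and non-transverse crossings of the random $2$-plane with singular strata of $\Sigma_{1,n}$. The second is essentially free: the singular locus of $\Sigma_{1,n}$ has codimension $\geq 3$ in $G_{1,n}$, so the random pencil meets it with negligible probability. The first is the genuine obstacle. Higher differentials only cancel pairs of $E_1$-generators, so the correction they produce has a definite sign; controlling its expected magnitude polynomially should follow from sharp uniform-in-$\omega$ concentration estimates for $\ii(Q_\omega)$ along the great circle, refining the semicircle-law input used in \cite{Lerario2012} to show that $\ii(Q_\omega)$ stays within $O(n^{c/2})$ of $n/2$ with overwhelming probability. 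Such a bound forces all but $O(n^c)$ of the $E_1$-generators to pair off at the $E_2$-page, so that the surviving contribution to $b(X)$ beyond $n+\mathcal{J}$ is of the claimed polynomial size.
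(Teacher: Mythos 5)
Your proposal correctly identifies the two main ingredients — the Agrachev--Lerario spectral sequence and the integral geometry formula applied to $|\Sigma_{1,n}|/|S^{N_1-2}|$ — and your computation showing that $|\Sigma_{1,n}|/|S^{N_1-2}|=-\sqrt{\pi/2}\,f'_{1,n}(0)$ via Theorem \ref{thm:golden} is exactly right. However, the structure of your decomposition of $b(X)$ is wrong, and this misplaces the role of the random-matrix concentration estimate that actually produces the $O(n^c)$ error.

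The paper does \emph{not} write $b(X)=n+\mathcal{J}+(\textrm{remainder})$. What Theorem \ref{bettiq} and Proposition \ref{prop:bettitwo} give, for $k=2$, is the deterministic identity
\begin{align*}
b(X) &= b(E)+O(1) = 3n-4\mu+\tfrac{1}{2}\mathrm{Card}(W\cap\Sigma_{1,n})+O(1),
\end{align*}
where $\mu=\max_{\omega\in S^1}\ii(\omega Q)$ is the maximum of the index over the pencil and the $O(1)$ is the \emph{uniform} bound $|b(X)-b(E)|\le 2$ from \cite{Lerariotwo}. The ``$n$'' in the final answer is not a fixed background term: it emerges only on average, from the cancellation $\EE(3n-4\mu)=n+O(n^\alpha)$, which in turn uses Proposition \ref{propo:mu} and Lemma \ref{lemma:mu} (the Erd\H{o}s--Yau--Yin rigidity of eigenvalues applied to $\ii(\omega_j Q)$ over an $\e_n$-net). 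So the concentration estimate you invoke in your last paragraph is genuinely needed — but it is needed to compute $\EE\mu$, not to tame higher spectral-sequence differentials. Conversely, the higher differentials you worry about contribute only $O(1)$ for $k=2$, deterministically; you don't need any probabilistic control of them. Your worry about non-transverse crossings of the singular strata is likewise resolved by a genericity argument, not a quantitative bound.

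Two smaller issues. First, your Crofton step has a normalization slip: if $\mathcal{J}$ is the number of points where the great circle $W\cap S^{N_1-1}$ meets $\Sigma_{1,n}$, then $\EE\mathcal{J}=\tfrac{2|\Sigma_{1,n}|}{|S^{N_1-2}|}$; what enters the spectral sequence is $\tfrac{1}{2}\mathrm{Card}(W\cap\Sigma_{1,n})$, i.e.\ the number of \emph{lines} of singular matrices, which is $\tfrac{|\Sigma_{1,n}|}{|S^{N_1-2}|}$. Your final numerical answer is correct, but as written $\mathcal{J}$ and the lines count are off by a factor of $2$. Second, to pass from $\tfrac{|\Sigma_{1,n}|}{|S^{N_1-2}|}\sim \tfrac{2}{\sqrt{\pi}}n^{1/2}$ to the sharp form $\tfrac{2}{\sqrt{\pi}}n^{1/2}+O(1)$ one needs the $O(1)$ error bound for the volume asymptotic (the remark after Corollary \ref{cor:volume}), which is an additional input you did not flag. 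To complete the proof you should replace your decomposition with the one above, state and use the estimate $\EE\mu=\tfrac{n}{2}+O(n^\alpha)$, and cite the uniform $O(1)$ bound on the spectral-sequence error for $k=2$.
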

Thus we get two orders of precision with \emph{exact} constants. A random application of the spectral sequence argument requires an average count of the number of singular lines in the span of the two quadrics defining $X$; ultimately this leads to the appearance of the derivative of the gap probability in the above formula.

In the general case of intersection of $k$ quadrics, we can fix a Betti number $b_i$ and ask for its expectation; even allowing the index $i$ to depend on $n$, as long as it stays sufficiently bounded away from $n/2$ the expectation turns out to be $1$. Here we use the full power of spectral sequences, that guarantee in the large $n$ limit and for a fixed $i$ the existence of \emph{exactly one} homology class of dimension $i$ in $X$ (with high probability).
\begin{thmi}[Intersection of $k$ quadrics.]
For every $k, i\in \mathbb{N}$, every $M>0$ and every open set $J\subset [0,\frac{1}{2})\cup(1/2, 1]$ we have:
$$\mathbb{E}b_i(X)=1+O(n^{-M})\quad \textrm{\and}\quad \sum_{j\in nJ}\EE b_i(X)=|\mathbb{N} \cap n J|+O(n^{-M}).$$
\end{thmi}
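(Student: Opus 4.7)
The plan is to deduce the result from the spectral-sequence approach to the topology of intersections of quadrics developed by Agrachev and the first author \cite{AgrachevLerario, Lerariotwo, Lerariocomplexity}, combined with a concentration estimate for the positive inertia index on Gaussian ensembles. Let $W=\mathrm{span}(q_1,\ldots,q_k)\subset \mathrm{Sym}(n,\mathbb{R})$ and, for each $[\omega]$ in the sphere $\mathbb{S}(W)\cong S^{k-1}$, write $q_\omega=\omega_1q_1+\cdots+\omega_kq_k$. The family of sublevel sets
$$\Omega^j=\{[\omega]\in\mathbb{S}(W):\,i^+(q_\omega)\geq n-j\}$$
induces a filtration whose associated spectral sequence converges (after an index shift) to $H_*(X;\mathbb{Z}_2)$. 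When this filtration is \emph{trivial} at indices $j$ in a neighborhood of $n-i-1$---meaning each such $\Omega^j$ is either empty or all of $\mathbb{S}(W)$---the sequence collapses in the relevant range and one recovers $b_i(X)=1$, matching the Betti number of a linear $\mathbb{RP}^{n-1-k}$ in dimension $i$.

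The probabilistic input is the concentration of $i^+$ around $n/2$ for Kostlan quadratics. By Gaussian rotational invariance, for each fixed $[\omega]\in\mathbb{S}(W)$ the matrix $q_\omega$ is $\mathrm{GOE}(n)$ distributed, and classical large-deviation estimates for the signature of the GOE (see \cite{Mehta}) yield
$$\mathbb{P}\bigl(|i^+(q_\omega)-n/2|>\delta n\bigr)\leq C e^{-c n}.$$
To upgrade this to a bound uniform in $[\omega]$, I would cover $\mathbb{S}(W)$ by a polynomial-size $\eta$-net $\mathcal{N}$ with $\eta=n^{-A}$ and exploit the local constancy of $i^+$ off the discriminant $\{\det q_\omega=0\}$. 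Avoidance of the discriminant along the net is supplied by Proposition \ref{gapasympt}: for each $[\omega]\in\mathcal{N}$,
$$\mathbb{P}\bigl(q_\omega\text{ has an eigenvalue in }(-\eta,\eta)\bigr)=1-f_{1,n}(\eta)=O(n^{1/2}\eta)=O(n^{1/2-A}),$$
so a union bound over $|\mathcal{N}|=O(n^{A(k-1)})$ leaves a negligible error once $A=A(k,M)$ is chosen large enough.

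Combining these on the good event $\mathcal{G}_n=\{\forall [\omega]\in \mathbb{S}(W):\,|i^+(q_\omega)-n/2|\leq \delta n\}$ (which has probability $1-O(e^{-c'n})$), the filtration is trivial outside the band $j\in[(1/2-\delta)n,(1/2+\delta)n]$, so for any fixed $i$ (or any $i$ bounded away from $n/2$ by $\delta n$) the spectral sequence collapses and $b_i(X)=1$ deterministically on $\mathcal{G}_n$. On $\mathcal{G}_n^c$ the deterministic Milnor--Thom bound $b_i(X)\leq (C k)^n$ together with $\mathbb{P}(\mathcal{G}_n^c)=O(e^{-c'n})$ gives $\mathbb{E}[b_i(X)\mathbf{1}_{\mathcal{G}_n^c}]=o(n^{-M})$, yielding $\mathbb{E}b_i(X)=1+O(n^{-M})$. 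For the second assertion, choose $\delta=\delta(J)>0$ with $J\subset[0,1/2-\delta]\cup[1/2+\delta,1]$; on $\mathcal{G}_n$ every $j\in nJ$ falls in the trivial range, so $b_j(X)=1$ and the sum equals $|\mathbb{N}\cap nJ|$, while the bad-event contribution is absorbed by the same exponential decay.

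The main obstacle is the uniform-in-$[\omega]$ concentration of the inertia index: $i^+$ jumps discontinuously along the discriminant, and a naive single-matrix bound does not suffice because the random span $W$ could a priori contain forms arbitrarily close to singular. The net-plus-avoidance strategy above works precisely because Proposition \ref{gapasympt} provides sharp quantitative control on how close the spectrum of a random Kostlan matrix gets to zero---the mechanism by which the gap probability enters the topology of $X$.
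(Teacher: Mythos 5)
The overall skeleton (spectral sequence from the filtration by $\Omega^j$, concentration of the maximal inertia index $\mu$ around $n/2$, and absorbing the bad event by a deterministic complexity bound) is the same as the paper's. The step that is wrong is your mechanism for making the pointwise concentration of $i^+$ uniform over $S^{k-1}$.

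You propose to put down an $\eta$-net on $S^{k-1}$, use the gap probability to ensure that $q_{\omega_j}$ avoids the discriminant for each net point $\omega_j$, and then invoke local constancy of $i^+$. This union bound does not close for $k\ge 2$. Proposition \ref{gapasympt} gives
$\PP\{q_{\omega_j}\ \text{has an eigenvalue in}\ (-\eta,\eta)\}\asymp n^{1/2}\eta,$
which is \emph{linear} in $\eta$, while a maximal $\eta$-net on $S^{k-1}$ has $\Theta(\eta^{-(k-1)})$ points. The union bound therefore costs $\Theta\!\left(n^{1/2}\,\eta^{2-k}\right)$: for $k=2$ it is $\Theta(n^{1/2})$ independently of how small you take $\eta$, and for $k\ge 3$ it grows as $\eta\to 0$. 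No choice of $A$ fixes this. (And conceptually, for $k\ge 2$ the span $W$ generically \emph{does} meet $\Sigma_{1,n}$ — that nonempty intersection is what drives the whole spectral-sequence calculation — so an argument whose good event is ``the random pencil avoids the discriminant near every net point'' is trying to establish something that is essentially never true.)

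The paper does not try to avoid the discriminant. Instead it bounds the \emph{number} of sign changes of eigenvalues as $\omega$ moves from a net point $\omega_j$ to a nearby $\omega$: by the Weilandt--Hoffman inequality, any eigenvalue that changes sign over such a move must already lie within distance $\approx\e_n\sqrt{n}$ of zero at $\omega_j$, and the number of such eigenvalues is controlled with superpolynomially small failure probability by the Erd\H{o}s--Yau--Yin rigidity theorem (Theorem \ref{thm:EYY}), \emph{not} by the gap probability. It is precisely because that rigidity bound decays like $\exp\{-c(\log n)^{\phi L}\}$ (faster than any polynomial) that a union over the polynomial-size net is harmless. Two secondary remarks: your claimed exponential large-deviation bound for $i^+(q_\omega)$ is not in Mehta and needs a real citation (the paper uses the Erd\H{o}s--Yau--Yin concentration instead, which is weaker than exponential but sufficient), and on the bad event you do not need Milnor--Thom: the paper uses the quadratic-specific polynomial bound $b(X)=O(n^{k-1})$ from \cite{Lerariocomplexity}, which pairs with the $O(n^{-M})$ rate without needing exponential tails.

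Finally, your closing remark that Proposition \ref{gapasympt} ``is the mechanism by which the gap probability enters the topology of $X$'' is not quite right for this theorem: the gap probability plays no role in the proof of $\EE b_i(X)=1+O(n^{-M})$. It enters only in the $k=2$ refinement (Theorem \ref{bettirandomtwo}), where $\frac12\,\EE\,\mathrm{Card}(W\cap\Sigma_{1,n})$ is computed via the integral geometry formula and the volume $|\Sigma_{1,n}|$, which in turn is proportional to $-f'_{1,n}(0)$.
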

Thus, as long as we fix the complexity class 
(intersection of $k$ quadrics) 
and the dimension of the homology, 
everything stabilizes as $n$ goes to infinity and the ``pointwise'' limit of the function $i\mapsto \EE b_i(X)$ is one. In fact we prove that in the above theorem the convergence is uniform for Betti numbers $b_i$ with $i$ bounded away from $\frac{n}{2}$ by at least $n^{\alpha}$, $0<\alpha<1$ (see Theorem \ref{thm:bettiquadrics}).

On the other hand, as $n$ goes to infinity the sum of \emph{all} Betti numbers \emph{is not} expected to be $n$. 
For example, middle Betti numbers are responsible for the $2n^{1/2}/\sqrt{\pi}$ term in the formula for two quadrics.

We conjecture that for the intersection of $k$ quadrics the middle Betti numbers give a contribution of the order $\Theta(n^{(k-1)/2}).$

\subsection*{Gap probability at zero}
We now turn back to the basic problem of evaluating exactly the derivative at zero of the gap probability. The asymptotics of a rescaled version of $f_{\beta, n}$, tracing its behavior close to zero, but letting $n$ going to infinity first is well studied (see \cite[Ch. 3]{AGZ} and the references therein). In our setting, we are actually interested in the gap probability for fixed $n$ (in particular its derivative at zero). For \emph{finite} ensembles the study goes back to the pioneering work of M. Gaudin \cite{Gaudin} and later M. Jimbo, T. Miwa, Y. M\^ori and M. Sato \cite{JMMS}. Forrester and Witte \cite{FoWy}, drawing on \cite{JMMS}, have evaluated $f_{\beta,n}$ ($\beta=1,2$) using methods from integrable systems. For example, if $\beta=1$ and $n$ is even:\footnote{Here we use the same notation as in \cite{FoWy} to help the reader comparing with this reference. The subscript of $\sigma_V$ is due to the connection with the Painlev\'e fifth equation.}
$$f_{1,n}(\e)=\tau_{\sigma_V}(\e^2),$$
where $\tau_{\sigma_V}$ is a function satisfying:
$$\label{tau}\sigma_V(t)=t\frac{d}{dt}\log \tau_{\sigma_V}(t)\quad \textrm{and}\quad \lim_{t\to 0^+}\sigma_{V}(t)t^{-1/2}=-\frac{\Gamma(\frac{n+1}{2})}{\Gamma(\frac{n}{2})\Gamma(\frac{1}{2})\Gamma(\frac{3}{2})}=-c_n. $$

Using the two above relations, one can evaluate the derivative at zero of $f_{1, n}$ under the assumption that $n$ is even (see \cite{Lerario2012}):
$$f'_{1, n}(0)=-2c_n.$$

In the general case, $\beta>0$ and regardless the parity of $n$, the situation is more delicate and we use the \emph{joint density} of the eigenvalues $\lambda_1, \ldots, \lambda_n$ for $Q\in G_{\beta, n}$. We will denote such joint density by $F_{\beta, n}$; its explicit expression is given in \cite{Mehta}:
\begin{equation}\label{eigendensity}F_{\beta,n}(\lambda) = C_\beta(n) \exp\left(-\frac{\beta}{2} \sum_{j=1}^n \lambda_j^2 \right) \prod_{j,k \in [1,n]} |\lambda_k - \lambda_j |^{\beta/2},\end{equation}
where $C_{\beta}(n)$ is a normalization constant (its explicit value is written in equation \eqref{densityconstant} below).
In particular one can write $f_{\beta, n}(\e)$ as integral of the joint density for the eigenvalues over the region $((-\infty, -\e)\cup(\e, \infty))^n$; this rather explicit expression allows us to derive the following exact formula. Note that the following theorem holds for arbitrary $\beta>0$, but in our applications we are interested in $\beta=1,2,4$; for these three cases we develop more explicit formulas (Lemmas \ref{b1}, \ref{b2} and \ref{b4}) and the asymptotic stated in Corollary \ref{gapasymptotic}.
\begin{thmi}[The derivative of the gap probability at zero]
\begin{equation}\label{gapexct}f_{\beta, n}'(0)=-2n\frac{C_\beta(n)}{C_\beta(n-1)}\mathbb{E}_{Q\in G_{\beta, n-1}}\left \{|\det(Q)|^\beta\right\}\end{equation}
\end{thmi}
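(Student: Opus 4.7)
The plan is to write $f_{\beta,n}(\varepsilon)$ as an explicit multiple integral of the joint eigenvalue density and then differentiate it at $\varepsilon=0$ by accounting for the motion of the boundary. Since a matrix $Q\in G_{\beta,n}$ has no eigenvalues in $(-\varepsilon,\varepsilon)$ precisely when $(\lambda_1,\ldots,\lambda_n)\in A(\varepsilon):=\bigl((-\infty,-\varepsilon)\cup(\varepsilon,\infty)\bigr)^{n}$, formula \eqref{eigendensity} gives
\[
f_{\beta,n}(\varepsilon)=\int_{A(\varepsilon)} F_{\beta,n}(\lambda)\,d\lambda.
\]

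Next I would differentiate under the integral using the coarea formula for the moving boundary. Each coordinate hyperplane $\{\lambda_j=\pm\varepsilon\}$ contributes a boundary term, and at $\varepsilon=0$ both halves collapse onto $\{\lambda_j=0\}$, so
\[
f_{\beta,n}'(0)\;=\;-2\sum_{j=1}^{n}\int_{\RR^{n-1}} F_{\beta,n}(\lambda)\bigm|_{\lambda_j=0}\,d\lambda_{[n]\setminus\{j\}}.
\]
Using the full symmetry of $F_{\beta,n}$ under permutations of the $\lambda_j$'s, the $n$ summands agree, giving a factor $n$ in front of the single integral with $\lambda_n=0$.

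The crux is then to recognize what remains. Setting $\lambda_n=0$ in the Vandermonde factor $\prod_{j<k}|\lambda_k-\lambda_j|^{\beta}$ pulls out exactly $\prod_{j=1}^{n-1}|\lambda_j|^{\beta}$, while what is left is the Vandermonde of $(\lambda_1,\ldots,\lambda_{n-1})$. Combined with the Gaussian weight $\exp(-\tfrac{\beta}{2}\sum_{j=1}^{n-1}\lambda_j^2)$ (the $\lambda_n=0$ term drops out), this is the $G_{\beta,n-1}$ joint density up to the ratio of normalization constants:
\[
F_{\beta,n}(\lambda)\bigm|_{\lambda_n=0}\;=\;\frac{C_\beta(n)}{C_\beta(n-1)}\,F_{\beta,n-1}(\lambda_1,\ldots,\lambda_{n-1})\,\prod_{j=1}^{n-1}|\lambda_j|^{\beta}.
\]
Since $\prod_{j=1}^{n-1}|\lambda_j|^{\beta}=|\det(Q)|^{\beta}$ when $Q\in G_{\beta,n-1}$ has eigenvalues $\lambda_1,\ldots,\lambda_{n-1}$, integrating yields exactly $\tfrac{C_\beta(n)}{C_\beta(n-1)}\mathbb{E}_{Q\in G_{\beta,n-1}}\{|\det(Q)|^{\beta}\}$, and assembling this with the factor $-2n$ produces \eqref{gapexct}.

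The main obstacle I anticipate is purely technical: justifying the exchange of differentiation and integration near $\varepsilon=0$, or equivalently showing that the boundary term really governs the leading-order behavior of $f_{\beta,n}(\varepsilon)-f_{\beta,n}(0)$. The Gaussian factor $\exp(-\tfrac{\beta}{2}\sum \lambda_j^2)$ gives uniform integrability on $\RR^n$, and the boundary integrand is continuous in $\varepsilon$, so a standard dominated convergence argument applied to the difference quotient $[f_{\beta,n}(\varepsilon)-f_{\beta,n}(0)]/\varepsilon$ should do the job; this essentially reduces to verifying that the integrand $F_{\beta,n}|_{\lambda_n=0}$ has a finite integral, which is immediate because the integral above equals an expectation of $|\det Q|^{\beta}$ times an explicit ratio of constants.
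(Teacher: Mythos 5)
Your proposal is correct and follows essentially the same path as the paper's own argument: write $f_{\beta,n}(\e)$ as an integral of the joint eigenvalue density over the gap region, differentiate by moving the boundary (getting a factor $-2n$ from the two faces per coordinate and permutation symmetry), justify the $\e\to 0$ limit by dominated convergence, and then factor the Vandermonde at $\lambda_n=0$ to recognize $\frac{C_\beta(n)}{C_\beta(n-1)}|\det|^\beta$ times the $G_{\beta,n-1}$ density. The only cosmetic difference is that the paper first writes out $f'_{\beta,n}(\e)$ at finite $\e$ (illustrating with $n=2$) before passing to the limit, whereas you collapse the two boundary faces directly at $\e=0$ and then discuss the justification afterward.
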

In fact all the quantities in the above equation are classically known (the expectation of the modulus of the determinant is computed in \cite{Mehta} using the Mellin transform; notice that it is for an ensemble of dimension one less than the original one). The asymptotic analysis of these quantities, as needed for Proposition \ref{gapasympt}, is still delicate and is part of the results of this paper.

\subsection*{Related problems}We discuss in this section some interesting related problems. 

To start with, the theorem on the intersection of $k$ quadrics answers question \eqref{q2} above only for Betti numbers away from the middle ones. What is the behavior of these middle Betti numbers remains an open problem (for $k>2$). Here we want to motivate why we believe that their contribution should be of the order $O(n^{(k-1)/2}).$ As we already mentioned, the set $\Sigma_{1,n}$ has singularities of codimension three in $G_{1,n}=\textrm{Sym}(n, \RR)$; for a generic choice of a $k$-dimensional space ($k\geq 3$) $\Sigma_{1,n}\cap W$ is smooth, but when $k$ is bigger singularities are unavoidable. Let us set $\Sigma_{W}^{(1)}=\Sigma_{1,n}\cap W$ and denote by $\Sigma_W^{(r)}$ the set of singular points of $\Sigma_{W}^{(r-1)}$.  In \cite{Lerariocomplexity} the first author proves that for  a generic choice of $W=\textrm{span}\{q_1, \ldots, q_k\}$ the following inequality holds between the Betti numbers of $X=\{q_1=\cdots=q_k=0\}$ and those of $\Sigma_{W}^{(r)}$:
$$b(X)-n\leq \frac{1}{2}\sum_{r\geq 1}b(\Sigma_{W}^{(r)}).$$
Thus in a sense, the homological complexity of $X$ is bounded by the sum of the complexities of the $\Sigma_{W}^{(r)}$. For example in the case $k=2$ we see that $b(X)-n$ is bounded by the number of singular lines in $W\cap \Sigma_{1,n}$ (in this case actually the above formula is almost exact and is the key for the theorem on the intersection of two random quadrics). Since these lines are defined by a homogeneous equation of degree $n$ they are at most $n$; despite this, when we look at their average number we see there are order $O(n^{1/2})$ of them (this can be read from \eqref{matrixpoly}). 

\begin{center}
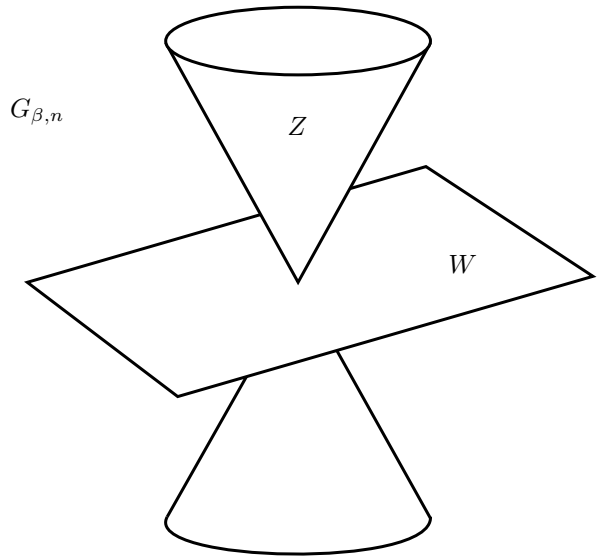
\begin{figure}\label{figrandomplane}
\scalebox{1} 
{
\begin{pspicture}(0,-3.77)(8.441015,3.75)
\psbezier[linewidth=0.04,fillstyle=solid](2.7610157,-3.05)(2.4210157,-3.67)(6.301016,-3.75)(6.2610154,-3.05)
\usefont{T1}{ptm}{m}{n}
\rput(7.1124706,0.335){$W$}
\usefont{T1}{ptm}{m}{n}
\rput(4.5124707,2.355){$Z$}
\usefont{T1}{ptm}{m}{n}
\rput(1.0324707,2.335){$G_{\beta,n}$}
\psline[linewidth=0.04cm](4.5010157,0.03)(2.7610157,-3.05)
\psline[linewidth=0.04cm](4.5210156,0.05)(6.2610154,-3.07)
\pspolygon[linewidth=0.04,fillstyle=solid](0.90101564,0.07)(2.9010155,-1.45)(8.421016,0.15)(6.2010155,1.61)
\rput{-180.0}(9.002031,3.28){\pstriangle[linewidth=0.04,dimen=outer,fillstyle=solid](4.5010157,0.03)(3.56,3.22)}
\psellipse[linewidth=0.04,dimen=outer,fillstyle=solid](4.5010157,3.28)(1.78,0.47)
\usefont{T1}{ptm}{m}{n}
\rput(6.6924706,0.315){$W$}
\usefont{T1}{ptm}{m}{n}
\rput(4.4924707,2.155){$Z$}
\end{pspicture} 
}

\caption{A random plane $W$ in $G_{\beta,n}$; here $Z$ is the set of singular matrices. What is the probability that this plane intersects $Z$ only at the origin? In Theorem \ref{bettirandomtwo} one has to count the number of lines in $Z\cap W.$}
\end{figure}
\end{center}

In the case $k=3$ the situation is fairly more complicated; here we have to compute the average number of components of the random curve:
$$\Sigma_W=\{(\omega_1, \omega_2, \omega_3)\in S^2\,|\, \det(\omega_1Q_1+\omega_2Q_2+\omega_3Q_3)=0\},\quad Q_1, Q_2, Q_3\in \textrm{GOE}(n).$$
A theorem of V. Vinnikov \cite{Vinnikov} states that \emph{every} real algebraic curve arises in this way; thus the above construction gives another possible model for random curves (already introduced in \cite{Lerario2012, LerarioLundberg}): curves arising as random hyperplane sections of $\Sigma_{1,n}.$ 

Gayet and Welschinger's result on random algebraic manifolds states that fixing $n$ and letting the degree going to infinity one gets on average a homological complexity which is of order the square root of the complete intersection in complex projective space \cite{GayetWelschinger4}. Here for $X$ we are performing the opposite limit ($k$ and $d=2$ are fixed and $n\to \infty$) but by analogy we guess that we should get on average the square root of the homological complexity of a complete intersection of $k$ quadrics in the projective space, which is of order $O(n^{k-1}).$ In particular, because of the theorem on $k$ quadrics, we see that this homological complexity should come from middle Betti numbers, which in turn are bounded by the r.h.s. of the above inequality.

Closely related to this is a random version of a problem studied by J. Adams, P. Lax and R. Phillips. In \cite{ALP} the authors studied (topological) restrictions on the dimension of a subspace $W$ of $G_{\beta,n}$ missing $\Sigma_{\beta,n}$ (i.e. $W$ hits the set of singular matrices only at the origin). 
It turns out that the maximal dimension $\rho_\beta(n)$ of such $W$ is related to Radon-Hurwitz numbers (see \cite{ALP}). In particular if $k\leq \rho_\beta(n)$, it is natural to ask the question:
$$\textrm{``What is the probability that a random $k$-dimensional space in $G_{\beta,n}$ misses $\Sigma_{\beta,n}$?''}$$
Notice that in the case $n$ is odd we must hit $\Sigma_{\beta, n}$ at a nonzero point (in fact the determinant is a real polynomial of \emph{odd} degree and must vanish somewhere on $W$). For this reason we introduce the following event:
$$L_\beta(k,n)=\left\{W\simeq \RR^k\subset G_{\beta,n}\,\textrm{such that} \,\max_{Q\in W} \ii(Q)=\left\lfloor \frac{n+1}{2}\right\rfloor\right\}$$
where $\ii(Q)$ denotes the number of positive eigenvalues of the matrix $Q$ from $G_{\beta,n}.$ It is not difficult to show that the probability of missing $\Sigma_{\beta,n}$ for even $n$ equals the probability of $ L_{\beta}(k,n)$. Thus we can ask more generally for the probability of $L_{\beta}(k,n)$ and its asymptotic. 

This problem is related to the above question on the topology of intersection of three random quadrics, as:
$$\PP\{L_{1}(3,n)\}=\textrm{probability that a random determinantal curve of degree $n$ is empty}.$$

\subsection*{Structure of the paper}In Section \ref{sec:eck-you} we prove the generalization of Eckart-Young Theorem for our set of $\beta$-Hermitian matrices. Section \ref{sec:gap} is devoted to the explicit computation of the derivative of the gap probability at zero (Theorem \ref{lemma:joint}); its large $n$ behavior (Corollary \ref{gapasymptotic}) will actually follow from the asymptotic analysis we perform in Section \ref{sec:volume}, where we study the volume of $\Sigma_{\beta, n}$ (Theorem \ref{thm:golden}); in this section we also prove the result on the expected number of zeros of determinant of matrix functions. Section \ref{sec:quadrics} is devoted to random intersection of quadrics; in the first part of the section we recall what we need from algebraic topology, giving a short account of spectral sequences; in the second part we give the proofs of the theorems on average Betti numbers (Theorem \ref{thm:bettiquadrics} and Corollary \ref{cor:interval} for the case of $k$ quadrics and Theorem \ref{bettirandomtwo} for the case of two).

\subsection*{Acknowledgements}
 The authors wish to thank Saugata Basu for his constant support and Peter Sarnak for helpful suggestions.

\section{Eckart-Young theorem}\label{sec:eck-you}

In this section we prove a generalization of the theorem of Eckart and Young \cite{EckartYoung}. In its classical statement it provides the distance (in the Frobenius norm) between an invertible matrix $Q$ and the set $Z$ of matrices with determinant zero:
$$d(Q, Z)=\|Q^{-1}\|^{-1}_{\textrm{op}}.$$
We will be interested in computing the above distance in the metric space $G_{\beta,n}$ (the distance is again induced by the Fobenius norm, but a priori it could be bigger than the above one; a statement for the case of real symmetric matrices already appeared in \cite{H}). Notice that if $Q\in G_{\beta, n}$ is invertible then $\|Q^{-1}\|^{-1}_{\textrm{op}}$ equals the least singular value $\sigma(Q)$ of $Q$. 
\begin{thm}\label{Eck-You}
Let $Q\in G_{\beta,n}$ be invertible and $Z$ be the set of matrices in $G_{\beta,n}$ with determinant zero. Then:
$$d_{G_{\beta,n}}(Q,Z)=\sigma(Q).$$
\end{thm}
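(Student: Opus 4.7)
The plan is to prove the equality by establishing two inequalities, one obvious and one constructive.

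For the lower bound $d_{G_{\beta,n}}(Q,Z) \geq \sigma(Q)$, I would simply note that $G_{\beta,n}$ embeds isometrically (with respect to the Frobenius norm) into the space $M_n$ of all $n \times n$ matrices over the appropriate division algebra, and that $Z$ is a subset of the larger set $\widetilde{Z} \subset M_n$ of singular matrices. The classical Eckart--Young theorem cited in the excerpt gives $d_{M_n}(Q,\widetilde{Z}) = \sigma(Q)$, so a fortiori every $P \in Z$ satisfies $\|Q - P\|_F \geq \sigma(Q)$.

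For the upper bound I would produce an explicit minimizer $P \in Z$ with $\|Q - P\|_F = \sigma(Q)$, using the spectral theorem adapted to the $\beta$-Hermitian setting: every $Q \in G_{\beta,n}$ can be written as $Q = U D U^{*}$ where $D$ is real diagonal and $U$ belongs to the appropriate compact group of $\beta$-unitary transformations ($O(n)$ for $\beta=1$, $U(n)$ for $\beta=2$, the compact symplectic group $Sp(n)$ for $\beta=4$). Let $\lambda_{i_0}$ be an eigenvalue of $Q$ of minimal modulus, so $|\lambda_{i_0}| = \sigma(Q)$, and let $D'$ be the diagonal matrix obtained from $D$ by replacing the entry $\lambda_{i_0}$ with $0$. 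Setting $P := U D' U^{*}$, I would verify three things: (i) $P$ is $\beta$-Hermitian (being conjugate by a $\beta$-unitary of a real diagonal matrix), hence $P \in G_{\beta,n}$; (ii) $\det(P) = 0$, so $P \in Z$ (for $\beta = 4$ one interprets $\det$ in the sense of Mehta, but the vanishing still holds since one of the diagonal entries of $D'$ is zero); (iii) the Frobenius norm is invariant under conjugation by a $\beta$-unitary, so
\[
\|Q - P\|_F = \|U(D - D')U^{*}\|_F = \|D - D'\|_F = |\lambda_{i_0}| = \sigma(Q).
\]
Combining the two inequalities yields the theorem.

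The only point that requires a bit of care is the quaternionic case $\beta = 4$, where one must invoke the appropriate spectral decomposition (each eigenvalue appearing with even multiplicity and the diagonalizing matrix lying in $Sp(n)$) and the compatible definition of determinant from \cite{Mehta}. I expect this to be the main (minor) obstacle, but it is routine once the right conventions are fixed; for $\beta = 1, 2$ the argument is entirely standard.
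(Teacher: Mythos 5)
Your two-inequality strategy is correct and is genuinely different from the paper's argument. The paper works purely variationally: it first shows, by a perturbation $\hat X \mapsto \hat X + \e v\bar v^T$, that any minimizer of $X\mapsto \|Q-X\|^2$ on $Z$ must have rank $n-1$, then applies Lagrange multipliers on the corank-one stratum, derives $QX=X^2$, deduces that $Q$ and $X$ commute and are hence simultaneously diagonalizable by $\beta$-unitaries, and finally reads off the minimum value from the diagonal picture. You instead sandwich: the lower bound $\|Q-P\|_F\ge\sigma(Q)$ for all singular $P$ (which, incidentally, follows even more directly than via the classical Eckart--Young black box: pick a unit vector $v$ in $\ker P$ and note $\|Q-P\|_F\ge\|(Q-P)v\|_2=\|Qv\|_2\ge\sigma(Q)$, an argument that is manifestly uniform in $\beta$ and sidesteps any worry about whether the full Eckart--Young theorem holds over $\mathbb{H}$), and the upper bound by exhibiting $P=UD'U^*\in Z$ attaining it via the $\beta$-spectral theorem. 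Your route is shorter and avoids the paper's need to justify that the corank-one stratum is smooth and to rule out higher-corank minimizers; the paper's route has the advantage of locating \emph{all} critical points of the distance function (which the authors use in the subsequent Remark to discuss higher-corank strata and multiplicity). One small imprecision in your writeup: for $\beta=4$ the $n\times n$ quaternion-Hermitian matrix has $n$ real eigenvalues when diagonalized by $Sp(n)$ over $\mathbb{H}$; the ``even multiplicity'' (Kramers degeneracy) is a feature of the associated $2n\times 2n$ complex representation, not of the $n\times n$ quaternionic picture in which the construction of $D'$ actually lives, so there is nothing extra to check there beyond fixing conventions, as you anticipated.
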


\begin{proof}Given $Q\in G_{\beta, n}$ invertible we consider the function:
$$f_Q:X\mapsto \|Q-X\|^2$$
and we look for a minimum on $Z$. We first prove that one such minimum must have rank $n-1.$ In fact let  $\hat{X}$ such that:
$$\textrm{rank}(\hat{X})\leq n-2\quad \textrm{and}\quad \|Q-\hat{X}\|^2=\min_{X\in Z}\|Q-X\|^2 .$$
For $\beta=1,2,4$ let $V_\beta$ be respectively $\RR, \CC$ and $\mathbb{H};$
then for every $v\in V_\beta^n$ of norm one we have $\textrm{rank}\left(\hat{X}-\e v\overline{b}^T\right)\leq n-1$ and:
$$\left\|Q-\hat{X}-\e v\overline{v}^T\right\|^2=\|Q-\hat{X}\|^2-2\e \left\langle Q-\hat{X},v\overline{v}^T\right\rangle+\e^2\geq\|Q-\hat{X}\|^2.$$
In particular $\e^2\geq 2\e \left\langle Q-\hat{X},v\overline{v}^T\right\rangle=2\e \textrm{tr}\left((Q-\hat{X})\overline{v}v^T\right)$ and since this holds for every $v$ of norm one, then $Q=\hat{X}$ which is impossible.\\
Thus we restrict to find minima of $f_Q$ on the \emph{smooth} stratum $Z_1$ of $Z$ where the corank is one: since the stratum is smooth we can use Lagrange multipliers rule. 
Over this stratum we have $(\nabla \textrm{det})_X=\alpha(X)$ (the adjoint matrix of $X$) and $(\nabla f_Q)_X=2(Q-X).$ 
Thus $X\in Z_1$ is a critical point of $f_Q$ if and only if for some $\lambda$ we have $\lambda (Q-X)=\alpha(X).$ In particular multiplying both sides of this equation by $X$ and using the identity $\alpha(X)X=\textrm{det}(X)\mathbbm{1}$ we get:
$$X\in\textrm{Crit}(f_Q|_{Z_1})\quad\textrm{implies}\quad QX=X^2.$$
In particular since $\overline{X}^T=X$ we get:
$$QX=X^2=\left(\overline{X}^T\right)^2=\overline{QX}^T=\overline{X}^T\overline{Q}^T=XQ$$
which says $Q$ and $X$ can be simultaneously diagonalized by $\beta$-unitary operators; since conjugation by such operators is an isometry in $G_{\beta, n}$, then we can assume both $Q$ and $X$ are already diagonal andx satisfy $QX=X^2$; it remains to compute the norm of $Q-X$ to detect minima. If $Q=\textrm{diag}(\lambda_1, \ldots, \lambda_n)$ and $X=(x_1, \ldots, x_n)$, then for every $k=1, \ldots, n$ we have $\lambda_kx_k=x_k^2$ and $\lambda_k, x_k\in \RR.$ Thus:
$$\|Q-X\|^2=\sum_{k=1}^n\lambda_k^2-x_k^2.$$
Now we already know that the matrix $X$ has rank $n-1$ and exactly one of the $x_k$ is zero, say $x_s$. In particular $\|Q-X\|^2=x_s^2$ and the minimum of $f_Q$ is attained when $x_s^2=\sigma(Q)^2.$

\end{proof}

\begin{remark}In fact the proof can be adapted to find critical points of $f_Q$ over a stratum of $Z$ where the corank is bigger, say $r$. Then a corresponding minimum is obtained by setting in the diagonal form of $Q$ the first $r$ singular values to zero. Also note that if $Q$ has multiple eigenvalues we can find several minima.
\end{remark}

\section{Gap probabilities}\label{sec:gap}
In this section we derive the explicit formula for $\displaystyle f'_{\beta,n}(0)=\lim_{\e\to0^+} f'_{\beta, n}(\e)$.
\begin{thm}\label{lemma:joint}

$$f_{\beta, n}'(0)=-2n\frac{C_\beta(n)}{C_\beta(n-1)}\mathbb{E}_{Q\in G_{\beta, n-1}}\left \{|\det(Q)|^\beta\right\} .$$ 

\end{thm}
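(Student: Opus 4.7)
The plan is to differentiate the integral representation
$$f_{\beta,n}(\varepsilon) = \int_{E_\varepsilon^n} F_{\beta,n}(\lambda)\,d\lambda, \qquad E_\varepsilon := (-\infty,-\varepsilon)\cup(\varepsilon,\infty),$$
directly using the explicit joint density \eqref{eigendensity}, and then identify the resulting boundary integral with an expectation for the $(n-1)$-ensemble.

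First, for $\varepsilon>0$ the domain $E_\varepsilon^n$ has $2n$ pieces of boundary, one from each condition $\lambda_j=\pm\varepsilon$. By the symmetry of $F_{\beta,n}$ in its arguments and the symmetry $\lambda_j\mapsto-\lambda_j$, all $2n$ boundary contributions are equal, so the moving-domain Leibniz rule yields
$$f'_{\beta,n}(\varepsilon) = -2n\int_{E_\varepsilon^{n-1}} F_{\beta,n}(\varepsilon,\lambda_2,\ldots,\lambda_n)\,d\lambda_2\cdots d\lambda_n.$$
Passing to the limit $\varepsilon\to 0^+$ by dominated convergence (using the Gaussian decay of $F_{\beta,n}$) gives
$$f'_{\beta,n}(0^+) = -2n\,\rho_1(0), \qquad \rho_1(0) := \int_{\RR^{n-1}} F_{\beta,n}(0,\lambda_2,\ldots,\lambda_n)\,d\lambda_2\cdots d\lambda_n,$$
namely $-2n$ times the one-eigenvalue marginal density evaluated at zero.

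The key remaining step is to relate $\rho_1(0)$ to the $(n-1)$-ensemble. Setting $\lambda_1=0$ in \eqref{eigendensity} factors the Vandermonde product as $\prod_{k=2}^n|\lambda_k|^\beta \cdot \prod_{2\leq j<k\leq n}|\lambda_k-\lambda_j|^\beta$, while the Gaussian factor reduces to the one for $n-1$ variables. Recognizing the product of the latter two factors (with the appropriate normalization) as $F_{\beta,n-1}$ produces the identity
$$F_{\beta,n}(0,\lambda_2,\ldots,\lambda_n) = \frac{C_\beta(n)}{C_\beta(n-1)}\Big(\prod_{k=2}^n|\lambda_k|^\beta\Big)\,F_{\beta,n-1}(\lambda_2,\ldots,\lambda_n).$$
Since $\prod_{k=2}^n|\lambda_k|^\beta=|\det Q|^\beta$ when $Q\in G_{\beta,n-1}$ has eigenvalues $\lambda_2,\ldots,\lambda_n$, integrating over $\RR^{n-1}$ gives $\rho_1(0) = \tfrac{C_\beta(n)}{C_\beta(n-1)}\,\EE_{Q\in G_{\beta,n-1}}\{|\det Q|^\beta\}$, and combined with the previous display this yields \eqref{gapexct}.

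The main obstacle I anticipate is the analytic justification of the differentiation under the integral sign on an unbounded, non-compact moving domain, together with the dominated-convergence passage as $\varepsilon\to 0^+$. Both are routine in spirit given the Gaussian factor in $F_{\beta,n}$ and the polynomial behavior of the Vandermonde near the origin, but they require an integrable majorant valid uniformly for all small $\varepsilon\geq 0$; the boundedness of $\rho_1$ near zero (manifest from the $|\det Q|^\beta$ weighting that appears there) is what ultimately makes such a majorant available.
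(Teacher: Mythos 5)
Your proposal is essentially the paper's own argument: both differentiate the integral over $((-\infty,-\e)\cup(\e,\infty))^n$ to get boundary terms, pass to $\e\to 0^+$ by dominated convergence, and then factor $F_{\beta,n}(\hat\lambda,0)=\tfrac{C_\beta(n)}{C_\beta(n-1)}|\lambda_1\cdots\lambda_{n-1}|^\beta F_{\beta,n-1}(\hat\lambda)$. The only cosmetic difference is that the paper keeps both $\pm\e$ boundary contributions (an expression of the form $-n\int(F_{\beta,n}(\hat\lambda,-\e)+F_{\beta,n}(\hat\lambda,\e))\,d\hat\lambda$) until the limit, whereas you collapse them to $-2n\int F_{\beta,n}(\e,\hat\lambda)\,d\hat\lambda$ upfront; for that you should invoke the full reflection $\lambda\mapsto-\lambda$ (of all coordinates) together with the symmetry of the domain, since $F_{\beta,n}$ is not invariant under flipping a single $\lambda_j$.
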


\begin{proof}

We make use of the important and well-known formula \cite[Ch. 3]{Mehta} for the joint p.d.f. $F_{\beta,n}(\lambda)$ of
the eigenvalues $\lambda = ( \lambda_1, \lambda_2,..,\lambda_n )$ of a random matrix in the $G_{\beta,n}$ ensemble:
$$F_{\beta,n}(\lambda) = C_\beta(n) \exp\left(-\frac{\beta}{2} \sum_{j=1}^n \lambda_j^2 \right) \prod_{j,k \in [1,n]} |\lambda_k - \lambda_j |^{\beta/2},$$
where
\begin{equation}\label{densityconstant}C_\beta(n) = (2\pi)^{-n/2} \beta^{n(n-1)\beta/4 + n/2} \prod_{j=1}^{n} \frac{\Gamma(1+\beta/2)}{\Gamma(1+j\beta/2)} .\end{equation}

By definition, $\f(\e)$ is an integral of $F_\beta(\lambda)$
over the set where all eigenvalues have absolute value at least $\e$, i.e.
\begin{equation}\label{gapderivative} \f(\e) = \PP \{ \sigma(Q) \geq \e \} =  \int_{\left((-\infty, \e)\cup(\e,+\infty)\right)^n} F_{\beta,n}(\lambda) d\lambda, \end{equation}
Next differentiate both sides of this equation with respect to $\e$.
Since $\e$ only appears in the limits of integration (and not in the integrand),
this simply produces boundary integrals; by symmetry under permutations of the variables, these $n$ integrals are equal, so we just multiply the last one by $n$
(here we have used $\hat{\lambda}$ to denote the first $n-1$ entries in $\lambda$):
\begin{equation}\label{star} \f'(\e) = - n \int_{\left((-\infty, -\e)\cup(\e,+\infty)\right)^{n-1}} \left(F_{\beta,n}(\hat{\lambda}, -\e)+F_{\beta, n}( \hat{\lambda},\e) \right)d\hat{\lambda}.\end{equation}
For example, let us consider the case $n=2$; we have: 
\begin{align*}f_{\beta, 2}(\e)&=\left(\int_{-\infty}^{-\e}\int_{-\infty}^{-\e}+\int_{-\infty}^{-\e}\int_{\e}^\infty+\int_{\e}^{\infty}\int_{\e}^\infty+\int_{\e}^{\infty}\int_{-\infty}^{-\e}\right)F_{\beta, 2}(\lambda_1,\lambda_2)d\lambda_1d\lambda_2\\
&=\left(\int_{-\infty}^{-\e}\int_{-\infty}^{-\e}-\int_{-\infty}^{-\e}\int_{\infty}^\e+\int_{\infty}^{\e}\int_{\infty}^\e-\int_{\infty}^{\e}\int_{-\infty}^{-\e}\right)F_{\beta, 2}(\lambda_1,\lambda_2)d\lambda_1d\lambda_2
\end{align*}
Thus taking derivative  the fundamental Theorem of calculus implies:\begin{small}
 \begin{align*}
 f'_{\beta, 2}(\e)=&-\int_{-\infty}^{-\e} F_{\beta, 2}(-\e, \lambda_2)d\lambda_2-\int_{-\infty}^{-\e} F_{\beta, 2}(\lambda_1, -\e)d\lambda_1+\int_{\infty}^\e F_{\beta, 2}(-\e, \lambda_2)d\lambda_2-\int_{-\infty}^{-\e} F_{\beta, 2}(\lambda_1, \e)d\lambda_2\\
 &+\int_{\infty}^\e F_{\beta, 2}(\e, \lambda_2)d\lambda_2+\int_{\infty}^\e F_{\beta, 2}( \lambda_1, \e)d\lambda_1-\int_{-\infty}^{-\e}F_{\beta, 2}(\e, \lambda_2)\lambda_2+\int_{\infty}^\e F_{\beta, 2}(\lambda_1, -\e)d\lambda_1\\
 =&-\int_{-\infty}^{-\e} F_{\beta, 2}(-\e, \lambda_2)d\lambda_2-\int_{-\infty}^{-\e} F_{\beta, 2}(\lambda_1, -\e)d\lambda_1-\int_{\e}^\infty F_{\beta, 2}(-\e, \lambda_2)d\lambda_2-\int_{-\infty}^{-\e} F_{\beta, 2}(\lambda_1, \e)d\lambda_2\\
 &-\int_{\e}^\infty F_{\beta, 2}(\e, \lambda_2)d\lambda_2-\int_{\e}^\infty F_{\beta, 2}( \lambda_1, \e)d\lambda_1-\int_{-\infty}^{-\e}F_{\beta, 2}(\e, \lambda_2)\lambda_2-\int_{\e}^\infty F_{\beta, 2}(\lambda_1, -\e)d\lambda_1=(*)
 \end{align*}
 \end{small}
 Collecting together and using the invariance under permutations of the variables of $F_{\beta,2}$, we can rewrite the above expression as:
 \begin{small}
 \begin{align*}
(*)=& \int_{(-\infty, -\e)\cup(\e, +\infty)}F_{\beta, 2}(-\e, \lambda_2)+F_{\beta, 2}(\e, \lambda_2)d\lambda_2-\int_{(-\infty, -\e)\cup(\e, +\infty)}F_{\beta, 2}(\lambda_1, -\e)+F_{\beta, 2}( \lambda_1, \e)d\lambda_1\\
 &=-2\int_{(-\infty, -\e)\cup(\e, +\infty)}F_{\beta, 2}(-\e, \lambda_2)+F_{\beta, 2}(\e, \lambda_2)d\lambda_2
 \end{align*}
\end{small}
Taking the limit $\e \ra 0$ in the general case \eqref{star}, the dominated convergence Theorem gives:
\begin{equation}
\label{eq:faces}
\lim_{\e \ra 0} \f'(\e) = -2 n \int_{\mathbb{R}^{n-1}} F_{\beta,n}(\hat{\lambda},0) d\hat{\lambda} .
\end{equation}
The integrand $F_{\beta,n}(\hat{\lambda},0)$ can be rearranged in an interesting way
that expresses it in terms of the density $F_{\beta,n-1}(\hat{\lambda})$
and the determinant $|\lambda_1 \cdot \lambda_2 \cdots \lambda_{n-1}|$:

\begin{align*}
F_{\beta,n}(\hat{\lambda},0) &= C_\beta(n) \exp\left(-\frac{\beta}{2} \sum_{j=1}^{n-1} \lambda_j^2 \right) \left\{ \prod_{j,k \in [1,n]}  |\lambda_k - \lambda_j |^{\beta/2} \right\}_{\lambda_n=0} \\
&= C_\beta(n) \exp\left(-\frac{\beta}{2} \sum_{j=1}^{n-1} \lambda_j^2 \right) 
\left\{ \prod_{j,k \in [1,n-1]}  |\lambda_k - \lambda_j |^{\beta/2} \right\} |\lambda_1 \cdot \lambda_2 \cdots \lambda_{n-1}|^{\beta} \\
&= \frac{C_\beta(n)}{C_\beta(n-1)} |\lambda_1 \cdot \lambda_2 \cdots \lambda_{n-1}|^{\beta} F_{\beta,n-1}(\hat{\lambda})
\end{align*}

Substituting this into (\ref{eq:faces}) finally leads to an expression for 
$\lim_{\e \ra 0} f_{\beta,n}'(\e)$ in terms of the Mellin transform:
\begin{align*}
\lim_{\e \ra 0} \f'(\e) &= -2n \frac{C_\beta(n)}{C_\beta(n-1)}\int_{\mathbb{R}^{n-1}} |\lambda_1 \cdot \lambda_2 \cdots \lambda_{n-1}|^{\beta} F_{\beta,n-1}(\hat{\lambda})  d\hat{\lambda} \\
&= -2n\frac{C_\beta(n)}{C_\beta(n-1)}\mathbb{E}_{Q\in G_{\beta, n-1}}\left \{|\det(Q)|^\beta\right\}
\end{align*}

\end{proof}

\section{The intrinsic volume of $\Sigma_{\beta,n}$}\label{sec:volume}

In this section we compute the intrinsic volume (induced by the Frobenius norm) of the set:
$$\Sigma_{\beta, n}=\{Q\in G_{\beta, n}\, : \|Q\|^2=1 \textrm{ and } \det(Q)=0\}.$$

Let us recall that the Mellin transform $\Mel(\beta,s)$ \cite{Mehta}
provides the moments of the determinant of a random matrix from the $\beta$-ensemble.
Namely,
\begin{equation}\label{eq:mellin}\Mellin(\beta,s) = \frac{1}{2} \EE_{\beta,n} |\det|^{s-1} = \frac{1}{2} \EE_{\beta,n} |\lambda_1 \cdot \lambda_2 \cdots \lambda_n|^{s-1},\end{equation}
where we have used the notation $\EE_{\beta,n}$ to emphasize that the expectation is taken using the probability density associated to $G_{\beta, n}$.
In terms of this, we state a precise formula for the volume  of $\Sigma_{\beta, n}$.

\begin{thm}\label{thm:golden}
For $\beta=1,2,4$:
$$|\Sigma_{\beta, n}| = 2n  \sqrt{2\pi}\frac{C_\beta(n)}{C_\beta(n-1)} \Mel(\beta,\beta+1)\cdot |S^{N_\beta-2}| ,$$
\end{thm}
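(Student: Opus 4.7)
The plan is to combine the Eckart--Young Theorem \ref{Eck-You} with the coarea formula, and then substitute Theorem \ref{lemma:joint}. Since Theorem \ref{Eck-You} identifies the least singular value $\sigma(Q)$ with the Frobenius distance from $Q$ to $Z=\det^{-1}(0)$, the function $\sigma$ is $1$-Lipschitz with $|\nabla\sigma|\equiv 1$ on the smooth part of every positive level set $\sigma^{-1}(t)$. Applying the coarea formula to the Gaussian density $\rho(Q)=(2\pi)^{-N_\beta/2}e^{-\|Q\|^2/2}$ yields
$$\PP\{\sigma(Q)\le\e\}=\int_0^\e\int_{\sigma^{-1}(t)}\rho(Q)\,d\mathcal{H}^{N_\beta-1}\,dt,$$
so differentiating at $\e=0^+$ identifies
$$-f'_{\beta,n}(0)=\lim_{t\to0^+}\int_{\sigma^{-1}(t)}\rho(Q)\,d\mathcal{H}^{N_\beta-1}.$$

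For small $t>0$, $\sigma^{-1}(t)$ is locally a double cover of the smooth part of $Z$: the two parallel sheets correspond to the smallest eigenvalue of $Q$ being $\pm t$. Passing to the limit (using that the singular stratum of $Z$ has codimension three and contributes negligibly), I parametrize $Z=\mathbb{R}_+\cdot\Sigma_{\beta,n}$ by polar coordinates $(r,P)$, for which the induced Hausdorff measure factorizes as $r^{N_\beta-2}\,dr\,dP$. This gives
$$-f'_{\beta,n}(0)=2\int_Z\rho\,d\mathcal{H}^{N_\beta-1}=2|\Sigma_{\beta,n}|\cdot\int_0^\infty\frac{r^{N_\beta-2}e^{-r^2/2}}{(2\pi)^{N_\beta/2}}\,dr=|\Sigma_{\beta,n}|\cdot\frac{\Gamma((N_\beta-1)/2)}{\sqrt{2}\,\pi^{N_\beta/2}},$$
where the last step uses the standard identity $\int_0^\infty r^{a-1}e^{-r^2/2}\,dr=2^{a/2-1}\Gamma(a/2)$.

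Solving for $|\Sigma_{\beta,n}|$ and substituting the formula from Theorem \ref{lemma:joint} together with $\EE_{G_{\beta,n-1}}\{|\det|^\beta\}=2\,\Mel(\beta,\beta+1)$ from the definition \eqref{eq:mellin} produces
$$|\Sigma_{\beta,n}|=\frac{\sqrt{2}\,\pi^{N_\beta/2}}{\Gamma((N_\beta-1)/2)}\cdot 4n\,\frac{C_\beta(n)}{C_\beta(n-1)}\,\Mel(\beta,\beta+1).$$
The identity $|S^{N_\beta-2}|=2\pi^{(N_\beta-1)/2}/\Gamma((N_\beta-1)/2)$ rewrites the prefactor as $2n\sqrt{2\pi}\,|S^{N_\beta-2}|$, yielding the claimed expression. (As a consistency check, the intermediate formula announced in the introduction, $|\Sigma_{\beta,n}|=|S^{N_\beta-1}|\cdot\frac{\sqrt{2}\,\Gamma(N_\beta/2)}{\Gamma((N_\beta-1)/2)}\cdot\frac{-f'_{\beta,n}(0)}{2}$, falls out from the same equation by expanding $|S^{N_\beta-1}|$ in Gamma functions.)

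The main obstacle is justifying the passage to the limit $\int_{\sigma^{-1}(t)}\rho\to 2\int_Z\rho$ as $t\to 0^+$. Away from the singular stratum of $Z$, the function $\sigma$ is smooth with $|\nabla\sigma|=1$ and its level sets foliate a tubular neighborhood of $Z$, so convergence there is immediate via the implicit function theorem. Near the codimension-three singular stratum one must argue that the $(N_\beta-1)$-dimensional Hausdorff mass of $\sigma^{-1}(t)$ on a shrinking tube about that stratum vanishes as $t\to 0^+$; this follows from a dimensional count using the rank stratification of $Z$ together with the boundedness of the Gaussian weight $\rho$.
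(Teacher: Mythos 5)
Your argument is correct and reaches the same formula, but the route is genuinely different from the paper's. You differentiate the coarea formula for the $1$-Lipschitz distance function $\sigma$, pass to the limit of the surface integrals over $\sigma^{-1}(t)$ to obtain $2\int_Z\rho\,d\mathcal{H}^{N_\beta-1}$, and then use the cone structure of $Z$ in polar coordinates to extract $|\Sigma_{\beta,n}|$; that is, you go from $f'_{\beta,n}(0)$ forward to $|\Sigma_{\beta,n}|$. The paper instead goes backward: Proposition \ref{prop:gap} first establishes $|\Sigma_{\beta,n}|$ as the derivative of a \emph{spherical} tube volume, equal to $|S^{N_\beta-1}|\cdot\tfrac{1}{2}(-g'(0))$ for the conic probability $g(\e)=\PP\{\sigma(Q)\ge\e\|Q\|\}$, and then Lemma \ref{lemma:cylcone} relates $g'(0)$ to $f'(0)$ by integrating the Gaussian out in the radial direction with a clean dominated-convergence argument. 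Both proofs exploit the same three ingredients --- Eckart--Young, rotational invariance of the Gaussian, and the homogeneity of $\sigma$ / conic structure of $Z$ --- but your version condenses the geometry into one surface-integral limit, whereas the paper's splits it into two lemmas so that the only limiting argument needed is a dominated-convergence step on the half-line. The trade-off is that the step you flag yourself, the convergence $\int_{\sigma^{-1}(t)}\rho\to 2\int_Z\rho$ near the singular stratum of $Z$, is exactly what the paper's Proposition \ref{prop:gap} plus Lemma \ref{lemma:cylcone} are engineered to circumvent: by first passing to the sphere and comparing tubes (where Gray's tube formula is invoked) and then integrating radially, the paper never has to argue directly about Hausdorff mass of a level set of $\sigma$ concentrating on a two-sided cover of a singular cone. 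Your dimensional count near the singular stratum is plausible and the Gaussian weight makes everything integrable, but filling in that argument would cost roughly as much work as the paper's Lemma \ref{lemma:cylcone}. As a minor point, the arithmetic checks out: with $|S^{N_\beta-2}|=2\pi^{(N_\beta-1)/2}/\Gamma(\tfrac{N_\beta-1}{2})$ one has $\frac{4\sqrt{2}\pi^{N_\beta/2}}{\Gamma((N_\beta-1)/2)}=2\sqrt{2\pi}\,|S^{N_\beta-2}|$, and your intermediate identity agrees with the one displayed in the Introduction.
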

where $N_{\beta,n}=\dim G_{\beta, n}=n+\frac{1}{2}n(n-1)\beta$, and $C_\beta(n)$ is defined in (\ref{densityconstant}).
\begin{remark}
Plugging in the exact values for the normalization constants we have: 
$$ \frac{C_\beta(n)}{C_\beta(n-1)} = \frac{\beta^{(n-1)\beta/2+1/2}\Gamma(1+\beta/2)}{\sqrt{2\pi} \Gamma(1+\beta n/2)}$$
which in turn implies:
$$|\Sigma_{\beta, n}|=2n \beta^{\frac{n\beta -\beta +1}{2}}\frac{\Gamma(1+\beta/2)}{\Gamma(1+\beta n/2)}\Mel(\beta, \beta+1)\cdot |S^{N_\beta-2}|.$$
\end{remark}

We prove this using Theorem \ref{Eck-You} along with several lemmas. We first reduce the problem to some Random Matrix Theory computations. To avoid cumbersome notation, let us suppress the dependence on $\beta$ and $n$ for the next two statements.

\begin{prop}\label{prop:gap} $$|\Sigma|=|S^{N-1}|\cdot\lim_{\e\to 0}\frac{\PP\{\sigma(Q)\leq \e \|Q\|\}}{2\e}$$
\end{prop}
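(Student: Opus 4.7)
The plan is to interpret the event $\{\sigma(Q)\leq \e\|Q\|\}$ geometrically as the spherical slice of a cone, and then identify this slice with a first-order $\e$-tube around $\Sigma$ in $S^{N-1}$, to which the classical hypersurface tube formula applies.

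First I would apply Theorem \ref{Eck-You} to rewrite $\{\sigma(Q)\leq \e\|Q\|\}$ as the set of matrices whose Frobenius distance to the cone $Z = \{\det=0\}\subset G_{\beta,n}$ is at most $\e\|Q\|$. Since $Z$ is a linear cone through the origin and both $\sigma$ and $\|\cdot\|$ are homogeneous of degree one, this set is scale-invariant, so
$$\{\sigma(Q)\leq \e\|Q\|\}\,\cap\, S^{N-1} \;=\; U_\e := \{v\in S^{N-1}\,:\,\sigma(v)\leq \e\}.$$
Under the Gaussian law on $G_{\beta,n}$ the direction $Q/\|Q\|$ is uniform on $S^{N-1}$ and independent of $\|Q\|$; hence
$$\PP\{\sigma(Q)\leq \e\|Q\|\} \;=\; \frac{|U_\e|}{|S^{N-1}|}.$$
Plugging this into the right-hand side of the proposition reduces the statement to proving
$$\lim_{\e\to 0}\frac{|U_\e|}{2\e} \;=\; |\Sigma|.$$

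Next, by Eckart-Young again, $U_\e$ is the trace on the sphere of the ambient Frobenius $\e$-tube of $Z$, i.e.\ the Euclidean $\e$-tube of $\Sigma$ intersected with $S^{N-1}$. For $v\in S^{N-1}$ close to $\Sigma$, the nearest singular matrix $w$ satisfies $\|w\|=\sqrt{1-\sigma(v)^2}=1+O(\e^2)$ (this can be read directly from the diagonal normal form in the proof of Theorem \ref{Eck-You}), so $w/\|w\|\in\Sigma$ and the spherical distance from $v$ to $\Sigma$ equals the Euclidean one up to $O(\e^3)$. Consequently $U_\e$ agrees with the intrinsic spherical $\e$-tube of $\Sigma$ to first order in $\e$. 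The classical tube formula for a codimension-one submanifold of a Riemannian manifold then yields $|U_\e|=2\e|\Sigma|+O(\e^2)$, which is exactly what is needed.

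The main technical point is handling the singular locus of $\Sigma$: for $n\geq 3$, $\Sigma$ is not smooth at matrices of corank $\geq 2$. These form a semialgebraic set of codimension three in $G_{\beta,n}$, hence codimension two in $\Sigma$, so the Euclidean $\e$-tube around them has volume $O(\e^3)$ and is absorbed into the error. Thus the tube formula may be applied on the smooth stratum without affecting the first-order asymptotic. This stratified tube argument (together with the first-order agreement between Euclidean and spherical tubes on $S^{N-1}$) is the only non-automatic step; everything else is bookkeeping of the scale-invariance and the uniformity of $Q/\|Q\|$.
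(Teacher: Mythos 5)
Your proposal is correct and follows essentially the same route as the paper: both use Theorem~\ref{Eck-You} to recast $\{\sigma(Q)\leq\e\|Q\|\}$ as (the cone over) the spherical trace $U_\e$ of the ambient Frobenius $\e$-tube of $Z$, use isotropy of the Gaussian under the Frobenius norm to convert the probability into the normalized volume $|U_\e|/|S^{N-1}|$, and then show $|U_\e|$ and the intrinsic spherical $\e$-tube of $\Sigma$ differ by a quantity absorbed in the $o(\e)$ error before invoking the tube formula. The only cosmetic difference is that the paper phrases the last comparison as a sandwich $v(\e)\leq\hat v(\e)\leq v(\e+\e^2)$ rather than tracking the nearest point $w/\|w\|$ as you do, and it relegates the issue of the singular stratum to the introduction rather than addressing it in the proof; your estimate $d_{S^{N-1}}(v,\Sigma)\leq\arcsin\sigma(v)=\sigma(v)+O(\sigma(v)^3)$ is in fact slightly sharper than the paper's $\e+\e^2$ bound, though both suffice.
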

\begin{proof}
Since $\Sigma$ is an algebraic subset of the sphere $S^{N-1}$ of codimension one, then its intrinsic volume is computed by: $$|\Sigma|=\lim_{\e\to 0} \frac{|\mathcal{U}_{S^{N-1}}(\Sigma, \e)|}{2\e},$$
where $\mathcal{U}_{S^{N-1}}(\Sigma, \e)$ is an $\e$-tube aorund $\Sigma$ in $S^{N-1}$ (i.e. the set of points in $S^{N-1}$ at distance less than $\e$ from $\Sigma$). 

Consider now the functions $v(\e)=|\mathcal{U}_{S^{N-1}}(\Sigma,\e)|$ and $\hat{v}(\e)=|\mathcal{U}_G(Z, \e)\cap S^{N-1}|$ (where $\mathcal{U}_G(Z, \e)$ is an $\e$-tube of $Z$ in $G$). We will prove that these functions have the same derivative at zero. First notice that if $d_{S^{N-1}}(X, \Sigma)\leq \e$ then also $d_{G}(X, Z)\leq \e$; hence:
\begin{equation}\label{vol1}\mathcal{U}_{S^{N-1}}(\Sigma, \e)\subset \mathcal{U}_G(Z, \e)\cap S^{N-1}.\end{equation}
Assume now that $d_{M}(X, Z)\leq \e$ for $X\in S^{N-1}.$ Then the geodesic joining $X$ to $\Sigma$ is an ``arc'' on the sphere and by the triangle inequality $d_{S^{N-1}}(X, \Sigma)\leq d_G(X, Z) + 1-\cos (d_G(X, Z))\leq \e +\e^2$ (the last inequality for $\e$ small enough). In particular we get the inclusion:
\begin{equation}\label{vol2}\mathcal{U}_G(Z, \e)\cap S^{N-1}\subset \mathcal{U}_{S^{N-1}}(\Sigma, \e+\e^2).\end{equation}
Combining (\ref{vol1}) and (\ref{vol2}) gives $v(\e) \leq \hat{v}(\e) \leq v(\e+\e^2),$ which in turn implies $\lim_{\e\to 0}\frac{v(\e)}{2\e}=\lim_{\e\to0}\frac{\hat{v}(\e)}{2\e}.$
In particular this implies that $|\Sigma|$ is also computed by:
\begin{equation}\label{eqvolp}|\Sigma|=\lim_{\e\to 0} \frac{|\mathcal{U}_G(Z, \e)\cap S^{N-1}|}{2\e}.\end{equation}
We apply now Theorem \ref{Eck-You} getting that:
$$\mathcal{U}_G(Z, \e)\cap S^{N-1}=\{\sigma(Q)\leq \e\}\cap S.$$
Since the probability distribution on the ensemble $G$ is uniform on the unit sphere, then the volume of $\{\sigma(Q)\leq \e\}\cap S$ equals $|S^{N-1}|$ times the probability of the cone generated by $\{\sigma(Q)\leq \e\}\cap S^{N-1}$; in other words:
$$|\mathcal{U}_G(Z,\e)\cap S^{N-1}|=|S^{N-1}|\cdot\mathbb{P}\{\sigma(Q)\leq \e \|Q\|\}.$$
\end{proof}

In the following, we have the least singular value in mind for the function $\sigma(Q)$,
but we state the Lemma in more generality.

\begin{lemma}\label{lemma:cylcone}
Fix $N$ and for $Q \in \RR^N$ suppose $\sigma(Q)$ is a continuous positive function that is homogeneous of degree one,
so $\sigma(Q) = \| Q \| \sigma(Q / \| Q \|) $.
Define:
$$f(\e) = \PP \{ \sigma(Q) \geq \e \}\quad
\textrm{and} \quad g(\e) = \PP \{ \sigma(Q) \geq \e \| Q \| \}. $$
Then,
$$\lim_{\e \ra 0} - g'(\e) = \left( \lim_{\e \ra 0} - f'(\e) \right) \frac{\sqrt{2} \Gamma(\frac{N}{2})}{\Gamma(\frac{N-1}{2})} .$$ 
\end{lemma}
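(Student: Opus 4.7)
The plan is to exploit the rotational invariance of the Gaussian measure to split $Q$ into independent radial and angular parts, and then use the homogeneity of $\sigma$ to reduce $f$ to an expectation of $g$. Write $Q = r\omega$ with $r = \|Q\|$ and $\omega = Q/\|Q\|$. Under the standard Gaussian law on $\RR^N$ (our Frobenius-normalized $\beta$-ensemble), $\omega$ is uniformly distributed on $S^{N-1}$, $r$ has the chi density
$$p(r) = \frac{r^{N-1} e^{-r^2/2}}{2^{N/2-1}\,\Gamma(N/2)}, \qquad r>0,$$
and the two are independent. By the degree-one homogeneity of $\sigma$ we have $\sigma(Q) = r\,\sigma(\omega)$, so the events defining $g$ and $f$ translate respectively to $\{\sigma(\omega)\ge \e\}$ and $\{\sigma(\omega)\ge \e/r\}$. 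Conditioning on $r$ gives the key identity
$$f(\e) \;=\; \int_{0}^{\infty} p(r)\, g(\e/r)\, dr.$$

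Next, form the difference quotient and change variables inside the integrand:
$$\frac{1 - f(\e)}{\e} \;=\; \int_{0}^{\infty} \frac{p(r)}{r}\cdot\frac{1-g(\e/r)}{\e/r}\, dr.$$
For each fixed $r>0$ the inner factor tends to $-g'(0^+)$ as $\e\to 0^+$. Granting (for a moment) the interchange of limit and integration, we obtain
$$-f'(0^+) \;=\; \bigl(-g'(0^+)\bigr)\int_{0}^{\infty}\frac{p(r)}{r}\, dr.$$
The remaining integral is a standard Gamma integral: the substitution $u=r^2/2$ gives
$$\int_{0}^{\infty}\frac{p(r)}{r}\,dr \;=\; \frac{1}{2^{N/2-1}\Gamma(N/2)}\int_{0}^{\infty} r^{N-2}e^{-r^2/2}\,dr \;=\; \frac{\Gamma((N-1)/2)}{\sqrt{2}\,\Gamma(N/2)}.$$
Inverting this multiplicative factor yields exactly the stated formula.

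The only delicate step is the exchange of limit and integral, and this is the step I would present carefully. Since $g$ is monotone nonincreasing with a well-defined one-sided derivative at $0$, there exist constants $C, t_0 > 0$ such that $1-g(t) \le C t$ for $t\in (0,t_0]$ (giving $(1-g(t))/t \le C$), while for $t > t_0$ one has the crude bound $(1-g(t))/t \le 1/t_0$. Hence $(1-g(t))/t$ is uniformly bounded on $(0,\infty)$ by some constant $K$. The dominating function $K\cdot p(r)/r$ is integrable on $(0,\infty)$ provided $N \geq 2$, since $p(r)/r \sim r^{N-2}$ near the origin and decays as a Gaussian at infinity. Dominated convergence then justifies passing the limit inside, completing the proof.
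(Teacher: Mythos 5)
Your proof is correct, and it follows the same overall strategy as the paper's: pass to polar coordinates to establish the key identity $f(\e)=\int_0^\infty p(r)\,g(\e/r)\,dr$, then pass the limit under the integral sign via dominated convergence, and finally evaluate the resulting Gamma integral. The one place where you diverge—and it is a genuine refinement—is the justification of the interchange. The paper differentiates under the integral sign to get $f'(\e)=\int_0^\infty p(r)\,r^{-1}g'(\e/r)\,dr$ (a step that itself needs a brief justification) and then dominates via the claim that ``$g'$ is continuous with limits at $0$ and $\infty$, hence bounded.'' You instead work with the one-sided difference quotient $\frac{1-f(\e)}{\e}=\int_0^\infty\frac{p(r)}{r}\cdot\frac{1-g(\e/r)}{\e/r}\,dr$, avoiding differentiation under the integral altogether, and dominate $\frac{1-g(t)}{t}$ using only that $g$ is monotone nonincreasing with a finite one-sided derivative at $0$. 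This is more elementary, relies on strictly weaker regularity, and is somewhat tighter than the paper's wording. One small caveat: your conclusion is phrased in terms of $g'(0^+)$ and $f'(0^+)$ (one-sided derivatives at $0$), whereas the lemma asserts equality of $\lim_{\e\to 0}g'(\e)$ and $\lim_{\e\to 0}f'(\e)$. These coincide here because $g$ is continuous at $0$ and the limit of $g'$ exists (a standard mean-value-theorem consequence), but you should make the identification explicit since the lemma's statement is about limits of derivatives, not difference quotients.
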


\begin{proof}
First we establish the equation:
\begin{equation}\label{eq:f}
f(\e) = \frac{\text{Vol}(S^{N-1})}{(2 \pi)^{N/2}} \int_0^\infty g(\e / r) r^{N-1} e^{-\frac{r^2}{2}} dr.
\end{equation}

Starting from the definition for $f$, we have:

\begin{align*}
f(\e) &= \frac{1}{(2 \pi)^{N/2}}  \int_0^\infty \int_{S^{N-1}} \chi_{\{ \sigma(Q) \geq \e \}}  r^{N-1} e^{-\frac{r^2}{2}} d\theta dr \\
&= \frac{1}{(2 \pi)^{N/2}}  \int_0^\infty \underbrace{\int_{S^{N-1}} \chi_{\{ \sigma(Q) \geq \e \}} d\theta}_{\text{Vol}(S^{N-1}) g(\e/r)} r^{N-1} e^{-\frac{r^2}{2}} dr \\
&=\frac{\text{Vol}(S^{N-1})}{(2 \pi)^{N/2}}  \int_0^\infty  g(\e/r) r^{N-1} e^{-\frac{r^2}{2}} dr
.
\end{align*}

This proves (\ref{eq:f}).
Differentiating (\ref{eq:f}), we get:
\begin{equation}\label{eq:fprime}
f'(\e) = \frac{\text{Vol}(S^{N-1})}{(2 \pi)^{N/2}} \int_0^\infty g'(\e / r) r^{N-2} e^{-\frac{r^2}{2}} dr.
\end{equation}

Next we take the limit $\e \ra 0$ and apply the dominated convergence theorem. 
Note that for each $\e$, $g'(\e/r)$ as a function of $r$ is continuous and has a limit as $\e \ra 0$ and $\e \ra +\infty$.
This implies that $|g'(\e/r)|$ is uniformly bounded by say $M$.
Thus, for all $\e>0$, the integrands in (\ref{eq:fprime})
are all dominated by the integrable function $M r^{N-2} e^{-\frac{r^2}{2}}$.
This justifies the following:

\begin{align*}
\lim_{\e \ra 0} \int_0^\infty g'(\e / r) r^{N-2} e^{-\frac{r^2}{2}} dr 
&= \int_0^\infty \lim_{\e \ra 0} g'(\e / r) r^{N-2} e^{-\frac{r^2}{2}} dr \\
&= \int_0^\infty \lim_{\e \ra 0} g'(\e) r^{N-2} e^{-\frac{r^2}{2}} dr \\
&=  \lim_{\e \ra 0} g'(\e) \int_0^\infty r^{N-2} e^{-\frac{r^2}{2}} dr .
\end{align*}

Applying this while taking the limit $\e \ra 0$ in (\ref{eq:fprime}) yields the statement in the Lemma:
\begin{align*}
\lim_{\e \ra 0} f'(\e) &=   \left( \lim_{\e \ra 0} g'(\e) \right) \underbrace{ \frac{\text{Vol}(S^{N-1})}{(2 \pi)^{N/2}} \int_0^\infty r^{N-2} e^{-\frac{r^2}{2}} dr}_{\frac{\Gamma(\frac{N-1}{2})}{\sqrt{2} \Gamma(\frac{N}{2})}} \\
&= \left( \lim_{\e \ra 0} g'(\e) \right) \frac{\Gamma(\frac{N-1}{2})}{\sqrt{2} \Gamma(\frac{N}{2})}.
\end{align*}\end{proof}

We are finally in the position to give the proof of Theorem \ref{thm:golden}, which in fact is just given by the following chain of equalities:
\begin{align*}
|\Sigma_{\beta, n}| &= |S^{N_\beta-1}|\cdot \lim_{\e\to 0}\frac{1-\PP\{\sigma(Q)\geq \e \|Q\|\}}{2\e}=|S^{N_\beta-1}|\cdot \lim_{\e\to 0}\frac{1-g(\e)}{2\e}\quad(\textrm{by Proposition \ref{prop:gap}})\\
&=|S^{N_\beta-1}|\cdot \lim_{\e\to 0}\frac{-g'(\e)}{2}=|S^{N_\beta-1}|\frac{\sqrt{2}\Gamma(\frac{N_\beta}{2})}{\Gamma(\frac{N_\beta-1}{2})}\cdot \frac{1}{2}\left( \lim_{\e \ra 0} - f'(\e) \right)\quad(\textrm{by Lemma \ref{lemma:cylcone}})\\
&= \underbrace{2n|S^{N_\beta-1}|\frac{\sqrt{2}\Gamma(\frac{N_\beta}{2})}{\Gamma(\frac{N_\beta-1}{2})}}_{2n\sqrt{2\pi}|S^{N_\beta-2}|} \frac{C_\beta(n)}{C_\beta(n-1)}\Mel(\beta,\beta+1) \quad(\textrm{by Theorem \ref{lemma:joint} and \eqref{eq:mellin}}).
\end{align*}

\subsection{Explicit expressions for the constants and asymptotic analysis of Theorem \ref{thm:golden}}

The following lemma is a combination of Prop. 7 and Cor. 3 from \cite{GayetWelschinger3} and gives the exact formula for $\Mel(1,2)$ together with its asymptotic behavior.

\begin{lemma}\label{b1}
\begin{equation*}
\Mel(1,2) = \frac{1}{2}\left\{
\begin{array}{cc}
\frac{2\sqrt{2}}{\pi}\Gamma\left(\frac{n+1}{2}\right) & \textrm{for even $n$},\\
(-1)^m\frac{(n-1)!}{m!2^{n-1}}+(-1)^{m-1}\frac{4\sqrt{2}(n-1)!}{\sqrt{\pi}m!2^{n-1}}\sum_{k=0}^{m-1}(-1)^k\frac{\Gamma(k+3/2)}{k!} & \textrm{for odd $n=2m+1$}\\
\end{array} \right.
\end{equation*}
Moreover as $n$ goes to infinity (regardless its parity): $$\Mel(1,2)\sim \frac{\sqrt{2}}{\pi}\Gamma\left(\frac{n+1}{2}\right).$$
\end{lemma}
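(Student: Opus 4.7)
The exact closed forms are obtained by combining Proposition~7 and Corollary~3 of \cite{GayetWelschinger3}, both of which evaluate $\Mel(1,2) = \tfrac{1}{2}\mathbb{E}_{1,n-1}|\det|$ by integrating the GOE joint eigenvalue density against $|\lambda_1\cdots\lambda_{n-1}|$. Proposition~7 handles even matrix size while Corollary~3 handles odd, and since $n-1$ has opposite parity to $n$ the split stated in the lemma comes out naturally. In the odd-dimensional case the required integral reduction produces the alternating sum $S_m := \sum_{k=0}^{m-1}(-1)^k \Gamma(k+3/2)/k!$, which accounts for the second piece of the odd-case formula; thus the closed-form part of the lemma is a matter of repackaging these two results in our notation.

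For the asymptotic, the even case is an equality already built into the formula. In the odd case $n = 2m+1$ write the bracket as $T_1 + T_2$, with $T_1$ the boundary term $(-1)^m(n-1)!/(m!\,2^{n-1})$ and $T_2$ the one containing $S_m$. Stirling's formula applied to $(2m)!/(m!\,4^m) = \binom{2m}{m}\,m!/4^m$, together with the standard expansion $\binom{2m}{m}/4^m = (\pi m)^{-1/2}(1+O(1/m))$, yields the prefactor asymptotic
\[
\frac{(n-1)!}{m!\,2^{n-1}} = \frac{\Gamma\!\left(\tfrac{n+1}{2}\right)}{\sqrt{\pi m}}\bigl(1+O(1/m)\bigr).
\]
Hence $T_1 = O\bigl(\Gamma((n+1)/2)/\sqrt{m}\bigr)$ is negligible against the target $\Gamma((n+1)/2)$, and the problem reduces to showing $S_m = (-1)^{m-1}\sqrt{m}/2 + O(1)$, at which point $T_2$ will supply exactly the desired leading constant.

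The delicate step is therefore the asymptotic for $S_m$. Using the duplication identity $\Gamma(k+3/2)/k! = \tfrac{\sqrt{\pi}(2k+1)\binom{2k}{k}}{2\cdot 4^k}$ and expanding $\binom{2k}{k}/4^k$ one obtains $\Gamma(k+3/2)/k! = \sqrt{k} + 3/(8\sqrt{k}) + O(k^{-3/2})$. Pairing consecutive summands cancels the $3/(8\sqrt{k})$ corrections between neighbours and gives $a_{2j}-a_{2j+1} = -1/(2\sqrt{2j}) + O(j^{-3/2})$; summing via comparison with $\int dx/\sqrt{x}$ produces $S_m = (-1)^{m-1}\sqrt{m}/2 + O(1)$. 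Substituting into $T_2$ cancels both the sign $(-1)^{m-1}$ and the factor $\sqrt{m}$, producing $T_2 \sim \tfrac{2\sqrt{2}}{\pi}\Gamma((n+1)/2)$; the overall factor of $1/2$ in front of the bracket then yields the announced asymptotic. The main obstacle is precisely pinning down the constant $1/2$ in the leading behaviour of $S_m$, since any slack in the pairing bound would pollute the final leading constant of the lemma; an alternative route that avoids tracking sub-leading terms by hand is the integral representation $S_m = \int_0^\infty \sqrt{t}\,e^{-t}\sum_{k=0}^{m-1}(-t)^k/k!\,dt$, followed by a saddle-point analysis of the truncated exponential near $t\approx m$.
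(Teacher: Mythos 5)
Your proof is correct and follows essentially the same route as the paper: the exact formulas are taken from \cite{GayetWelschinger3}, the even case is trivial, and the odd case is handled by pairing the alternating terms and applying the standard ratio-of-Gamma asymptotic together with an integral comparison to get $S_m = (-1)^{m-1}\sqrt{m}/2 + O(1)$. This is the same analysis the paper carries out (in slightly repackaged notation, with the pairing already built into its version of $S_m$) in the proof of Corollary~\ref{cor:volume}.
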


As for the other two cases we have the following lemmas.

\begin{lemma}\label{b2}
\begin{equation*}
\Mel(2,3) = \left\{
\begin{array}{cc}
\frac{1}{\pi}\Gamma\left(\frac{n+1}{2}\right)^2 & \textrm{for even $n$},\\
\frac{n}{2\pi}\Gamma\left(\frac{n}{2}\right)^2 & \textrm{for odd $n=2m+1$}\\
\end{array} \right.
\end{equation*}
Moreover as $n$ goes to infinity (regardless its parity): $$\Mel(2,3)\sim \frac{1}{\pi}\Gamma\left(\frac{n+1}{2}\right)^2.$$
\end{lemma}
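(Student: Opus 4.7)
The plan is to compute $\Mel(2,3)=\tfrac{1}{2}\,\EE_{G_{2,n-1}}|\det|^2$ directly from the joint eigenvalue density. Writing this expectation out explicitly gives
\[
\Mel(2,3) = \frac{C_2(n-1)}{2}\int_{\RR^{n-1}}\prod_{j=1}^{n-1}\lambda_j^2\,e^{-\sum_j \lambda_j^2}\prod_{j<k}(\lambda_j-\lambda_k)^2\,d\lambda,
\]
and the crucial feature of the $\beta=2$ case is that the Vandermonde appears squared, which makes Andr\'eief's identity available.

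First I would absorb the factor $\prod_j \lambda_j^2\,e^{-\lambda_j^2}$ into the weight and apply Andr\'eief, rewriting the integral as $(n-1)!\det[M_{ij}]_{0\le i,j\le n-2}$ with $M_{ij}=\int_{\RR}\lambda^{i+j+2}e^{-\lambda^2}\,d\lambda$. Since the Gaussian moment vanishes whenever $i+j$ is odd, the Hankel matrix $[M_{ij}]$ is block-diagonal after a parity reordering of rows and columns: the even-index block has entries $\Gamma(k+l+3/2)$ and the odd-index block has entries $\Gamma(k+l+5/2)$. Their sizes are $\lceil (n-1)/2\rceil$ and $\lfloor (n-1)/2\rfloor$, and it is precisely this asymmetry that produces the two cases (even vs.\ odd $n$) in the statement.

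Next I would evaluate each Hankel block via the classical identity
\[
\det[\Gamma(a+i+j)]_{i,j=0}^{p-1}=\prod_{j=0}^{p-1} j!\,\Gamma(a+j),
\]
which follows from running Andr\'eief in reverse against the Laguerre weight $x^{a-1}e^{-x}$ on $(0,\infty)$ together with the Selberg integral at $\gamma=1$. Multiplying the two blocks, inserting the explicit value of $C_2(n-1)$ from \eqref{densityconstant}, and collapsing the interlocked products of $\Gamma(j+3/2)$ and $\Gamma(j+5/2)$ via Legendre's duplication formula $\Gamma(z)\Gamma(z+\tfrac{1}{2}) = 2^{1-2z}\sqrt{\pi}\,\Gamma(2z)$ is the combinatorial heart of the argument. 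I expect this bookkeeping to be the main obstacle: one has to verify that all the rational-in-$n$ prefactors---the $(n-1)!$ from Andr\'eief, the power of $2$ and the products of factorials coming from $C_2(n-1)$, and the half-integer Gamma products from the two blocks---conspire to cancel, leaving exactly $\tfrac{1}{\pi}\Gamma(\tfrac{n+1}{2})^2$ for even $n$ and $\tfrac{n}{2\pi}\Gamma(\tfrac{n}{2})^2$ for odd $n$.

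For the large-$n$ asymptotic, Stirling's formula yields $\Gamma(\tfrac{n+1}{2})/\Gamma(\tfrac{n}{2})\sim (n/2)^{1/2}$, so that $\tfrac{n}{2\pi}\Gamma(\tfrac{n}{2})^2\sim \tfrac{1}{\pi}\Gamma(\tfrac{n+1}{2})^2$; this unifies the two parity cases into the common asymptotic claimed in the lemma.
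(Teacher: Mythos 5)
Your plan is sound and would indeed work, but note that it is a genuinely longer route than the one the paper takes: the paper simply quotes Mehta's closed-form product $\Mel(2,3)=\tfrac{1}{2}\prod_{j=1}^{n-1}\Gamma(\tfrac{3}{2}+\lfloor j/2\rfloor)/\Gamma(\tfrac{1}{2}+\lfloor j/2\rfloor)$ and then telescopes it using $\Gamma(z+1)=z\Gamma(z)$ and $\Gamma(m+\tfrac{1}{2})=\sqrt{\pi}\,(2m-1)!!/2^m$, whereas you propose to \emph{re-derive} that product from scratch via Andr\'eief and the parity-split Hankel determinant. Your Hankel block structure is correct (the even and odd blocks have entries $\Gamma(a+b+\tfrac{3}{2})$ and $\Gamma(a+b+\tfrac{5}{2})$, of sizes $\lceil (n-1)/2\rceil$ and $\lfloor (n-1)/2\rfloor$), and the Gram/Laguerre evaluation $\det[\Gamma(a+i+j)]_{i,j<p}=\prod_{j<p}j!\,\Gamma(a+j)$ is the right tool; after multiplying by $(n-1)!\,C_2(n-1)$ the $j!$'s and powers of $2$ do cancel and you are left with an alternating product of half-integer Gammas that telescopes to the stated expressions. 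What your approach buys is self-containedness — you don't have to import the Mellin-transform formula from \cite{Mehta} — but what it costs is precisely the bookkeeping you flag as the ``main obstacle'': you would essentially be re-proving Mehta's formula before starting the simplification the paper actually performs. Two small remarks: (i) the parity reordering permutes rows and columns by the \emph{same} permutation, so the determinant is unchanged with no sign to track; and (ii) the ``Legendre duplication'' step you invoke is the same content as the identity $\Gamma(m+\tfrac{1}{2})=\sqrt{\pi}\,(2m-1)!!/2^m$ that the paper uses, so there is no discrepancy there — it is just phrased differently. Since you explicitly defer the cancellation check, the write-up as stands is a plan rather than a complete proof, but there is no gap in the strategy itself.
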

\begin{proof}
We start by recalling the following formula from \cite{Mehta}:
$$\Mel(2,3)=\frac{1}{2}\prod_{j=1}^{n-1}\frac{\Gamma(3/2+\lfloor j/2\rfloor)}{\Gamma(1/2+\lfloor j/2\rfloor)}.$$
Using the identity $\Gamma(z+1)=z\Gamma(z)$ in the above formula with $z=1/2+\lfloor j/2\rfloor,$ we can rewrite it as:
\begin{align*}\Mel(2,3)&=\frac{1}{2}\prod_{j=1}^{n-1}\left(\frac{1}{2}+\left\lfloor\frac{j}{2}\right\rfloor\right)\\&=\frac{1}{2}\left(\prod_{\textrm{$1\leq j\leq n-1$, $j$ even}}\frac{j+1}{2}\right)\cdot\left(\prod_{\textrm{$1\leq j\leq n-1$, $j$ odd}}\frac{j}{2}\right)\\
&=\frac{1}{2^{n}}\left(\prod_{2\leq k\leq n,\textrm{ $k$ odd}}k\right)\cdot \left(\prod_{1\leq k\leq n,\textrm{ $k$ odd}}k\right)=\frac{c_n}{2^n}\prod_{1\leq k\leq n,\textrm{ $k$ odd}}k^2,\end{align*}
where $c_n=1$ for even $n$ and $n$ for odd ones. Thus if $n=2m$ is even, we have:
\begin{align*}\Mel(2,3)&=\frac{1}{2^n}(2m-1)!!^2\\
&=\frac{2^{2m-n}}{\pi}\Gamma(m+1/2)^2=\frac{1}{\pi}\Gamma\left(\frac{n+1}{2}\right)^2,
\end{align*}
where in the last line we have used the identity $\Gamma(m+1/2)=\sqrt{\pi}\frac{(2m-1)!!}{2^m}.$
In the case $n=2m+1$ is odd, recalling the value $c_{n\textrm{ odd}}=n$, have:
\begin{align*}\Mel(2,3)&=n\frac{1}{2^n}(1\cdot2\cdots(2m-1))^2=n\frac{\Gamma(m+1/2)^2}{2\pi}\\
&=\frac{n}{2\pi}\Gamma\left(\frac{n}{2}\right)^2.\end{align*}
The asymptotics are a simple application of Stirling's formula.

\end{proof}

\begin{lemma}\label{b4}
$$\Mel(4,5)=\frac{4^{-n+1}}{\pi} \Gamma \left( n+ \frac{1}{2} \right)^2 2 H_n(-1) ,$$
where $H_n$ is a hypergeometric function such that $2 H_n(-1) = 1 + o(1)$ as $n \ra \infty$. In particular as $n$ goes to infinity:
$$\Mel(4,5)\sim \frac{1}{4^{n-1}\pi}\Gamma\left(n+\frac{1}{2}\right)^2.$$
\end{lemma}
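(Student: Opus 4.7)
The plan is to follow the same outline as the proof of Lemma \ref{b2}. First, I would invoke the explicit product formula from Mehta for $\Mel(4,s)$, which (analogously to the $\beta=2$ case) expresses this Mellin transform as $\tfrac{1}{2}\prod_{j=1}^{n-1}\Gamma(\cdot)/\Gamma(\cdot)$ with Gamma-function ratios whose arguments depend on $\lfloor j/2\rfloor$. I would then specialize to $s=\beta+1=5$.

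Next, I would apply $\Gamma(z+1)=z\Gamma(z)$ repeatedly to simplify the product, splitting the factors according to the parity of $j$ as in the proof of Lemma \ref{b2}. The ``bulk'' of the product should telescope into a closed-form expression; pulling out the factor $4^{-n+1}/\pi$, which arises from identities such as $\Gamma(k+1/2)=\sqrt{\pi}(2k-1)!!/2^k$, leaves $\Gamma(n+1/2)^2$ multiplied by a residual finite sum whose terms alternate in sign. I would then recognize this residual as a terminating hypergeometric series evaluated at $-1$, \emph{defining} this series to be $2H_n(-1)$, with the factor of $2$ inserted to match the normalization stated in the lemma. Isolating exactly which ${}_pF_q$ arises is essentially a matter of pattern-matching Pochhammer symbols against the residual sum.

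Finally, I would establish the asymptotic $2H_n(-1)=1+o(1)$. Since the series is alternating and the ratio between consecutive terms tends to zero at rate $O(1/n)$ (a feature inherited from the fact that $n$ enters the Pochhammer data), the leading term dominates and the tail is bounded by a geometric series with vanishing common ratio. The asymptotic $\Mel(4,5)\sim \Gamma(n+1/2)^2/(4^{n-1}\pi)$ then follows by substitution. The principal obstacle will be precisely this asymptotic analysis of $H_n(-1)$: because $n$ appears in the parameters of the hypergeometric rather than only in its argument, standard large-argument or confluent asymptotics cannot be invoked directly. If the crude alternating-series bound proves insufficient, I would fall back on an Euler- or Kummer-type integral representation of $H_n(-1)$ and extract the leading behavior via Laplace's method around the appropriate endpoint.
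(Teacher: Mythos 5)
Two substantive gaps here.

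First, the starting point is not what you assume. For $\beta=2$ Mehta's formula for the Mellin transform is indeed a pure telescoping product of Gamma ratios, but for $\beta=4$ it is not: equation (26.3.10) of Mehta gives
$$\Mel(4,5)=\frac{1}{2^{2n-1}}\prod_{j=0}^{n-2}\frac{\Gamma(j+5/2)}{\Gamma(j+3/2)}\,\sum_{k=0}^{n-1}\binom{n-1}{k}(1)_k(3/2)_{n-1-k},$$
a product \emph{times} a genuinely nontrivial finite sum. Your outline starts from ``a $\tfrac12\prod\Gamma/\Gamma$ formula analogous to Lemma~\ref{b2},'' then says the telescoped product ``leaves \ldots a residual finite sum.'' That cannot happen: telescoping a product of Gamma ratios produces another closed-form product, not a sum. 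The sum has to be there from the beginning, and one factor of $\Gamma(n+\tfrac12)$ comes from the product while the other comes from recognizing the sum as $\tfrac{2}{\pi}\Gamma(n+\tfrac12)\cdot{}_2F_1\bigl(1,1-n;\tfrac12-n;-1\bigr)$ (which is precisely $\tfrac{2}{\pi}\Gamma(n+\tfrac12)H_n(-1)$). Without spotting the correct structure of Mehta's $\beta=4$ formula, the derivation doesn't get off the ground.

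Second, and more seriously, your asymptotic argument for $2H_n(-1)\to1$ rests on a false premise. You claim the ratio of consecutive terms in the series for $H_n(-1)$ tends to zero at rate $O(1/n)$ ``because $n$ enters the Pochhammer data.'' But for $H_n(-1)={}_2F_1\bigl(1,1-n;\tfrac12-n;-1\bigr)=\sum_{k=0}^{n-1}\frac{(1-n)_k}{(\tfrac12-n)_k}(-1)^k$, the consecutive ratio is $-\frac{k-(n-1)}{k-(n-\tfrac12)}$, which for $k$ small compared to $n$ tends to $-1$, not to $0$. The series as written at argument $-1$ has terms of comparable size, so neither an alternating-series bound nor a geometric-tail bound applies. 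The paper first applies the Pfaff transformation to pass from argument $-1$ to argument $\tfrac12$, giving $2H_n(-1)={}_2F_1\bigl(1,-\tfrac12;-n-\tfrac12;\tfrac12\bigr)$; \emph{that} series has the $O(1/n)$ decay you want (the $k\ge1$ terms carry an explicit factor $\frac{1}{4(n+\tfrac12)}$), and a crude tail bound then shows the sum is $1+o(1)$. Your fallback to an Euler/Kummer integral and Laplace's method could in principle work, but you'd still need to know you're analyzing the specific ${}_2F_1\bigl(1,1-n;\tfrac12-n;\cdot\bigr)$ that actually appears, which brings you back to the first gap.
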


\begin{proof}
We start by recalling equation (26.3.10) from \cite{Mehta}:
\begin{align*}\Mel(4,5)&=\frac{1}{2^{2n-1}}\prod_{j=0}^{n-2}\frac{\Gamma(j+5/2)}{\Gamma(j+3/2)}\sum_{k=0}^{n-1}{n-1\choose k}(1)_{k}(3/2)_{n-1-k}\\
&=\frac{1}{2^{2n-1}}\left(\prod_{j=0}^{n-2}\frac{\Gamma(j+5/2)}{\Gamma(j+3/2)}\right)\cdot \frac{2}{\pi}\Gamma \left( n+1/2 \right) \pFq{2}{1} \left( 1,1-n,\frac{1}{2}-n,-1 \right).\end{align*}
In the above line $_2F_1$ denotes the  hypergeometric function; let us set 
$$H_{n}(-1)= \pFq{2}{1} \left( 1,1-n,\frac{1}{2}-n,-1 \right).$$
With this notation we have:
\begin{align*}\Mel(4,5)&=\frac{2 H_{n}(-1)}{\sqrt{\pi}2^{2n-1}}\Gamma(n+1/2)\prod_{j=0}^{n-2}\frac{\Gamma(j+5/2)}{\Gamma(j+3/2)}\\
&=\frac{2 H_{n}(-1)}{\sqrt{\pi}2^{2n-1}} \Gamma(n+1/2)\prod_{j=0}^{n-2}(j+3/2),
\end{align*}
where again the last line we have used $\Gamma(z+1)=z\Gamma(z)$ for $z=j+3/2.$ Recalling also the identity $\prod_{j=0}^{n-2}(j+3/2)=\frac{2}{\pi}\Gamma(n+1/2),$ we finally get:
$$\Mel(4,5)=\frac{2 H_{n}(-1)}{\pi 4^{n-1}}\Gamma\left(n+\frac{1}{2}\right)^2.$$
It remains to prove the limit $2H_n(-1)\to 1$.
First we use the Pfaff transformation:
$$\pFq{2}{1}(a,b;c;z) = (1-z)^{-a} \cdot  \pFq{2}{1} \left( a,b-c;c;\frac{z}{z-1} \right). $$
In our case:
$$2 \cdot \pFq{2}{1} \left( 1,-n;-n-\frac{1}{2};-1 \right) = \pFq{2}{1} \left( 1,-\frac{1}{2};-n-\frac{1}{2};\frac{1}{2} \right).$$
Now we use the series definition in terms of the Pockhammer symbol:
$$\pFq{2}{1}\left( 1,-\frac{1}{2};-n-\frac{1}{2};\frac{1}{2} \right) = \sum_{k=0}^{\infty} \frac{(-1/2)_k}{(-n-1/2)_k} (1/2)^k.$$
We need to show that the right hand side $\rightarrow 1$. We have
\begin{align}\label{eq:hyp}
\sum_{k=0}^{\infty} \frac{(-1/2)_k}{(-n-1/2)_k} (1/2)^k &= 1 + \frac{1}{4(n+1/2)}\sum_{k=1}^{\infty} \frac{(1/2)_{k-1}}{(-n+1/2)_{k-1}} (1/2)^{k-1}.
\end{align}
We use the rough bound:
\begin{align*}
\left| \sum_{k=0}^{\infty} \frac{(1/2)_{k}}{(-n+1/2)_{k}} (1/2)^{k} \right| &\leq \sum_{k=0}^{n} (1/2)^{k} + \frac{1}{|(-n+1/2)_n|} \sum_{k=n+1}^{\infty} \frac{(1/2)_{k}}{(1/2)_{k-n}} (1/2)^{k}, \\
&\leq 2 + \frac{(1/2)^n}{|(-n+1/2)_n|} \sum_{k=n+1}^{\infty} \frac{k!}{(k-n)!} (1/2)^{k-n} \\
&= 2+ \frac{n!}{|(-n+1/2)_n|}(1/2)^n \frac{F^{(n)}(1/2)}{n!} , 
\end{align*}
where
$$F(z) = \frac{1}{1-z} .$$
We have
$$F^{(n)}(1/2)/n! = O(1) \cdot 2^n .$$
Applying this along with Stirling's approximation:
$$2+ \frac{n!}{|(-n+1/2)_n|}(1/2)^n \frac{F^{(n)}(1/2)}{n!} = 2 + \frac{n!}{|(-n+1/2)_n|} O(1) = o(n).$$
This shows that (\ref{eq:hyp}) equals $1 + \frac{1}{4(n+1/2)} o(n) = 1 + o(1)$, as desired.
\end{proof}
As a corollary we get the following asymptotic for the volume of $\Sigma_{\beta,n}$.
\begin{cor}\label{cor:volume}
For each $\beta=1,2,4$ we have:
$$\frac{|\Sigma_{\beta,n} |}{|S^{N_\beta - 2}|}\sim \frac{2}{\sqrt{\pi}}n^{\frac{1}{2}}.$$
\end{cor}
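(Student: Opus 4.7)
The proof will simply substitute the asymptotic expressions for $\Mel(\beta,\beta+1)$ from Lemmas \ref{b1}, \ref{b2}, \ref{b4} into the explicit formula from Theorem \ref{thm:golden}. Using the explicit constants displayed in the Remark following Theorem \ref{thm:golden}, we have
\begin{equation*}
\frac{|\Sigma_{\beta,n}|}{|S^{N_\beta-2}|} = 2n\,\beta^{\tfrac{n\beta-\beta+1}{2}}\,\frac{\Gamma(1+\beta/2)}{\Gamma(1+\beta n/2)}\,\Mel(\beta,\beta+1),
\end{equation*}
so the task reduces to an asymptotic analysis of a ratio of Gamma functions, once the asymptotic from the relevant lemma is plugged in.

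The plan is to treat the three cases in parallel. First I would handle $\beta=1$: the pre-factor simplifies trivially, and after substituting $\Mel(1,2)\sim \tfrac{\sqrt{2}}{\pi}\Gamma(\tfrac{n+1}{2})$ from Lemma \ref{b1}, the expression reduces to a constant times $n\cdot \Gamma((n+1)/2)/\Gamma(n/2+1)$. By Stirling's formula the ratio of Gammas is asymptotic to $\sqrt{2/n}$, so the whole expression behaves like $\tfrac{2}{\sqrt{\pi}}n^{1/2}$, as required.

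For $\beta=2$ and $\beta=4$, the relevant Mellin asymptotics involve $\Gamma(\tfrac{n+1}{2})^2$ and $\Gamma(n+\tfrac{1}{2})^2$ respectively, while the denominator $\Gamma(1+\beta n/2)$ equals $n!$ or $(2n)!$. The key tool will be the Legendre duplication formula
\begin{equation*}
\Gamma(2z)=\frac{2^{2z-1}}{\sqrt{\pi}}\Gamma(z)\Gamma(z+\tfrac{1}{2}),
\end{equation*}
applied with $z=(n+1)/2$ for $\beta=2$ and with $z=n+\tfrac{1}{2}$ for $\beta=4$. In each case, duplication cancels one factor of the Gamma from the numerator against the factorial in the denominator, together with the explicit powers of $\beta=2$ or $\beta=4$ appearing in the pre-factor. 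What remains is always $2n/\sqrt{\pi}$ times a single ratio $\Gamma(\text{half-integer})/\Gamma(\text{integer})$, which Stirling reduces to $n^{-1/2}$, producing the claimed asymptotic $\tfrac{2}{\sqrt{\pi}}n^{1/2}$.

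This is essentially a bookkeeping exercise once one has the right identities in hand; the main (minor) obstacle is ensuring that the various powers of $2$, $\sqrt{\pi}$, and $\beta$ cancel correctly across the three cases, so that the final constant $\tfrac{2}{\sqrt{\pi}}$ is universal. The fact that the three distinct expressions collapse to the same asymptotic is itself a small sanity check that the duplication formula has been applied correctly.
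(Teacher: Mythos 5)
Your proposal is correct and follows essentially the same route as the paper: substitute the Mellin-transform asymptotics from Lemmas \ref{b1}, \ref{b2}, \ref{b4} into the explicit formula from the Remark after Theorem \ref{thm:golden}, then reduce the resulting ratio of Gamma functions by Stirling. The paper states this step very tersely, while you spell out the bookkeeping (using the Legendre duplication formula for $\beta=2,4$), but the underlying argument is the same and your arithmetic checks out in all three cases.
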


\begin{remark}In our main application of this asymptotic (Thm. \ref{bettirandomtwo}) we will need, for $\beta=1$, a more precise error bound:
$$\frac{|\Sigma_{1,n} |}{|S^{N_1 - 2}|} = \frac{2}{\sqrt{\pi}}n^{\frac{1}{2}} + O(1). $$
\end{remark}
\begin{proof}Recall from Theorem \ref{thm:golden} (and the remark below it) that:
$$ \frac{|\Sigma_{\beta,n} |}{|S^{N_\beta - 2}|} =2n \beta^{\frac{n\beta -\beta +1}{2}}\frac{\Gamma(1+\beta/2)}{\Gamma(1+\beta n/2)}\Mel(\beta, \beta+1).$$
The result follows applying Stirling's approximation to 
 the asymptotic for $\Mel(\beta, \beta+1)$ given in Lemma \ref{b1}, \ref{b2}, \ref{b4}.
 
  The error bound stated in the Remark follows immediately from the error in Stirling's approximation for $n$ even.
  For $n = 2m+1$ odd, reading the proof of Lemma \ref{b1} which was given in \cite[Cor. 3]{GayetWelschinger3}, one can conclude that:
  \begin{equation}\label{eq:GayWel}
  \Mel(1,2) = \frac{(n-1)!}{m! 2^{n-1}} \left( (-1)^m + \frac{4\sqrt{2}}{\sqrt{\pi}} S_m \right),
  \end{equation}
  where
  $$S_m = \frac{1}{2}\sum_{j=0}^{m/2-1} \frac{\Gamma(2j + 2 - 1/2)}{ \Gamma(2j + 2)},$$
  for $m$ even, and
  $$S_m = \frac{\Gamma(m+1/2)}{\Gamma(m)} - \frac{1}{2}\sum_{j=0}^{(m-1)/2-1} \frac{\Gamma(2j + 2 - 1/2)}{ \Gamma(2j + 2)},$$
  for $m$ odd.
  Using the asymptotic \cite{Olver}
  $$\frac{\Gamma(z+a)}{\Gamma(z+b)} = z^{a-b}(1+ O(1/z)),$$
  along with an integral estimate for the sum we have (regardless of the parity of $m$):
  $$S_m = \frac{\sqrt{m}}{2} + O(m^{-1/2}).$$
  
  Applying this to (\ref{eq:GayWel}) gives:
  $$ \Mel(1,2) = \frac{(n-1)!}{m! 2^{n-1}} \left( (-1)^m + \frac{4\sqrt{2}}{\sqrt{\pi}} S_m \right) = \frac{\sqrt{2}}{\pi} \Gamma \left( \frac{n+1}{2} \right) \left( 1 + O(n^{-1/2}) \right).$$
  Using Stirling's approximation for
  $$ \frac{C_1(n)}{C_1(n-1)} = \frac{\Gamma(1+1/2)}{\sqrt{2\pi} \Gamma(1+n/2)}, $$
  and plugging this into the exact formula gives:
  $$\frac{|\Sigma_{1,n} |}{|S^{N_1 - 2}|} = \frac{2}{\sqrt{\pi}}n^{\frac{1}{2}}(1 + O(n^{-1/2})).$$

\end{proof}
The asymptotic of Theorem \ref{lemma:joint} follows again from Lemma \ref{b1}, Lemma \ref{b2} and Lemma \ref{b4}.
\begin{cor}\label{gapasymptotic}The following asymptotic holds for the derivative at zero of the gap probability:
$$f'_{\beta, n}(0)\sim -\frac{2\sqrt{2}}{\pi}n^{1/2}.$$
\end{cor}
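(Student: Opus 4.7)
The plan is to deduce the stated asymptotic from the exact formula of Theorem \ref{lemma:joint} combined with the Mellin transform evaluations of Lemmas \ref{b1}, \ref{b2}, \ref{b4}. There are two natural routes and I would outline both.

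The cleaner route exploits the fact that the asymptotic is essentially already encoded in Corollary \ref{cor:volume}. First I would rewrite Theorem \ref{thm:golden} in terms of $|S^{N_\beta-2}|$ using the sphere-volume ratio $|S^{N_\beta-1}|/|S^{N_\beta-2}| = \sqrt{\pi}\,\Gamma(\tfrac{N_\beta-1}{2})/\Gamma(\tfrac{N_\beta}{2})$. The Gamma-function factors telescope and yield
$$\frac{|\Sigma_{\beta,n}|}{|S^{N_\beta-2}|} \;=\; \sqrt{\tfrac{\pi}{2}}\,\bigl(-f'_{\beta,n}(0)\bigr).$$
Combining this identity with the asymptotic $|\Sigma_{\beta,n}|/|S^{N_\beta-2}|\sim \frac{2}{\sqrt{\pi}}\,n^{1/2}$ of Corollary \ref{cor:volume} immediately gives
$$-f'_{\beta,n}(0)\;\sim\; \sqrt{\tfrac{2}{\pi}}\cdot \frac{2}{\sqrt{\pi}}\,n^{1/2}\;=\;\frac{2\sqrt{2}}{\pi}\,n^{1/2},$$
as desired. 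Since Corollary \ref{cor:volume} is established from Theorem \ref{thm:golden} together with Lemmas \ref{b1}, \ref{b2}, \ref{b4}, this argument is self-contained and uses no circular reasoning.

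Alternatively, one can proceed by direct substitution. Using \eqref{eq:mellin}, I would rewrite $\EE_{Q\in G_{\beta,n-1}}|\det(Q)|^\beta = 2\,\Mel(\beta,\beta+1)$ so that Theorem \ref{lemma:joint} becomes
$$f'_{\beta,n}(0) \;=\; -4n\,\frac{C_\beta(n)}{C_\beta(n-1)}\,\Mel(\beta,\beta+1).$$
Inserting the explicit ratio $\tfrac{C_\beta(n)}{C_\beta(n-1)} = \tfrac{\beta^{(n-1)\beta/2+1/2}\Gamma(1+\beta/2)}{\sqrt{2\pi}\,\Gamma(1+\beta n/2)}$ given in the Remark after Theorem \ref{thm:golden}, and substituting the closed-form asymptotic for $\Mel(\beta,\beta+1)$ from Lemma \ref{b1}, \ref{b2} or \ref{b4} according to the value of $\beta$, reduces the claim to an application of Stirling's approximation to a ratio of Gamma functions.

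The main obstacle in the direct route is purely combinatorial bookkeeping: one must verify that the $\beta$-dependent factors $\beta^{(n-1)\beta/2+1/2}$ combine with the several Gamma quotients (which are of rather different shapes in the three lemmas, involving $\Gamma(\tfrac{n+1}{2})$, $\Gamma(\tfrac{n+1}{2})^2$, and $4^{-n+1}\Gamma(n+\tfrac{1}{2})^2$ respectively) so as to produce a leading constant $\frac{2\sqrt{2}}{\pi}$ that is \emph{independent} of $\beta$. This universal cancellation is precisely the phenomenon already responsible for the $\beta$-free limit in Corollary \ref{cor:volume}, which is why I regard the first, conceptual route as the cleaner presentation.
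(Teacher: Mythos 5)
Your proposal is correct, and your preferred first route is a clean and efficient presentation. The paper itself dispenses with the proof in a single line, remarking that the asymptotic ``follows again from Lemma \ref{b1}, Lemma \ref{b2} and Lemma \ref{b4}'' — i.e.\ it points at your second route, a direct substitution of the Mellin-transform asymptotics into the exact formula of Theorem \ref{lemma:joint} followed by Stirling's formula. Your first route repackages the same chain of ingredients more economically: you verify the exact, $n$-independent identity
$$\frac{|\Sigma_{\beta,n}|}{|S^{N_\beta-2}|} \;=\; \sqrt{\tfrac{\pi}{2}}\,\bigl(-f'_{\beta,n}(0)\bigr),$$
which follows from Theorem \ref{thm:golden} together with the ratio $|S^{N_\beta-1}|/|S^{N_\beta-2}|=\sqrt{\pi}\,\Gamma(\tfrac{N_\beta-1}{2})/\Gamma(\tfrac{N_\beta}{2})$ (this identity is the same one underlying the remark ``$\tfrac{2}{\sqrt{\pi}}n^{1/2}\sim -\sqrt{\pi/2}\,f'_{1,n}(0)$'' in the abstract), and then imports the already-established asymptotic of Corollary \ref{cor:volume}. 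Since Corollary \ref{cor:volume} is itself obtained from Lemmas \ref{b1}, \ref{b2}, \ref{b4} via Theorem \ref{thm:golden}, there is indeed no circularity: you are only commuting the order in which the constant factors are cancelled. What the first route buys is that the $\beta$-independent cancellation (which you rightly flag as the delicate point of the direct route) is performed once, inside the proof of Corollary \ref{cor:volume}, and need not be re-examined. Both arguments are sound.
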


\subsection{Zeros of the determinant of a symmetric matrix polynomial}

Inspired by \cite[Thm. 6.1]{EdelmanKostlan95}, the next result counts the average number of zeros of the determinant of a \emph{matrix polynomial}
$$A_0 + t A_1 + t^2 A_2 + ... + t^k A_k,$$
with $A_i$ random matrices from the $\beta$-ensemble (more generally, we consider a combination of differentiable functions multiplied by random matrices in the $\beta$-ensemble).

\begin{cor}
Let $f_0(t),f_1(t),..,f_k(t)$ be any collection of differentiable functions and $A_0,A_1,..,A_k$ random matrices in the $\beta$-ensemble with $\beta=1,2,4$.
Let
$$\EE | \{ \det( A_0 f_0(t) + A_1 f_1(t) + .. + A_k f_k(t) ) = 0 \} | = \alpha_n .$$
Then we have
$$\frac{\alpha_n}{\alpha_1} \sim \frac{2}{\sqrt{\pi}} n^{1/2},$$
where $\alpha_1$ is $1 / \pi$ times the length of the curve $\hat{\gamma}(t) \subset S^k$ obtained from projecting the curve $ \gamma(t) = (f_0(t),f_1(t),..,f_k(t))$
from $\RR^{k+1}$ to $S^k$.
\end{cor}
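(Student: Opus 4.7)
The plan is to reduce the counting problem to an intersection problem on the sphere $S^{N_\beta-1}$ and then invoke Corollary \ref{cor:volume}.

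First I would fix an orthonormal basis $e_1, \ldots, e_{N_\beta}$ of $G_{\beta, n}$ for the Frobenius inner product; writing $A_i = \sum_j a_{ij} e_j$, the coefficients $a_{ij}$ are i.i.d.\ centred Gaussians (with variance $1/\beta$). Assembling them into a matrix $M \in \RR^{N_\beta \times (k+1)}$ and setting $\gamma(t) = (f_0(t), \ldots, f_k(t))$, the matrix polynomial identifies, via the chosen basis, with $A(t) = M\gamma(t) \in \RR^{N_\beta}$. Normalising $\hat{A}(t) := A(t)/\|A(t)\|$, Theorem \ref{Eck-You} gives that the zeros of $\det A(t)$ are precisely the values of $t$ at which the random curve $\hat{A}(t) \in S^{N_\beta-1}$ crosses $\Sigma_{\beta,n}$, so that $\alpha_n = \EE \#(\hat A \cap \Sigma_{\beta,n})$.

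Next I would compute the expected length of $\hat A$. Decomposing $A'(t) = \lambda(t) A(t) + W(t)$ with $W(t)$ independent of $A(t)$ (standard Gaussian disintegration on $\RR^{N_\beta}$), rotational invariance together with the independence of $\|P^\perp_{A}W\|$ from $\|A\|$ yield, after an elementary computation with the $\chi$-distribution, the identity $\EE \|\hat A'(t)\| = \|\hat\gamma'(t)\|$. Integrating in $t$ gives $\EE L(\hat A) = L(\hat\gamma) = \pi \alpha_1$, the last equality being Edelman--Kostlan.

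I would then apply integral geometry on $S^{N_\beta-1}$ to the rotationally invariant random curve $\hat A$ against the fixed hypersurface $\Sigma_{\beta,n}$. Concretely, a Kac--Rice calculation applied to the signed distance from $\hat A(t)$ to $\Sigma_{\beta,n}$, using that $\hat A(t)$ is uniform on the sphere and that the conditional distribution of the tangent $\hat A'(t)$ is rotationally symmetric, yields the exact identity
$$\alpha_n = \frac{\EE L(\hat A)\cdot |\Sigma_{\beta,n}|}{\pi \,|S^{N_\beta-2}|} = \alpha_1 \cdot \frac{|\Sigma_{\beta,n}|}{|S^{N_\beta-2}|}.$$
The constant $1/(\pi |S^{N_\beta-2}|)$ emerges by combining $|S^{N-1}| = 2\pi^{N/2}/\Gamma(N/2)$ with the mean $\Gamma((N-1)/2)/(\sqrt\pi \Gamma(N/2))$ of the absolute value of a single coordinate on $S^{N-2}$. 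Plugging in the asymptotic $|\Sigma_{\beta,n}|/|S^{N_\beta-2}| \sim (2/\sqrt\pi)\, n^{1/2}$ from Corollary \ref{cor:volume} then yields the stated asymptotic.

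The main obstacle is the last step, the integral geometry identity. Although it is very much in the spirit of the integral geometry formula cited earlier in the paper (Howard's formula), one has to verify that it applies to the \emph{random} curve $\hat A$ rather than to a deterministic one randomly rotated, and to pin down the universal geometric factor. Thanks to the $O(N_\beta)$-invariance of the law of $\hat A$, this amounts either to conditioning on the shape of $\hat A$ and then applying Howard's formula to a uniform rotation of that shape, or, as sketched above, to running Kac--Rice directly and keeping track of how the expected normal component of $\hat A'$ combines with the density of $\hat A(t)$ near $\Sigma_{\beta,n}$.
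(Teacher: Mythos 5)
Your proof is correct and follows essentially the same route as the paper, which simply cites \cite{EdelmanKostlan95} and records the integral geometry identity $\alpha_n = \tfrac{1}{\pi}\,|\Sigma_{\beta,n}|\,|N|/|S^{N_\beta-2}|$ with $|N|=L(\hat\gamma)=\pi\alpha_1$, then appeals to Corollary \ref{cor:volume}. The two ingredients you supply but the paper leaves implicit --- the expected-length identity $\EE L(\hat A)=L(\hat\gamma)$ via the $\chi$-cancellation $\EE\|\chi_{N-1}\|\cdot\EE\|\chi_N\|^{-1}=1$, and the justification that Poincar\'e's formula applies to the $O(N_\beta)$-invariant random curve by conditioning on its shape --- are exactly the details delegated to the Edelman--Kostlan argument, and you have reconstructed them correctly.
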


\begin{proof}
 The proof is the same as in \cite{EdelmanKostlan95} and uses the integral geometry formula to compute:
  $$\alpha_n = \frac{1}{\pi} \frac{|\Sigma_{\beta,n} | |N|}{|S^{N_\beta - 2}|} , $$
 where $|N| = \pi \alpha_1$ is the length of $\hat{\gamma}$,
 and $|\Sigma_{\beta,n}|$ is the intrinsic volume of the the set of singular matrices $\Sigma_{\beta,n}$ in $G_{\beta,n}$.
 In particular: 
 $$\frac{\alpha_n}{\alpha_1} \sim \frac{2}{\sqrt{\pi}} n^{1/2} .$$
 Note that in \cite{EdelmanKostlan95}, instead of $\Sigma_{\beta,n}$, there appeared $M$, the set of singular $n \times n$ real matrices. 
 \end{proof}

\begin{remark} In the case that $f_j(t) = t^j$ as in the above example, $\alpha_1 \sim 2 \log k$.\end{remark}

\section{Intersections of random quadrics}\label{sec:quadrics}
We consider in this section the problem of computing the average Betti numbers of a random intersection of $k$ quadrics in $\RP^{n-1}$. This problem has already been addressed by the first author in \cite{Lerario2012}, where the case $k=1,2$ was studied. 

To make the setting more precise, we consider quadratic forms $q_1, \ldots, q_k$ on $\mathbb{R}^n$ (i.e. homogeneous polynomials of degree two in $n$ variables). Each $q_i$ defines a (possibly empty) quadric hypersurface in the projective space $\RP^{n-1}$ and we consider the set $X$ obtained by intersecting together these hypersurfaces:
$$X=\{[x]\in \RP^{n-1}\,|\, q_1(x)=\cdots=q_k(x)=0\}.$$

Notice that, once a scalar product on $\RR^n$ has been fixed, a quadratic form $q$ determines a unique symmetric matrix $Q$ given by the equation:
$$q(x)=\langle x, Q x\rangle\quad \textrm{for all $x\in \RR^n$}.$$

In the random setting, it is assumed that the quadratic forms $q_i$ are independent and \emph{Kostlan} distributed (see \cite{Lerario2012}); this is equivalent to require that:
$$Q_i\in \textrm{GOE}(n), \quad i=1, \ldots, k.$$

The first result we will prove is the following, which is valid for $i\in\{0, \ldots, \dim(X)\}$ sufficiently bounded away from $\dim(X)/2$ (the ``middle'' Betti numbers).
\begin{thm}\label{thm:bettiquadrics}
Let $X$ be a random intersection of $k$ quadrics in $\RP^{n-1}$. For every $0<\alpha<1$ and $M>0$, if $|i-\frac{n}{2}|\geq n^{\alpha}.$ then:
$$\EE b_i(X) =1+O(n^{-M}).$$
\end{thm}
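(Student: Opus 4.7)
The plan is to combine the Agrachev--Lerario spectral-sequence machinery for common zero loci of quadrics (developed in \cite{AgrachevLerario, Lerariotwo, Lerariocomplexity}) with eigenvalue concentration from Random Matrix Theory. To the tuple $(q_1,\ldots,q_k)$ one associates the positive inertia index function
$$\lambda:S^{k-1}\to\{0,1,\ldots,n\},\qquad \lambda(\omega)=\ii(\omega_1 Q_1+\cdots+\omega_k Q_k),$$
and the Agrachev--Lerario spectral sequence converges to $H_*(X;\mathbb{Z}_2)$ from data determined entirely by $\lambda$ (through the homology of the sub-level sets $\Omega^{\geq j}=\{\lambda\geq j\}\subset S^{k-1}$). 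The deterministic input I will need is the following: for $i\in[0,n-k-1]$, the spectral sequence forces $b_i(X)=1$ whenever the image of $\lambda$ is disjoint from a fixed window around $n-1-i$ (and, dually, around $i$); equivalently, on that event, the only degree-$i$ homology of $X$ comes from the ``trivial'' copy of $H_i(\RP^{n-k-1};\mathbb{Z}_2)=\mathbb{Z}_2$ inherited from the ambient projective space.

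The problem thus reduces to controlling the range of $\lambda$. By the orthogonal invariance of the Kostlan law and independence of $Q_1,\ldots,Q_k\in\textrm{GOE}(n)$, for each fixed $\omega\in S^{k-1}$ the matrix $\omega\cdot Q=\sum_j\omega_j Q_j$ is itself $\textrm{GOE}(n)$, so $\lambda(\omega)$ is pointwise distributed as the number of positive eigenvalues of a single $\textrm{GOE}(n)$ matrix. Dean--Majumdar-type large deviations for this index then yield
$$\PP\{|\lambda(\omega)-n/2|\geq t\}\leq C\exp(-c t^2),$$
which, for $t=n^{\alpha'}$ with any $0<\alpha'<\alpha$, decays faster than any power of $n^{-1}$.

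To upgrade this pointwise estimate to a uniform one over $\omega\in S^{k-1}$, observe that $\lambda$ is integer-valued and jumps only across a real-algebraic codimension-one stratification of $S^{k-1}$ (the zero set of $\det(\omega\cdot Q)$). Since $k$ is fixed, covering $S^{k-1}$ by a polynomial-sized $\varepsilon$-net and invoking a union bound --- together with a crude bound on the number of jumps of $\lambda$ between neighboring net points, obtained by counting real zeros of the univariate determinantal polynomial along the geodesic segment joining them --- yields
$$\PP\bigl\{\exists\,\omega\in S^{k-1}\ \text{with}\ |\lambda(\omega)-n/2|\geq n^{\alpha'}\bigr\}=O(n^{-M})\qquad\text{for every }M>0.$$

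On the complement of this bad event the deterministic step immediately gives $b_i(X)=1$ for every $i$ with $|i-n/2|\geq n^\alpha$ (for $n$ large such an $i$ lies in $[0,n-k-1]$ since $k$ is fixed, and the window around $n-1-i$ is disjoint from $[n/2-n^{\alpha'},n/2+n^{\alpha'}]$). On the bad event, the Milnor--Thom bound controls $b_i(X)$ by a fixed polynomial in $n$, so its contribution to $\EE b_i(X)$ is $O(n^{-M}\cdot\textrm{poly}(n))$, which is absorbed by adjusting $M$. I expect the main obstacle to be the deterministic step: one must verify that the window of values of $\lambda$ relevant to $b_i(X)$ really has width $O(1)$ (or at least $o(n^\alpha)$) around $n-1-i$, ruling out ``long-range'' differentials in the spectral sequence that could a priori couple far-apart values of $j$ to the degree $i$. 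This is the technical heart of the argument and will rely on a careful unpacking of the spectral sequence in \cite{Lerariocomplexity}.
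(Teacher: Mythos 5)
Your plan tracks the paper's architecture closely: the Agrachev--Lerario spectral sequence to reduce the computation to the index function $\ii(\omega Q)$, pointwise concentration of the GOE index, a net argument to upgrade pointwise to uniform control of $\mu=\max_\omega \ii(\omega Q)$, and a polynomial worst-case bound on $b(X)$ to absorb the bad event. The deterministic step you flag as the ``technical heart'' is indeed exactly what the paper's Theorem~\ref{bettiq} supplies: the knight-move condition~\eqref{knight} forces $b_i(X)=b_i(E)=1$ whenever $i<n-\mu-k-2$ (the $(n-1-i)$-th diagonal of $E$ then misses the block $S$), so there is no further unpacking needed once you quote that result.

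However, the net argument as you propose it has a genuine gap. You want to bound how much $\lambda(\omega)=\ii(\omega Q)$ can change between two neighboring net points $\omega$ and $\omega_j$ at distance $\e_n$, and you propose to do so ``by counting real zeros of the univariate determinantal polynomial along the geodesic segment joining them.'' The restriction of $\det(\omega Q)$ to a great circle is a degree-$n$ polynomial, so this gives a bound of $n$ zeros on the segment --- but a priori all $n$ of them could fall inside that tiny segment, which means the index can jump by as much as $n$ there. That is far too weak: you need the jump to be $\leq n^{\alpha}/2$, not $\leq n$. The paper closes this gap differently and essentially probabilistically: Weilandt--Hoffman implies that any eigenvalue whose sign flips between $\omega Q$ and $\omega_j Q$ must have $|\lambda_i(\omega_j Q)|<\|\omega Q-\omega_j Q\|\leq 4\e_n\sqrt{nk}$, so the index change is at most the number of eigenvalues of $\omega_j Q$ in a window of width $O(\e_n\sqrt n)$; one then applies the Erd\"os--Yau--Yin rigidity estimate (Theorem~\ref{thm:EYY}) a second time, at each net point, to show that this count exceeds $n^\alpha/2$ only with probability $O(n^{-M})$. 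Without some such quantitative eigenvalue-rigidity input, a purely algebraic degree count does not control the jump. (A secondary, lesser point: the concentration bound $\exp(-ct^2)$ you attribute to Dean--Majumdar is stated at the wrong scale --- Dean--Majumdar large deviations operate at the $O(n)$ scale, and the index fluctuations at scale $t$ behave like a Gaussian of variance $\asymp\log n$, so the correct bound is more like $\exp(-ct^2/\log n)$; this still works for $t=n^{\alpha'}$ but the imprecision should be fixed. The paper sidesteps this by using rigidity rather than a CLT/LDP for the index.)
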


Note that the convergence in the above range will be uniform. In particular we get the following corollary: since the error term in the above statement is of the order $O(n^{-M})$ \emph{for all} $M>0$, then an accumulation of at most $n$ such error terms has the same property.

\begin{cor}\label{cor:interval}
For every $M>0$ and every open set $J\subset [0,\frac{1}{2})\cup(1/2, 1]$ we have: $$\sum_{j\in nJ}\EE b_i(X)=|\mathbb{N} \cap n J|+O(n^{-M}).$$
\end{cor}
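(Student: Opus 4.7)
The plan is to deduce this corollary directly from Theorem \ref{thm:bettiquadrics} by summing the pointwise estimate over the integer indices in $nJ$. The key observation is that, interpreting the hypothesis $J \subset [0,\tfrac12)\cup(\tfrac12,1]$ as meaning that the closure of $J$ avoids $\tfrac12$, there exists $\delta>0$, independent of $n$, with $|x-\tfrac12|\geq\delta$ for every $x\in J$. Consequently, every $j\in\mathbb{N}\cap nJ$ satisfies $|j-n/2|\geq\delta n\geq n^{\alpha}$ for any fixed $\alpha\in(0,1)$ and all $n$ sufficiently large, which places each such $j$ within the scope of Theorem \ref{thm:bettiquadrics}.

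First I would fix $M>0$ and any $\alpha\in(0,1)$, then split the sum as
$$
\sum_{j\in\mathbb{N}\cap nJ}\EE b_j(X)\;=\;|\mathbb{N}\cap nJ|\;+\;\sum_{j\in\mathbb{N}\cap nJ}\bigl(\EE b_j(X)-1\bigr),
$$
so that it suffices to control the residual sum. Applying Theorem \ref{thm:bettiquadrics} with the exponent $M+1$ in place of $M$ yields a constant $C=C(k,M,\alpha,J)>0$ such that
$$
\bigl|\EE b_j(X)-1\bigr|\;\leq\;C\,n^{-(M+1)}\qquad \textrm{for every } j\in\mathbb{N}\cap nJ,
$$
the uniformity being precisely what the previous paragraph guarantees. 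Since $|\mathbb{N}\cap nJ|\leq n+1$, the triangle inequality gives
$$
\Bigl|\sum_{j\in\mathbb{N}\cap nJ}\bigl(\EE b_j(X)-1\bigr)\Bigr|\;\leq\;(n+1)\cdot C\,n^{-(M+1)}\;=\;O(n^{-M}),
$$
which is the claimed estimate. Because $M>0$ was arbitrary, this completes the argument.

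The only delicate point is the uniformity of the error term in Theorem \ref{thm:bettiquadrics}: the implicit constant in ``$O(n^{-M})$'' must be independent of $j$ as $j$ ranges over $nJ$. This is exactly the strength of the theorem as stated, which assumes the uniform tail condition $|i-n/2|\geq n^{\alpha}$ rather than a weaker pointwise ``for each fixed $i$'' hypothesis. Once uniformity is in hand, the remainder is bookkeeping; the role of the restriction $J\subset[0,\tfrac12)\cup(\tfrac12,1]$ (with closure avoiding $\tfrac12$) is simply to ensure that the linear gap $\delta n$ dominates the sublinear threshold $n^{\alpha}$, so that the whole index range $\mathbb{N}\cap nJ$ lies within the validity regime of Theorem \ref{thm:bettiquadrics}, and the loss of one factor of $n$ from the length of the sum is absorbed by trading $M$ for $M+1$.
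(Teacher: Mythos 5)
Your proof is correct and takes the same route as the paper: the paper's own argument is the one-sentence remark preceding the corollary, namely that since the error in Theorem \ref{thm:bettiquadrics} is uniform and of order $O(n^{-M})$ for all $M>0$, summing at most $n$ such terms preserves the rate (by trading $M$ for $M+1$). You have simply spelled out the same bookkeeping in full, and your explicit note that the hypothesis on $J$ should be read as the closure of $J$ avoiding $\tfrac12$ (so that the linear gap $\delta n$ eventually dominates $n^{\alpha}$) makes precise what the paper leaves implicit.
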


Next we prove the theorem on the intersection of two quadrics.
\begin{thm}\label{bettirandomtwo}For $k=2$ the following formula holds:

$$\mathbb{E}b(X) = n + \frac{2}{\sqrt{\pi}}n^{1/2} + O(n^c) \quad \textrm{for any $c>0$}.$$
\end{thm}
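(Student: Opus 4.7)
The plan is to refine the argument of \cite{Lerario2012} by plugging the sharpened volume asymptotic from Corollary \ref{cor:volume} (and the Remark following it) into the integral-geometric count of singular lines in the pencil $W=\mathrm{span}\{q_1,q_2\}$. The starting point is the spectral-sequence identity mentioned in the introduction, which for $k=2$ gives
$$b(X)-n \;=\; \nu(W) + \mathrm{error},$$
where $\nu(W)$ denotes the number of singular lines in $W$, i.e.\ the number of unordered pairs of antipodal points in $\Sigma_{1,n}\cap(W\cap S^{N_1-1})$. For a generic pencil the discriminantal curve on the sphere is a finite set of transverse intersections and the error is bounded almost surely by a universal constant (this is the key feature of $k=2$: the relevant part of the spectral sequence has only one nontrivial column, so the gap between Morse-theoretic bounds closes up to $O(1)$).

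Next I would apply integral geometry to compute $\EE\,\nu(W)$. Since $Q_1,Q_2$ are independent $\mathrm{GOE}(n)$, the plane $W$ is uniformly distributed in the Grassmannian $G(2,G_{1,n})$, so $W\cap S^{N_1-1}$ is a uniform random great circle in $S^{N_1-1}$. The Poincar\'e–Crofton formula yields
$$\EE\,\#\bigl((W\cap S^{N_1-1})\cap \Sigma_{1,n}\bigr) \;=\; 2\,\frac{|\Sigma_{1,n}|}{|S^{N_1-2}|},$$
and dividing by $2$ (antipodal pairs $\leftrightarrow$ lines) gives
$$\EE\,\nu(W)\;=\;\frac{|\Sigma_{1,n}|}{|S^{N_1-2}|}\;=\;\frac{2}{\sqrt{\pi}}\,n^{1/2}+O(1),$$
where the last equality is precisely the refined Remark after Corollary \ref{cor:volume}. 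This already produces the required leading correction.

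The main obstacle is controlling the expectation of the $\mathrm{error}$ term. The deterministic bound $b(X)-n\le \nu(W)$ from \cite{Lerariocomplexity} is almost sure, but the reverse direction demands that the differentials in the Agrachev spectral sequence be injective on the relevant page; equivalently, that the singular points of $\Sigma_W$ be nondegenerate critical points of the restriction of $\det$ to $W\cap S^{N_1-1}$ and that no two singular lines coincide in index. Geometrically, failure occurs only on a codimension-$\ge 1$ semialgebraic stratum in $(Q_1,Q_2)$-space; a small-ball / anti-concentration estimate (using that the Kostlan distribution has bounded density) shows this bad set has probability $O(n^{-M})$ for every $M>0$. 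Since on the good set the error is $O(1)$ and the worst-case a priori bound $b(X)\le 2n$ controls the contribution of the bad set, one gets $\EE|\mathrm{error}|=O(n^c)$ for every $c>0$. Combining,
$$\EE b(X)\;=\;n+\EE\,\nu(W)+O(n^c)\;=\;n+\frac{2}{\sqrt{\pi}}n^{1/2}+O(n^c),$$
as claimed. The most delicate estimate is the anti-concentration step ensuring the bad set has super-polynomially small probability, since here one needs precise control of the joint density of the discriminant and its derivative along the random great circle, beyond what a naive union bound would give.
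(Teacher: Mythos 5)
Your integral-geometric computation of $\EE\,\nu(W)=\frac{|\Sigma_{1,n}|}{|S^{N_1-2}|}=\frac{2}{\sqrt{\pi}}n^{1/2}+O(1)$ is correct and is exactly the paper's step; but the decomposition you start from is wrong, and the gap it hides is precisely the hard part of the actual proof. What the spectral sequence gives (Theorem~\ref{bettiq} together with Proposition~\ref{prop:bettitwo}) is
$$b(X)=b(E)+O(1)=3n-4\mu+\tfrac{1}{2}\mathrm{Card}(W\cap\Sigma_{1,n})+O(1),$$
equivalently
$$b(X)-n=\nu(W)-\bigl(4\mu-2n\bigr)+O(1),$$
where $\mu=\max_{\omega\in S^1}\mathrm{i}^+(\omega_1 Q_1+\omega_2 Q_2)$ is the maximum of the index function over the pencil. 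Your ``error'' term is therefore not $O(1)$ on any full-measure good set: it contains $4\mu-2n\ge 0$, which is a genuine random variable that fluctuates with $W$ and can be as large as $2n$. The inequality $b(X)-n\le\nu(W)$ is the easy direction; the reverse inequality fails by exactly $4\mu-2n$ (up to $O(1)$), and this discrepancy is not an artifact of degenerate configurations.

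Consequently your proposed control of the error is both misdirected and insufficient. You attribute the error to a codimension-$\ge1$ semialgebraic bad set in $(Q_1,Q_2)$-space and try to kill it with an anti-concentration estimate; but such a set has probability zero (not $O(n^{-M})$), and more importantly, on its complement $4\mu-2n$ is still typically much larger than $O(1)$. What is really needed is a concentration statement for $\mu$ around $n/2$: the paper proves $\EE(4\mu-2n)=O(n^\alpha)$ for every $\alpha>0$ (Proposition~\ref{propo:mu}), and this rests on Lemma~\ref{lemma:mu}, which combines a net argument over $S^1$ with the Erd\"os--Yau--Yin rigidity estimate for eigenvalues of Wigner matrices (Theorem~\ref{thm:EYY}). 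Without introducing $\mu$ and invoking a quantitative eigenvalue rigidity result of this kind, the argument cannot close: the main term $\nu(W)$ is of size $\Theta(n^{1/2})$, so one must know that the unaccounted-for $4\mu-2n$ is $o(n^{1/2})$ in expectation, and nothing in your proposal supplies that.
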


Before proving the theorems, we need a way for computing Betti numbers of $X$ in a \emph{deterministic} setting.

\subsection{Betti numbers of intersections of quadrics: a deterministic approach}The cohomology of intersections of real quadrics is studied in \cite{Agrachev, AgrachevLerario, Lerariotwo, Lerariocomplexity}. The main ingredient is a \emph{cohomology spectral sequence} converging to the homology of $X$; since we will only need part of this machinery, here we present a simplification of the theory adapted to our needs.

Given the quadratic forms $q_1, \ldots, q_k$ and the corresponding symmetric matrices $Q_1, \ldots, Q_k$, we consider their span:
$$W=\textrm{span}\{Q_1, \ldots, Q_k\}\subset \textrm{Sym}(n, \RR).$$
For every symmetric matrix $Q$ we denote by $\ii(Q)$ the number of its positive eigenvalues (usually called the \emph{index} of $Q$); we set
$$\mu=\max_{Q\in W}\ii(Q)\quad \textrm{and}\quad \nu=\min_{Q\in W\backslash \{0\}}\ii(Q).$$
Notice that for the generic choice of $q_1, \ldots, q_k$ the span $W$ is $k$-dimensional.

In order to study the topology of $X$ we will need to consider also, for every $j\geq 0$ the following subset of the sphere $S^{k-1}$:
$$\Omega^{j}=\{\omega \in S^{k-1}\,|\, \ii(\omega Q)\geq j\}$$
where the notation $\omega Q$ simply means $\omega_1Q_1+\cdots+\omega_kQ_k.$

The Betti numbers\footnote{Hereafter all homology and cohomology groups will be with $\mathbb{Z}_2$ coefficients.} of the sets $\Omega^1\supset\Omega^2\supset\cdots\supset\Omega^{n}$ together with $\mu$ are the ingredients we need and we collect them into a table $E=(e_{i,j})$ defined for $i=0, \ldots, k$ and $j=0, \ldots,n-1$ by:
$$e_{i,j}=b_i(W, \Omega^{j+1}).$$
The zero-th and the $k$-th columns look like (there are $\mu$ zeros in the first column and $n-\nu$ in the second one):
$$e_{0,*}=\begin{array}{|c|}
\hline
1\\
\hline
\vdots\\
\hline
1\\
\hline
0\\
\hline
\vdots\\
\hline
0\\
\hline
\end{array}\quad\quad
e_{k,*}=\begin{array}{|c|}
\hline
0\\
\hline
\vdots\\
\hline
0\\
\hline
1\\
\hline
\vdots\\
\hline
1\\
\hline
\end{array}
$$ 

For the first column: if $j\geq\mu$ then the set $\Omega^j$ is empty and $e_{0, j}=b_0(W, \Omega^{j+1})=1;$ on the other hand if $j\leq \mu-1$ then $\Omega^{j+1}\neq \emptyset$ and $e_{0,j}=b_0(W, \Omega^{j+1})=0.$ For the $(k+1)$-th column: if $j\leq \nu-1$ the $\Omega^{j+1}=S^k$ (\emph{every} point $\omega$ in $S^{k-1}$ has $\ii(\omega Q)\geq \nu$); hence $e_{k, j}=b_{k}(W, S^{k-1})=b_{k-1}(S^{k-1})=1.$ On the opposite if $j\geq \mu$ then there is at least a point on the sphere not in $\Omega^{j+1}$ (say a point where the index is minimum); in other words  $\Omega^{j+1}$ is a proper open subset of the sphere $S^{k-1}$ and $e_{k, j}=b_{k}(W, \Omega^j)=b_{k-1}(\Omega^j)=0$ (proper open subsets of $S^{k-1}$ do not have homology in dimension $k-1$). 

Thus the table $E$ can be viewed as a partitioned table (blank spots are zeros):
$$E=\begin{array}{|c|ccc|c|}
\hline
1&&&&0\\
\vdots&&&&\vdots\\
1&&&&0\\
\hline
0&&&&0\\
\vdots&&S &&\vdots\\
0&&&&0\\
\hline
0&&&&1\\
\vdots&&&&\vdots\\
0&&&&1\\
\hline
\end{array}
$$

and the table $S$ is the following:
$$
S=\begin{array}{|c|c|c|c|}
\hline

\hline
 b_{0}(\Omega^{\mu})-1&b_1(\Omega^{\mu})&\cdots&b_{k-2}(\Omega^\mu)\\
\hline
\vdots&\vdots&&\vdots\\

\hline
 b_{0}(\Omega^{\nu+1})-1&b_1(\Omega^{\nu+1})&\cdots&b_{k-2}(\Omega^{\nu+1})\\
\hline

\end{array}
$$
(the $-1$ appear only in the first column). 

Let us denote by $b(S)$ the sum of all the numbers in the table $S$ (and similarly by $b(E)$ the sum of all the numbers in the table $E$).
The following properties hold for $S$.
\begin{prop}\label{prop:bettitwo}
For the generic choice of $q_1, \ldots q_k$ we have $\nu=n-\mu$; moreover if $k=2$
$$b(S)=n-2\mu+\frac{1}{2}\emph{\textrm{Card}}(W\cap \Sigma_{2,n})$$
\end{prop}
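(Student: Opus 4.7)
The plan is to treat the two assertions in turn, both leveraging the antipodal symmetry $\ii(-Q) = n - \ii(Q)$, which is valid for non-degenerate real symmetric matrices.

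For the identity $\nu = n - \mu$: the function $\ii$ on $W \setminus \{0\}$ is locally constant on the open dense locus of non-degenerate matrices, so for generic $q_1, \ldots, q_k$ both the maximum $\mu$ and the minimum $\nu$ are attained at non-degenerate matrices $Q_{\mu}, Q_{\nu} \in W$. Negating then yields $\ii(-Q_{\mu}) = n - \mu$, so $\nu \leq n - \mu$, and $\ii(-Q_{\nu}) = n - \nu$, so $\mu \geq n - \nu$; hence $\mu + \nu = n$.

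For the formula with $k = 2$: the unit sphere of $W$ is a circle $S^1$, and the restriction $\det|_W$ is a homogeneous polynomial of degree $n$ in two variables. Its real zero locus in $W$ is a finite union of lines through the origin, cutting $S^1$ in $2L$ antipodally-paired points, where $L = \frac{1}{2}\textrm{Card}(W \cap \Sigma_{1,n})$ is the number of singular lines in $W$. For generic $q_1, q_2$ the plane $W$ meets the smooth codimension-one stratum of $\Sigma_{1,n}$ transversally and avoids its singular locus (codimension three in $\textrm{Sym}(n, \RR)$); consequently every matrix in $W^{\ast} := W \cap \Sigma_{1,n}$ has corank one, exactly one eigenvalue of $\omega Q$ crosses zero at each singular point, and the integer-valued function $\omega \mapsto \ii(\omega Q)$ is locally constant on $S^{1} \setminus W^{\ast}$ and jumps by exactly $\pm 1$ at each point of $W^{\ast}$. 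The $2L$ points thus partition $S^1$ into $2L$ arcs, each labelled by a constant value of $\ii$, with adjacent labels differing by one. For $\nu + 1 \leq j \leq \mu$ one has $\emptyset \neq \Omega^j \neq S^1$, so $\Omega^j$ is a disjoint union of closed arcs. Letting $u_j$ count the singular points at which, traversing $S^1$ counterclockwise, the index ascends from $j - 1$ to $j$, each connected component of $\Omega^j$ begins at exactly one such up-transition, so $b_0(\Omega^j) = u_j$. Each singular point is either an up- or down-transition, and the two global totals agree (the index returns to its initial value after one loop), giving $\sum_{j \geq 1} u_j = L$; moreover $u_j = 0$ for $j \leq \nu$ or $j > \mu$. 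Therefore
$$b(S) \;=\; \sum_{j = \nu + 1}^{\mu}\bigl(b_0(\Omega^j) - 1\bigr) \;=\; \sum_{j = \nu + 1}^{\mu} u_j - (\mu - \nu) \;=\; L - (2\mu - n),$$
which rearranges to the claimed identity $b(S) = n - 2\mu + \tfrac{1}{2}\textrm{Card}(W \cap \Sigma_{1,n})$.

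The main technical point is the genericity reduction: transversality of a generic $2$-plane to the stratification of $\Sigma_{1,n}$ by corank must be invoked to ensure that every crossing of $W$ with $\Sigma_{1,n}$ is a simple corank-one crossing and that the index jumps by exactly $\pm 1$ (rather than $0$) across each. Once this is in place, the remainder of the argument is purely combinatorial bookkeeping of up-transitions around the circle, as above.
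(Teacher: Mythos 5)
Your proof is correct and takes essentially the same approach as the paper: the antipodal-symmetry argument for $\nu = n - \mu$ is the paper's own, and your count of index up-transitions around $S^1$ amounts to the paper's bookkeeping via $b_0(\Omega^j) = \tfrac{1}{2}b_0(\partial\Omega^j)$ together with the identification of $\bigcup_j \partial\Omega^j$ with the singular set $W\cap\Sigma$, both arguments resting on the same genericity fact that each crossing has corank one and shifts the index by exactly one. A very minor slip: the $\Omega^j$ are unions of \emph{open}, not closed, arcs (at a corank-one crossing the index at the singular point equals the smaller of its two neighboring values, so that point is excluded from $\Omega^j$); this has no effect on the count of connected components.
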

\begin{proof}
For the generic choice of $q_1, \ldots, q_k$ the set $\Sigma_{2,n}\cap W$ is a proper algebraic set (it is by definition the set of points in $S^{k-1}\subset W$ where the determinant is zero) and each set $\Omega^j$ is an open set; thus the minimum and the maximum of the index are attained at points where the determinant doesn't vanish; call these points $Q_M$ and $Q_m$ respectively. Then, since $\det(Q_M)\neq 0 \neq \det(Q_m)$, we have:
$$n-\mu =n-\ii(Q_M)\leq n-\ii(-Q_m)=\nu\leq \ii(-Q_M)=n-\mu.$$
Let us move to the second part of the statement. In the case $k=2$, for the generic choice of $q_1, q_2$ we have $\nu=n-\mu$ and the table $S$ is given by one single column $$
S=\begin{array}{|c|}
\hline

\hline
 b_{0}(\Omega^{\mu})-1\\
\hline
\vdots\\

\hline
 b_{0}(\Omega^{n-\mu+1})-1\\
\hline

\end{array}
$$
Each set $\Omega^j$ is a disjoint union of open intervals of $S^{1}$ (it is a proper subset of the circle) and $b_0(\Omega^j)=\frac{1}{2}b_0(\partial \Omega^j).$ In particular:
$$b(S)=n-2\mu+\frac{1}{2}\sum_{j=n-\mu+1}^{\mu }b_0(\partial \Omega^j).$$
Consider now the sum $\sum_{j=n-\mu+1}^{\mu} b_0(\partial \Omega^j)$: for the generic choice of $q_1, q_2$, it equals the number of points $\omega$ on the circle $S^1$ where $\ii(\omega Q)$ changes its value. This happens exactly at the points where the determinant vanishes, i.e. at the points in $W\cap \Sigma_{2, n}$. In other words:
$$\bigcup_{j=n-\mu+1}^{\mu} \partial \Omega^j=W\cap \Sigma_{2,n}.$$

\end{proof}
Given the table $E$, for every $i\geq 0$ we define $b_i(E)$ by the sum of the elements on its $(n-1-i)$-th diagonal\footnote{The strange but standard indexing is due to Alexander-Pontryiagin duality.}, i.e.:
$$b_i(E)=e_{0,n-1-i}+e_{1, n-i}+\cdots + e_{n-i, 1}+e_{n-1-i, 0}.$$
The following theorem is a combination of  \cite[Thm. A]{AgrachevLerario} and \cite[Thm. 8]{Lerariotwo}. 
\begin{thm}\label{bettiq}The entries of $E$ are related to the Betti numbers of the common zero locus $X\subset \RP^{n-1}$ of the generic quadrics $q_1, \ldots, q_k$ by:
 $$b_i(X)\leq b_i(E)\quad \textrm{and}\quad\chi(X)=\sum_{i=0}^{n-1}(-1)^ib_i(E).$$
 Moreover assume for every \emph{nonzero} entry $e_{j, n-1-i-j}$ of the $(n-1-i)$-th diagonal we have:
 \begin{equation}\label{knight}\sum_{t>j+2}e_{t, n-t-i}=0\quad \textrm{and}\quad \sum_{t<j-2}e_{t, n-t-2}=0\end{equation}
 then:
 $$b_i(X)=b_i(E).$$
 In the case $k=2$ we also have:
 \begin{equation}\label{two}|b(X)-b(E)|\leq2.\end{equation}

\end{thm}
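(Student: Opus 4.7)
The plan is to establish Theorem \ref{bettiq} by invoking the Agrachev-Lerario cohomology spectral sequence for intersections of real quadrics and then reading off the three statements from its behavior. First I would recall the construction of a bundle $p \colon \mathcal{B} \to S^{k-1}$ whose fiber over $\omega$ is the projectivization of the positive eigenspace of $\omega Q = \omega_1 Q_1 + \cdots + \omega_k Q_k$, so that $\dim p^{-1}(\omega) = \ii(\omega Q) - 1$. A theorem of Agrachev identifies $\mathcal{B}$ up to homotopy with a space whose cohomology controls $H^*(X)$ after an Alexander-type duality in $\RP^{n-1}$. Filtering the base by the nested subsets $\Omega^j = \{\omega \in S^{k-1} : \ii(\omega Q) \geq j\}$ and pulling back via $p$ produces a spectral sequence whose second page entries are exactly $e_{i,j} = b_i(W, \Omega^{j+1})$, that is, the table $E$; the duality shift is what makes the relevant Betti numbers of $X$ appear along the $(n-1-i)$-th diagonal.

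Once this spectral sequence is in hand, the three claims are formal consequences. Since every differential $d_r$ can only decrease ranks in passing from $E_r$ to $E_{r+1}$, one obtains $b_i(X) = b_i(E_\infty) \leq b_i(E_2) = b_i(E)$. Since each $d_r$ preserves Euler characteristics, $\chi(X) = \sum_i (-1)^i b_i(E)$. For the equality statement, the hypothesis \eqref{knight} encodes the vanishing of exactly those bidegrees reachable by a higher differential of knight-move bidegree (the characteristic $(r, 1-r)$ of the $r$-th page) from or to any nonzero cell on the relevant diagonal; this vanishing forces every nonzero entry on that diagonal to survive to $E_\infty$, so no cancellation occurs and $b_i(X) = b_i(E)$. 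For the case $k=2$ the table $E$ occupies only three columns, so only the differential $d_2$ can be nontrivial, and a short accounting of the possible sources and targets shows that at most two entries can fail to survive, yielding \eqref{two}.

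The main obstacle is the bookkeeping behind the knight-move hypothesis: one must verify, in the precise grading conventions of the spectral sequence, that the higher differentials act with exactly the bidegree pattern encoded by \eqref{knight}, and that the $\mathbb{Z}_2$-coefficient filtration splits on Betti numbers so that $E_\infty$ really recovers $b_i(X)$ additively. This, together with the identification of the filtered space with $\mathcal{B}$, is the technical content of Theorem A of \cite{AgrachevLerario} and Theorem 8 of \cite{Lerariotwo}. I would simply combine these two references rather than redevelop the spectral sequence here: the first supplies the inequality, the Euler characteristic formula, and the knight-move equality, while the second supplies the sharp defect bound \eqref{two} for $k=2$.
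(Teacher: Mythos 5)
Your proposal takes essentially the same route as the paper: the paper does not prove Theorem~\ref{bettiq} but simply states that it is a combination of \cite[Thm.~A]{AgrachevLerario} and \cite[Thm.~8]{Lerariotwo}, and your sketch of the Agrachev--Lerario spectral sequence (rank-decrease across pages, Euler-characteristic invariance, the knight-move bidegree $(r,1-r)$ forcing survival, and the three-column restriction for $k=2$) is a faithful account of what those references supply. Since you too ultimately defer to those two references for the technical identification of $E$ with the second page and for the sharp defect bound \eqref{two}, the argument is correct and matches the paper's.
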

Let us explain property \eqref{knight} with a visual example. Let us pick an element $e_{i, j}$ (which belongs to the $(i+j)$-th diagonal and will give information on the $(n-1-i-j)$-th Betti number) and look at the left side of the diagonal below it and the right side of the one above it:

$$\begin{array}{ccccccc}
\cline{1-1}
\multicolumn{1}{|c|}{e_{i-3, j+2}}&&&&&&\\
\cline{1-4}
&\multicolumn{1}{|c|}{e_{i-2, j+1}}&\multicolumn{1}{|c|}{*}&\multicolumn{1}{|c|}{*}&&&\\
\cline{2-5}
&&\multicolumn{1}{|c|}{*}&\multicolumn{1}{|c|}{e_{i,j}}&\multicolumn{1}{|c|}{*}&&\\
\cline{3-6}
&&&\multicolumn{1}{|c|}{*}&\multicolumn{1}{|c|}{*}&\multicolumn{1}{|c|}{e_{i+2, j-1}}&\\
\cline{4-7}
&&&&&&\multicolumn{1}{|c|}{e_{i+3, j-2}}\\
\cline{7-7}
\end{array}
$$

If the sum of all these elements is zero we say that $e_{i,j}$ \emph{survives}. If every nonzero element in a diagonal survives, than the the sum of all of them equals the corresponding Betti number. In this example if every nonzero element in the diagonal containing $e_{i,j}$ survives, then $b_{n-1-i-j}(X)=b_{n-1-i-j}(E).$
\begin{remark}
The table $E$ we have introduced is the table of the ranks of the second term of a spectral sequence converging to the homology of $X$. A spectral sequence is nothing more than a sequence of groups.
 In this context ``converging'' to the homology of $X$ means that these ranks give an upper bound for the homology of $X$ and the fact that they come from a spectral sequence is what implies the two above properties; the reason for the appearance of the ``$2$'' in the above summation is precisely the fact that $E$ corresponds to the \emph{second} term of the spectral sequence (see \cite{BottTu} for more details). 
 \end{remark}
 
 \begin{example}[Empty intersection of two quadrics in $\RP^2$] Consider the quadratic forms:
 $$q_1(x)=x_0^2+x_1^2+x_2^2\quad \textrm{and}\quad q_2(x)=2x_0x_1+2x_2x_1+2x_0x_2+x_1^2 $$
 with corresponding symmetric matrices: 
 $$Q_1=\left(\begin{array}{ccc}1 &0& 0 \\0 & 1 & 0 \\0 & 0 & 1\end{array}\right)\quad \textrm{and}\quad Q_2=\left(\begin{array}{ccc}0 & 1 & 1 \\1 & 1 & 1 \\1 & 1 & 0\end{array}\right) $$
 In this case the index function over the circle $S^1\subset \textrm{span}\{Q_1, Q_2\}$ looks as in Figure \ref{ciao}.
 Consequently $\mu=3$ and the table $E$ consists of all zeros; this is confirmed by the fact that the common zero locus of $q_1$ and $q_2$ is empty ($q_1$ is positive definite).

  \begin{center}
  \begin{figure}
\scalebox{0.8} 
{
\begin{pspicture}(0,-3.1829102)(12.355,3.1829102)
\pscircle[linewidth=0.04,dimen=outer](3.135,-0.35548827){2.34}
\psline[linewidth=0.03cm,linestyle=dashed,dash=0.16cm 0.16cm](1.395,2.5445118)(4.775,-2.9154882)
\psline[linewidth=0.03cm,linestyle=dashed,dash=0.16cm 0.16cm](4.655,2.5645118)(1.615,-3.0154884)
\psline[linewidth=0.03cm,linestyle=dashed,dash=0.16cm 0.16cm](0.015,1.0645118)(6.175,-1.5354882)
\psdots[dotsize=0.12](1.955,1.6245117)
\psdots[dotsize=0.12](1.035,0.6245117)
\psdots[dotsize=0.12](4.175,1.7045118)
\psdots[dotsize=0.12](5.315,-1.1554883)
\psdots[dotsize=0.12](4.395,-2.2954884)
\psdots[dotsize=0.12](1.975,-2.3554883)
\psline[linewidth=0.04cm,arrowsize=0.05291667cm 2.0,arrowlength=1.4,arrowinset=0.4]{->}(3.135,-0.25548828)(3.175,3.0245118)
\psline[linewidth=0.04cm,arrowsize=0.05291667cm 2.0,arrowlength=1.4,arrowinset=0.4]{->}(3.115,-0.23548828)(6.435,-0.21548828)
\usefont{T1}{ptm}{m}{n}
\rput(3.7864552,2.9895117){$Q_1$}
\usefont{T1}{ptm}{m}{n}
\rput(6.986455,-0.11048828){$Q_2$}
\usefont{T1}{ptm}{m}{n}
\rput(2.646455,2.5295117){$3$}
\usefont{T1}{ptm}{m}{n}
\rput(1.0064551,1.5695118){$2$}
\usefont{T1}{ptm}{m}{n}
\rput(0.64645505,-1.5704883){$1$}
\usefont{T1}{ptm}{m}{n}
\rput(3.226455,-3.0104883){$0$}
\usefont{T1}{ptm}{m}{n}
\rput(5.386455,-2.1304884){$1$}
\usefont{T1}{ptm}{m}{n}
\rput(5.486455,1.2095118){$2$}
\psarc[linewidth=0.04](11.805,1.9745117){0.47}{299.57785}{238.44861}
\psarc[linewidth=0.04](11.805,-0.20548828){0.47}{345.25644}{170.83765}
\psarc[linewidth=0.04](11.865,-2.2054882){0.47}{25.709953}{152.35402}
\usefont{T1}{ptm}{m}{n}
\rput(10.356455,2.0295117){$\Omega^1=$}
\usefont{T1}{ptm}{m}{n}
\rput(10.356455,-0.07048828){$\Omega^2=$}
\usefont{T1}{ptm}{m}{n}
\rput(10.356455,-2.0304883){$\Omega^3=$}
\psdots[dotsize=0.12,fillstyle=solid,dotstyle=o](11.575,1.5645118)
\psdots[dotsize=0.12,fillstyle=solid,dotstyle=o](12.035,1.5645118)
\psdots[dotsize=0.12,fillstyle=solid,dotstyle=o](11.335,-0.13548829)
\psdots[dotsize=0.12,fillstyle=solid,dotstyle=o](12.255,-0.31548828)
\psdots[dotsize=0.12,fillstyle=solid,dotstyle=o](11.455,-1.9754883)
\psdots[dotsize=0.12,fillstyle=solid,dotstyle=o](12.275,-1.9754883)
\end{pspicture} 
}
\caption{The index function for the empty intersection of two quadrics.}
\label{ciao}
\end{figure}
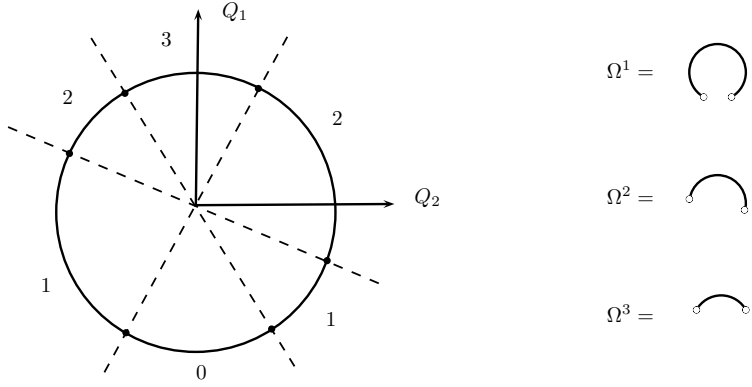
\end{center}

\end{example}
\subsection{The expectation of the maximum of the index}
Fix $k$, and let $W = \Span \{q_1,q_2,..,q_k \}$ be the span of $k$ (Kostlan distributed) random quadrics.
This translates to a span of random matrices $\{Q_1,Q_2,..,Q_k \}$ in GOE.
By homogeneity, we may assume the matrices have been rescaled by $\frac{1}{\sqrt{n}}$.
Let $\mu$ be as above the maximum of the index function $\ii(\omega Q)$ over $\omega \in S^{k-1}$ with $Q = (Q_1,Q_2,..,Q_k)$.

\begin{lemma}\label{lemma:mu}
For every $\alpha,M>0$
$$\PP \left\{ \mu \geq \frac{n}{2} + n^\alpha \right\} = O(n^{-M}),$$ 
for every $M>0$.
\end{lemma}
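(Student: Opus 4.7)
The plan is to combine a standard $\epsilon$-net argument on $S^{k-1}$ with sharp spectral concentration for a single GOE matrix. The key fiberwise observation is that for each fixed $\omega\in S^{k-1}$ the matrix $A(\omega):=\omega_1Q_1+\cdots+\omega_kQ_k$ is itself a (rescaled) GOE matrix: since the $Q_i$ are independent and $\omega$ is a unit vector, each entry of $A(\omega)$ is a Gaussian of the same variance as the corresponding entry of $Q_1$. This reduces every estimate I need to a fact about an individual GOE matrix.

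\emph{Step 1 (control at a net).} Fix parameters $\delta,\epsilon>0$ to be chosen and let $\mathcal{N}_\epsilon\subset S^{k-1}$ be an $\epsilon$-net of cardinality $O(\epsilon^{-(k-1)})$. For each $\omega_0\in\mathcal{N}_\epsilon$ I would invoke GOE eigenvalue rigidity together with the Costin--Lebowitz / Gustavsson-type fluctuation theorem for the index to obtain, with probability $1-O(n^{-M'})$ for arbitrary $M'$, the two bounds
$$|\ii(A(\omega_0))-n/2|<n^\alpha/3,\qquad \#\{i:|\lambda_i(A(\omega_0))|<\delta\}<n^\alpha/3.$$
Since the semicircle density at $0$ is a positive constant, the expected count in the second bound is of order $n\delta$, so the natural choice is $\delta=cn^{\alpha-1}$. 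A union bound over $\mathcal{N}_\epsilon$ (of polynomial size) preserves polynomially small failure probability.

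\emph{Step 2 (transfer to the whole sphere via Weyl).} Writing $A(\omega)-A(\omega_0)=\sum_i(\omega_i-(\omega_0)_i)Q_i$, Weyl's inequality yields
$$|\lambda_j(A(\omega))-\lambda_j(A(\omega_0))|\leq \sqrt{k}\,\|\omega-\omega_0\|\,\max_i\|Q_i\|_{\op}.$$
On the event $\max_i\|Q_i\|_{\op}\leq 3$, which holds off a set exponentially small in $n$ by the Tracy--Widom tail after the $1/\sqrt{n}$ rescaling, choosing $\epsilon\sim\delta/\sqrt{k}=c'n^{\alpha-1}$ forces all eigenvalues to move by less than $\delta$ between a point and its nearest net neighbor. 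Consequently $\ii(A(\omega))$ can exceed $\ii(A(\omega_0))$ by at most the number of eigenvalues of $A(\omega_0)$ in $(-\delta,\delta)$, which is already controlled in Step 1.

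Combining the two steps the total failure probability is $O(\epsilon^{-(k-1)})\cdot O(n^{-M'})=O(n^{(1-\alpha)(k-1)-M'})$, which becomes $O(n^{-M})$ upon taking $M'=M+(1-\alpha)(k-1)$; on the complementary good event, every $\omega\in S^{k-1}$ satisfies $\ii(A(\omega))<n/2+n^\alpha/3+n^\alpha/3<n/2+n^\alpha$, giving the claim. The main technical obstacle is calibrating Step 1 at the right scale: to extract a deviation as small as $n^\alpha$ for \emph{every} $\alpha>0$, naive Lipschitz smoothing of $\mathbbm{1}_{x>0}$ only delivers $n^{3/4}$-type bounds, so one genuinely needs the polylogarithmic rigidity/CLT estimates for the GOE spectrum.
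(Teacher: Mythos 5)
Your proof is correct and follows essentially the same architecture as the paper's: an $\epsilon$-net on $S^{k-1}$, a matrix perturbation inequality to control how the index can move between a point and its nearest net neighbor in terms of the number of small eigenvalues, and a strong concentration result for GOE eigenvalue counting to bound both the index deviation and the small-eigenvalue count at each net point, followed by a polynomial union bound. The paper uses the Wielandt--Hoffman inequality (Frobenius norm) with $\epsilon_n = n^{-2}$ where you use Weyl's inequality (operator norm) with $\epsilon \sim n^{\alpha-1}$; both are fine since the net stays polynomial. One small calibration point worth noting: the Costin--Lebowitz and Gustavsson results you name in Step 1 are central limit theorems for the counting function (fluctuations of size $\sqrt{\log n}$ in distribution), and do not by themselves deliver the super-polynomial tail bounds you need for arbitrary $M$; the genuine input is the eigenvalue rigidity theorem (the paper cites Erd\H{o}s--Yau--Yin), which gives $\PP\{|N_X(-\infty,b)-n\,m(-\infty,b)|\geq(\log n)^L\} \leq C\exp\{-c(\log n)^{\phi L}\}$. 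You do correctly flag this in your final paragraph, so this is a matter of phrasing rather than a gap.
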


\begin{proof}[Proof of Lemma]
Fix $\alpha>0$, and let $E_n$ denote the event $$E_n := \left\{ \mu \geq \frac{n}{2} + n^\alpha \right\} ,$$
and 
$$A_n:= \left\{ \|Q_i \| \leq 4\sqrt{n}, i=1,2,..,k \right\}.$$

Using the bound $\|Q_i \| \leq \sqrt{n} \| Q_i \|_{\op}$
along with the fact that $\PP \{ \| Q_i \|_{\op} \geq 4 \}$ is exponentially small \cite{Tao}[Ch. 2], we have:
\begin{equation}\label{eq:events}
\PP \left\{ E_n \right\} = \PP \left\{ E_n \cap A_n \right\} + O(c_1^n),
\end{equation}
for some $0 < c_1<1$.

Let $\N_n$ be a maximal $\e_n$-net of points on the sphere $S^{k-1}$ ($\e_n > 0$ is specified later),
i.e. $\N_n$ is a finite set of points $\omega_i \in S^{k-1}$ that are separated from each other by a distance of at least $\e_n$,
and $\N_n$ is maximal with respect to set inclusion.

\noindent {\bf Claim 1:} Let 
$$E_{n,j}:=\left \{ \ii(\omega_j Q) \geq \frac{n}{2} + n^\alpha/2 \right\}. $$ 
and 
$$ R_{n,j} :=  \{ \omega_j Q \text{ has } \geq n^\alpha/2 \text{ eigenvalues in } (-\e_n 4 \sqrt{nk}, \e_n 4 \sqrt{nk}) \} . $$
Then: 
$$E_n \cap A_n \subset \cup_{\omega_j \in \N_n} \{ E_{n,j} \cup R_{n,j} \}. $$

\noindent {\bf Claim 2:} For each $M>0$, we have: $$ \PP \{ E_{n,j} \} = O(n^{-M}).$$
If $\e_n < n^{-3/2}$ then we also have:
$$ \PP \{ R_{n,j} \} = O(n^{-M}).$$

Choosing, for instance, $\e_n = 1/n^2$ and applying both Claims we have
$$\PP \{ E_n \cup A_n \} < \sum_{j} ( \PP E_{n,j} + \PP R_{n,j} ) = O(n^{-M + 2k}),$$
since the number of terms in the sum is $|\N_n|=O(1/\e_n^k) = O(1/n^{2k})$. 
This proves the result in light of (\ref{eq:events}).

It remains to prove the Claims.

To see Claim 1, assume the events $E_n$ and $A_n$ occur and note that for some $\omega \in S^{k-1}$ we have $\mu = \ii(\omega Q)$ and
this $\omega$ is at most $\e_n$ away from one of the points $\omega_j \in \N_n$.
The change in index $|\ii(\omega Q) - \ii(\omega_j Q)|$ counts the number of changes in sign of the eigenvalues.
If the $i$th eigenvalue $\lambda_i$ changes sign, then $|\lambda_i(\omega Q) - \lambda_i(\omega_j Q )| > |\lambda_i(\omega_j Q )|$.
The Weilandt-Hoffman estimate \cite{Tao}[Ch. 1] states:
$$\sum_{i=1}^n |\lambda_i(\omega Q) - \lambda_i(\omega_j Q )|^2 < \| \omega Q - \omega_j Q \|^2.$$
In particular, for an eigenvalue that changes sign:
$$ |\lambda_i(\omega_j Q )| < |\lambda_i(\omega Q) - \lambda_i(\omega_j Q )| < \| \omega Q - \omega_j Q \| \leq \e_n 4 \sqrt{nk},$$
where the last inequality is implied by the event $A_n$.

This implies that the change in the index $|\ii(\omega Q) - \ii(\omega_j Q )|$
is at most the number of eigenvalues of $\omega_j Q$ with absolute value less than $\e_n 4\sqrt{nk}.$
Thus, either $$|\ii(\omega Q) - \ii(\omega_j Q )| \leq n^\alpha/2,$$
or $\omega_j Q $ has at least $n^\alpha/2$ eigenvalues in $ (-\e_n 4\sqrt{nk} ,\e_n 4\sqrt{nk} ). $
These two possibilities imply the events $E_{n,j}$ and $R_{n,j}$ respectively, so this proves Claim 1.

In order to prove Claim 2, we state a result providing a uniform convergence estimate for Wigner's semi-circle law.
We state just a special case of the result from \cite{EYY2012}:

\begin{thm}[L. Erd\"os, H-T. Yau, J. Yin, 2012]\label{thm:EYY}
Let $X$ be a (rescaled) random matrix in GOE(n), and assume $|b_n| < 5$.
There exist positive constants $A>1$, $C,c,$ and $\phi < 1$,
such that 
$$\PP \left\{ |N_X (-\infty,b_n) - n \cdot m (-\infty,b_n) | \geq (\log n)^L \right\} < C \exp \{ - c (\log n)^{\phi L} \},$$
where $ L = A \log \log n,$ $N_X (-\infty,b_n)$ counts the number of eigenvalues of $X$ in the interval $(-\infty,b_n)$,
and $m$ is the semi-circle measure.
\end{thm}
Let us first apply this to event $E_{n,j}$. We have:
$$ \PP \{ E_{n,j} \} =  \PP \{ | N_X (-\infty,0) - n/2 | \geq n^\alpha/2 \} < \PP \{ | N_X (-\infty,0) - n/2 | \geq (\log n)^L \},$$
for all large enough $n$.
So, $\PP \{ E_{n,j} \} < C \exp \{ - c (\log n)^{\phi L} \} $.
For any $M>0$, eventually (whatever the values of the constants):
$$ c (\log n)^{\phi L - 1} > M. $$
Thus, $$ C \exp \{ - c (\log n)^{\phi L} \} = O( \exp \{ - M (\log n) \}) = O(n^{-M}).$$

Next we apply the same theorem to the event $R_{n,j}$.
Let $b_n = \e_n 4 \sqrt{n k}$, and $X = \omega_j Q$ (which is a matrix in GOE). First, we have:
$$ R_{n,j} = \{ | N_X (-\infty,b_n) - N_X(-\infty,-b_n) | \geq n^\alpha/2 \}.$$
By the triangle inequality,
$$ | N_X (-\infty,b_n) - N_X(-\infty,-b_n) | \leq | N_X (-\infty,b_n) - m(-\infty,b_n) | + |N_X(-\infty,-b_n) - m(-\infty,-b_n)|.$$
Accordingly, $R_{n,j} \subset \hat{R}_{n,j} \cup \tilde{R}_{n,j}$, where
$$\hat{R}_{n,j} := \{ | N_X (-\infty,b_n) - m(-\infty,b_n) | \geq n^\alpha /4 \},$$
$$\tilde{R}_{n,j} := \{  |N_X(-\infty,-b_n) - m(-\infty,-b_n)| \geq n^\alpha /4 \} .$$
Applying Theorem \ref{thm:EYY} to $\PP \{ \hat{R}_{n,j} \}$ and $\PP  \{ \tilde{R}_{n,j} \}$, we have:
$$ \PP \{ R_{n,j} \}  \leq  \PP \{ \hat{R}_{n,j} \} +  \PP  \{ \tilde{R}_{n,j} \} = O(n^{-M}).$$

\end{proof}

As a corollary we derive the following proposition, which computes the expectation of $\mu.$

\begin{prop}\label{propo:mu}For every $k$-dimensional $W\subset \emph{\textrm{GOE}}(n)$ as above and every $0<\alpha<1$ we have:
$$\mathbb{E}\mu=\frac{n}{2}+O(n^\alpha)$$

\end{prop}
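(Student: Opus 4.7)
The plan is to sandwich $\EE\mu$ between $n/2$ and $n/2 + O(n^\alpha)$, with the lower bound coming from a deterministic symmetry argument and the upper bound from Lemma \ref{lemma:mu}, which has already done the genuine work.

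For the lower bound, I would observe that $\mu \geq n/2$ almost surely. With probability one the matrix $Q_1$ is non-singular, because singular symmetric matrices form a proper algebraic subset of $\textrm{Sym}(n,\RR)$ and therefore have Gaussian measure zero; on this event $\ii(Q_1) + \ii(-Q_1) = n$. Since both unit vectors $\pm e_1 \in S^{k-1}$ realize the matrices $\pm Q_1$ inside $W$, the maximum index satisfies
$$\mu \geq \max\{\ii(Q_1),\,\ii(-Q_1)\} \geq n/2.$$
Taking expectations yields $\EE\mu \geq n/2$.

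For the upper bound, I would split the expectation along the bad event $B_n := \{\mu \geq n/2 + n^\alpha\}$, using the deterministic bound $\mu \leq n$ on $B_n$ and $\mu \leq n/2 + n^\alpha$ on its complement:
$$\EE\mu \;\leq\; (n/2 + n^\alpha)\,\PP(B_n^c) + n\,\PP(B_n) \;\leq\; n/2 + n^\alpha + n\cdot\PP(B_n).$$
Applying Lemma \ref{lemma:mu} with any $M > 1$ (say $M = 2$) gives $n \cdot \PP(B_n) = O(n^{1-M}) = O(1)$, which is absorbed into $O(n^\alpha)$ for every $\alpha > 0$. Combining the two bounds produces $\EE\mu = n/2 + O(n^\alpha)$, as desired.

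There is no real obstacle to overcome at this stage: the delicate concentration estimate was already proved in Lemma \ref{lemma:mu} (via the Weilandt--Hoffman inequality, a net argument on $S^{k-1}$, and the Erd\H{o}s--Yau--Yin uniform version of Wigner's semicircle law), and the only additional ingredient here is the trivial $\omega \mapsto -\omega$ symmetry of the index function on the sphere.
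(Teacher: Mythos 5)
Your proposal is correct and takes essentially the same approach as the paper: lower bound from the deterministic inequality $\mu \geq n/2$, upper bound by splitting the expectation at the threshold $n/2 + n^\alpha$ and invoking Lemma \ref{lemma:mu}. The only difference is that you spell out the symmetry argument justifying $\mu \geq n/2$ (via $\ii(\omega Q) + \ii(-\omega Q) = n$ for non-singular $\omega Q$), which the paper leaves implicit.
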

\begin{proof}
From one hand we have $\mu\geq n/2$, which gives $\EE \mu\geq n/2.$ On the other hand:
\begin{align*}\EE\mu&=\sum_{j\geq0}\left(\frac{n}{2}+j\right)\PP\left\{\mu=\frac{n}{2}+j\right\}\\
&\leq \left(\frac{n}{2}+n^\alpha\right)\PP\left\{\mu\leq \frac{n}{2}+n^\alpha\right\}+n\PP\left\{\mu>\frac{n}{2}+n^{\alpha}\right\}\\
&\leq \frac{n}{2}+n^\alpha+O(n^{1-M})
\end{align*}
where in the last inequality we have used the assertion of Lemma \ref{lemma:mu}.
\end{proof}

\subsection{Proof of Theorem \ref{thm:bettiquadrics}} Fix $k$ and let $i=i(n)$ be as in the statement. Then as $n$ goes to infinity the table $E$ has a fixed number of columns ($k+1$) and the number of rows is increasing; moreover Betti numbers of $X$ can be studied (deterministically) using Theorem \ref{bettiq}.

Let us start by proving the following property:
\begin{equation}\label{property}b_i(X)=1\quad \textrm{for all $i<n-\mu-k-2$}\end{equation}
(in fact for the generic $X$, because of Poincar\'e duality the same will hold for $i>\mu+2$). 
Let us consider the table $E$  and focus on the diagonal above the one containing $e_{0, n-1-i}$:

$$E=\begin{array}{|cccccc}
\cline{1-1}
\multicolumn{1}{|c|}{1}&&&&&\\
\multicolumn{1}{|c|}{\vdots}&&&&&\\
\multicolumn{1}{|c|}{1}&&&&&\\

\cline{1-1}
\multicolumn{1}{|c|}{1}&&&&&\\
\cline{1-2}
\multicolumn{1}{|c|}{e_{0, n-1-i}}&\multicolumn{1}{|c|}{0}&&&&\\
\cline{1-3}
\multicolumn{1}{|c|}{1}&&\multicolumn{1}{|c|}{0}&&&\\
\cline{3-4}
\multicolumn{1}{|c|}{\vdots}&&&\multicolumn{1}{|c|}{0}&&\\
\cline{4-5}
\multicolumn{1}{|c|}{1}&&&&\multicolumn{1}{|c|}{0}&\\
\cline{1-1}\cline{5-6}
\multicolumn{1}{|c|}{e_{0, \mu+1}}&&&&&\multicolumn{1}{|c|}{0}\\
\cline{1-4}\cline{6-6}
\multicolumn{1}{|c|}{}&&&\multicolumn{1}{c|}{}&&\\
\multicolumn{1}{|c|}{}&&S &\multicolumn{1}{c|}{}&&\\
\multicolumn{1}{|c|}{}&&&\multicolumn{1}{c|}{}&&\\
\cline{2-5}
&&&&\multicolumn{1}{|c|}{1}&\\
&&&&\multicolumn{1}{|c|}{\vdots}&\\
&&&&\multicolumn{1}{|c|}{1}&\\
\cline{1-5}
\end{array}
$$
Since $n-\mu-i>k+2,$ then this diagonal consists of \emph{all} zeros (except $e_{0, n-i}$): in fact since the number of ones below $e_{0, n-1-i}$ is bigger than the number of columns of $S$, the diagonal $(e_{0, n-i}, e_{1, n-i-1}, \ldots, e_{n-i, 0})$ does not hit $S$, nor the ones in the last columns. This implies:
 $$\sum_{t>2}e_{t, n-t-i}=0\quad \textrm{and}\quad \sum_{t<-2}e_{t, n-t-2}=0$$
 (the second condition  is automatically satisfied because for negative indices $i,j$ the numbers $e_{i,j}$ are zeros). In particular for such an $i$ the second part of Theorem \ref{bettiq} implies:
 $$b_i(X)=b_i(E)=1.$$

 Let now $0< \alpha<1$ and $i=i(n)\leq \lfloor \frac{n}{2}-n^{\alpha}\rfloor$; recall that by Lemma \ref{lemma:mu} we have:
$$\PP\left\{\mu\leq \frac{n}{2}+n^\alpha\right\}\geq 1-O(n^{-M})\quad \textrm{for all $M>0$}.$$
Interpreting $b_i$ as a nonnegative function on $\textrm{Sym}(n, \RR)^k$ endowed with the probability distribution $d\gamma_k$ arising from $\textrm{GOE}(n)^k$, this enables to compute for $\alpha'<\alpha$:
\begin{align*}\EE b_i&=\int_{\textrm{Sym}(n, \RR)^k}b_i d\gamma_k=\int_{\{\mu>\frac{n}{2}+n^{\alpha'}\}}b_id\gamma_k+\int_{\{\mu\leq \frac{n}{2}+n^{\alpha'}\}}b_i d\gamma_k\\
&\geq \int_{\{\mu\leq \frac{n}{2}+n^{\alpha'}\}} d\gamma_k\geq 1+O(n^{-M'}).
\end{align*}
In the previous chain of inequalities we have used the fact that $\mu\leq \frac{n}{2}+n^{\alpha'}$ combined with $i=i(n)\leq \frac{n}{2}-n^\alpha$ gives (for large enough $n$):
$$n-\mu-k-2\geq n-\frac{n}{2}-n^{\alpha'}-k-2=\frac{n}{2}-n^{\alpha'}-k-2\geq \frac{n}{2}-n^{\alpha}\geq i(n)$$
which in turn implies, because of \eqref{property}, that  for each $j\leq i(n)$ on $\{\mu\leq \frac{n}{2}+n^{\alpha'}\}$ one has $b_j\equiv1$.

This proves that for every $0<\alpha<1$ and $M>0$ if $i=i(n)\leq \lfloor \frac{n}{2}-n^{\alpha}\rfloor$ we have $\EE b_i(X)\geq 1+O(n^{-M})$.

For the opposite inequality we need to know the following uniform bound from real algebraic geometry \cite{Lerariocomplexity} on the \emph{sum} of the Betti numbers of an intersection $X$ of $k$ quadrics in $\RP^n$:
\begin{equation}\label{barvinok}
b_i(X)\leq b(X)\leq O(n^{k-1}).
\end{equation}
Reasoning as above we obtain:
\begin{align*}\EE b_i&=\int_{\textrm{Sym}(n, \RR)^k}b_i d\gamma_k=\int_{\{\mu>\frac{n}{2}+n^\alpha\}}b_id\gamma_k+\int_{\{\mu\leq \frac{n}{2}+n^{\alpha}\}}b_i d\gamma_k\\
&\leq O(n)^{k-1}\PP\left\{\mu> \frac{n}{2}+n^\alpha\right\} +\PP\left\{\mu\leq \frac{n}{2}+n^\alpha\right\}\\
& \leq O(n^{-M+k-1})+1 \quad\textrm{for all $M>0$}.
\end{align*}
In particular we have proved that for every $0<\alpha<1$ and $M>0$ if we take $i< \lfloor \frac{n}{2}-n^\alpha\rfloor$ we have:
$$\EE b_i(X)=1+O(n^{-M}).$$
Poincar\'e duality implies that the same holds true for $i>\lfloor \frac{n}{2} +n^{\alpha}\rfloor.$
\subsection{Proof of Theorem \ref{bettirandomtwo}}The proof goes along the lines of the proof of Theorem 1 from \cite{Lerario2012}. In the case $k=2$ we can use the last part of the statement of Theorem \ref{bettiq} and write:
$$b(X)=b(E)+O(1).$$
In this case the first column of $E$ has $n-\mu$ ones and the last has $\nu$ many; but for the generic choice of $q_1, q_2$  Proposition \ref{prop:bettitwo} implies $\nu=n-\mu$; in particular:
\begin{equation}\label{1}b(E)=n-\mu+b(S)+n-\mu=2n-2\mu+b(S)=3n-4\mu+\frac{1}{2}\textrm{Card}(W\cap \Sigma_{2,n})\end{equation}
(here $W=\textrm{span}\{q_1, q_2\}$).
The expectation of $\mu$ is computed in Proposition \ref{propo:mu} and provides:
\begin{equation}\label{2}\EE 4\mu=2n+O(n^{\alpha})\quad \textrm{for all $\alpha>0$}.\end{equation}

It remains to compute $\frac{1}{2}\EE\textrm{Card}(W\cap \Sigma_{2,n})$. We start by noticing that by assumption for every $g\in SO(N)$ the random quadratic forms $q$ and $gq$ have the same distribution (here $N=\dim \textrm{Sym}(n, \RR)$ and the action is not by change of variable, but directly on the space of the coefficients). Thus we have:
$$\EE\textrm{Card}(W\cap \Sigma_{2,n})=\frac{\int_{SO(N)}\EE\textrm{Card}(gW\cap \Sigma_{2,n})dg}{|SO(N)|}=\frac{\mathbb{E}\int_{SO(N)}\textrm{Card}(gW\cap \Sigma_{2,n})dg}{|SO(N)|}=\frac{2|\Sigma_{2,n}|}{|S^{N-2}|}.$$
The first equality is because for every $g\in SO(N)$ we have $\mathbb{E}\textrm{Card}(gW\cap \Sigma_{2,n})=\mathbb{E}\textrm{Card}(W\cap \Sigma_{2,n})$; the second is just linearity of expectation, and the third one is the integral geometry formula \cite{EdelmanKostlan95} (there is no expected value because the integral is constant).

In particular, using Corollary \ref{cor:volume} and the remark immediately after it, we obtain:
\begin{equation}\label{3}\frac{1}{2}\EE\textrm{Card}(W\cap \Sigma_{2,n})=\frac{|\Sigma_{2,n}|}{|S^{N-2}|}=\frac{2}{\sqrt{\pi}}n^{1/2}+O(1).\end{equation}
Combining now \eqref{2} and \eqref{3} into the expectation of \eqref{1} we get the result.

\end{document}